\renewcommand{\L}{\mathbf{L}}
\newtheorem{Th}{Theorem}[section]
\newtheorem{Lemma}[Th]{Lemma}
\newtheorem{Cor}[Th]{Corollary}
\newtheorem{Prop}[Th]{Proposition}
\newtheorem{Ex}[Th]{Example}
\newtheorem{Remark}[Th]{Remark}
\newtheorem{Remarks}[Th]{Remarks}
\renewcommand{\P}{\mathbb{P}}
\newcommand{\calI}{\mathcal{I}}
\newcommand{\N}{\mathbb{N}}
\newcommand{\Z}{\mathbb{Z}}
\renewcommand{\k}{\mathbf{k}}
\newcommand{\tR}{\widetilde{\mathbf{R}}}
\newcommand{\W}[1]{\mathbf{W}_{#1}}
\newcommand{\e}[1]{\mathbf{e}_{#1}}
\renewcommand{\t}{\delta}
\renewcommand{\k}{\mathbf{k}}
\numberwithin{equation}{section}
\begin{document}

\title[Range of Random Walks on Free Products of Graphs]{Range of Random Walks on Free Products}

\author{Lorenz A. Gilch}


\address{Lorenz A. Gilch: University of Passau, Innstr. 33, 94032 Passau, Germany}

\email{Lorenz.Gilch@uni-passau.de}
\urladdr{http://www.math.tugraz.at/$\sim$gilch/}
\date{\today}
\subjclass[2000]{Primary: 60J10; Secondary: 20E06} 
\keywords{random walk, range, free product, central limit theorem, analyticity}

\maketitle

\begin{abstract}
In this article we consider transient random walks on free products of graphs. We prove that the asymptotic range of these random walks exists and is strictly positive. In particular, we show that the range varies real-analytically in terms of probability measures of constant support. Moreover, we prove a central limit theorem associated with the range of the random walk.

\end{abstract}

\section{Introduction}
\label{sec:introduction} 

Let $V_1,V_2$ be finite or countable sets with at least two elements and with distinguished elements $o_i\in V_i$ for $i\in \{1,2\}$, and suppose we are given transition matrices $P_1$ and $P_2$ on $V_1$ and $V_2$. The free product of the sets $V_1$ and $V_2$ is given by \mbox{$V:=V_1\ast V_2$,} the set of all finite words over the alphabet $(V_1\cup V_2)\setminus \{o_1,o_2\}$ of the form $x_1\dots x_n$ such that no two consecutive letters $x_j, x_{j+1}$ arise from the same $V_i$. 
Consider a transient Markov chain $(X_n)_{n\in\N_0}$ on $V$ starting at the empty word $o$, which arises from a convex combination of the transition matrices $P_i$ on the sets $V_i$. For sake of better visualisation, we may equip $V$ with a graph structure: there is an oriented edge from $x\in V$ to $y\in V$ if and only if the single step transition probability of walking from $x$ to $y$ is strictly positive.
For $n\in\N_0$, denote by $\mathbf{R}_n$ the number of different states which are visited up to time $n$, that is, $\mathbf{R}_n=\bigl|\{X_0,X_1,\dots,X_n\}\bigr|$. We are interested whether the sequence of random variables $\mathbf{R}_n/n$ converges almost surely to some constant and, if so, to calculate this constant and to study its behaviour when varying the parameters of the underlying random walk. If the limit exists, it is called the \textit{(asymptotic) range} of the random walk $(X_n)_{n\in\N_0}$. 
\par
It is well-known that the asymptotic range exists for random walks on groups, which are governed by probability measures on the group elements; this is a direct consequence of Kingman's subadditive ergodic theorem, see \cite{kingman}. In particular, there is a nice formula for the asymptotic range in the group setting:
\begin{equation}\label{equ:group-case-formula}
\lim_{n\to\infty} \frac{\mathbf{R}_n}{n} = 1- \P[\exists n\in \N: X_n=e],
\end{equation}
where $e$ is the group identity; see Guivarc'h \cite{guivarch}. In our case of general free products Kingman's subbadditive ergodic theorem is \textit{not} applicable, since we only have a partial composition law for the elements in $V$ and  since vertex transitivity is, in general, missing. Hence, existence of the asymptotic range is not guaranteed. Moreover, it turns out that the above formula does \textit{not} necessarily hold in the non-group setting. Since the asymptotic range is an important random walk characteristic number like the rate of escape or the asymptotic entropy, studying existence of the asymptotic range in the non-group case deserves its own right. We note that the range is a measure of how much of the underlying graph is explored by the random walk.
\par
Random walks on free products have been studied in large variety. Let me outline some results. Asymptotic behaviour of return probabilities of random walks on free products has been studied in many ways; amongst others, Gerl and Woess \cite{gerl-woess}, Woess \cite{woess3}, Sawyer \cite{sawyer78}, Cartwright and Soardi \cite{cartwright-soardi}, Lalley \cite{lalley93,lalley:04} and Candellero and G. \cite{candellero-gilch}. For free products of finite groups, Mairesse and Math\'eus \cite{mairesse1} computed an explicit formula for the drift and the asymptotic entropy. In G. \cite{gilch:07,gilch:11}  different formulas for the drift and also for the entropy of random walks on free products of graphs have been computed. Shi et al. \cite{sidoravicius:18} studied the spectral radius for random walks on some classes of free products of graphs. Finally, let me note that the importance of free products is due to Stallings' Splitting Theorem (see Stallings \cite{stallings:71}): a finitely generated group $\Gamma$ has more than one (geometric) end if and only if $\Gamma$ admits a non-trivial decomposition as a free product by amalgamation or an HNN extension over a finite subgroup. Both classes of groups are discussed and studied in detail, e.g., in Lyndon and Schupp \cite{lyndon-schupp}. Recall that free products are amalgams over the trivial subgroup. 
\par
While most of the articles mentioned above deal with random walks on free products of groups, which have a very homogeneous structure, this article goes a step beyond  general free products, which have a much less homogeneous structure.
\par
In the following we summarize the main results of this article. First, we are able to guarantee existence of the asymptotic range for random walks on free products under some mild assumptions:
\begin{Th}\label{thm:range-existence}
Assume that the radius of convergence of the Green function $G(o,o|z)$ defined in (\ref{equ:green-functions}) is strictly bigger than $1$. Then there exists some real constant $\mathfrak{r}>0$ such that
$$
\mathfrak{r}=\lim_{n\to\infty} \frac{\mathbf{R}_n}{n}  \quad \textrm{ almost surely}.
$$
\end{Th}
The second main result states a central limit theorem for the sequence $(\mathbf{R}_n)_{n\in\N_0}$, where we use some regeneration process which is  extracted from a careful analysis of the random walk's trajectory to ``infinity''. We denote by $(T_i)_{i\in\N}$ the associated random times at which that process regenerates; see (\ref{equ:def-Ti}) for the formal definition of these regeneration times.
\begin{Th}\label{thm:clt}
Additionally to the assumption in Theorem \ref{thm:range-existence}, assume that there are $x\in V_1$ and $n\in\mathbb{N}$ such that $\mathbb{P}[X_n=x\mid X_0=x]>0$. Then
the asymptotic range $\mathfrak{r}$ satisfies 
$$
\frac{\mathbf{R}_n- n\cdot \mathfrak{r} }{\sqrt{n}}\xrightarrow{\mathcal{D}} N(0,\sigma^2),
$$
where 
$\displaystyle \sigma^2=\frac{\mathbb{E}\bigl[\bigl(\mathbf{R}_{T_1}-\mathbf{R}_{T_0}-(T_1-T_0)\mathfrak{r}\bigr)^2\bigr]}{\mathbb{E}[T_1-T_0]}$.
\end{Th}
The assumption in the theorem above just ensures that the random walk does not visit a new element in each step, which would lead to $\mathbf{R_n}=n$ for all $n\in\mathbb{N}$; see Remarks \ref{remarks2.2}.(2).
\par
The third main result demonstrates that the range varies real-analytically in terms of transition matrices of constant support depending on finitely many parameters. For this purpose, we assume that all strictly positive single-step transition probabilities  $p(x,y)$, $x,y\in V$, satisfy $p(x,y)\in\{p_1,\dots,p_d\}\subset (0,1)$ for some $d\in\N$. Hence, for any $(p_1,\dots,p_d)\in(0,1)^d$ permitting a well-defined random walk on $V$, we study the mapping $(p_1,\dots,p_d)\mapsto \mathfrak{r}=\mathfrak{r}(p_1,\dots,p_d)$.
\begin{Th}\label{thm:analyticity}
Additionally to the assumption in Theorem \ref{thm:range-existence}, assume that the single step transition probabilities of the random walk $(X_n)_{ n\in\N_0}$ take only finitely many non-negative values $p_1,\dots,p_d\in (0,1), d\in\N$. Then the mapping
$$
(p_1,\dots,p_d) \mapsto \mathfrak{r}=\mathfrak{r}(p_1,\dots,p_d) 
$$
varies real-analytically.
\end{Th}
Analyticity means that, for any parameter vector $\underline{p}_0=(p_{0,1},\dots,p_{0,d})\in(0,1)^d$ permitting a well-defined random walk on $V$, we can expand $\mathfrak{r}(p_1,\dots,p_d)$ as a multivariate power series in the variables $p_1,\dots,p_d$ in a neighbourhood of $\underline{p}_0$.
The details are given in \mbox{Section \ref{sec:analyticity}.}
\par
The proofs involve, in a very crucial way, generating function techniques for free products. These techniques for rewriting probability generating functions on the free product in terms of functions on the single factors of the free product were introduced independently and simultaneously by \cite{cartwright-soardi},  \cite{woess3}, Voiculescu \cite{voiculescu} and McLaughlin \cite{mclaughlin}. In particular, in  \cite{gilch:07} heavy use of generating functions was made and that article will also serve as one of the main references.
\par
The plan of this paper is as follows: in Section \ref{sec:free-products} we give an introduction to free products of graphs and introduce a natural class of random walks on them. In Section \ref{sec:existence-range} we prove existence of the asymptotic range for random walks on free products of graphs (Theorem \ref{thm:range-existence}) by tracing the random walk's path to infinity with the help of exit times. In Section \ref{sec:clt} we derive the proposed Central Limit Theorem \ref{thm:clt}, while in Section \ref{sec:analyticity} we prove Theorem \ref{thm:analyticity}. Finally, in Section \ref{sec:remarks} we give some additional remarks.

\section{Free Products and Random Walks}
\label{sec:free-products}

We recall the definition of general free products, introduce a natural class of random walks on them, and provide several basic properties. In particular, we will introduce generating functions which will serve as a main tool for our analysis. We refer to \cite{gilch:07}, where essential prework has been done and will be used throughout this article.

\subsection{Free Products of Graphs}
\label{subsec:free-products}

Let $V_1,V_2$ be  finite or countable sets with $|V_i|\geq 2$ for $i\in\calI:=\{1,2\}$. For $i\in\calI$, we 
select a distinguished element $o_i$ of $V_i$ as the ``root'' of $V_i$. W.l.o.g. we assume that the sets $V_i$ are pairwise disjoint.
On each $V_i$ consider a (time-)homogeneous random walk with transition matrix $P_i=(p_i(x,y))_{x,y\in V_i}$.
The corresponding $n$-step transition probabilities are denoted by $p_i^{(n)}(x, y)$, where $x, y\in V_i$.  For better visualization, we think of graphs $\mathcal{X}_i$
with vertex sets $V_i$ and roots $o_i$ such that there is an oriented edge $x\to y$ if and only if $p_i(x, y) > 0$.
Furthermore, we shall assume that for every $i\in\calI$ and every $x\in V_i$ there is some $n_x\in\N$ such that $p_i^{(n_x)}(o_i,x) > 0$.  For sake of simplicity, we assume $p_i(x, x)= 0$ for every $i\in\calI$ and $x\in V_i$; for further remarks, see Section \ref{sec:remarks}. We shall also assume that there are $i\in\calI$, $x_1,x_2\in V_i$ and $m,n\in\N$ such that $p_i^{(m)}(x,y)>0$ and $p_i^{(n)}(y,x)>0$; we denote this assumption by (A) and refer to  Remarks \ref{remarks2.2}.(2) for further remarks.
\par
For $i\in\calI$, set $V_i^\times:= V_i\setminus\{o_i\}$ and $V^\times_\ast := V_1^\times \cup V_2^\times$. The \textit{free product of $V_1$ and $V_2$} is given by the set
\begin{equation}\label{equ:free-product}
V:=V_1\ast V_2 := \bigl\lbrace x_1x_2\dots x_n \,\bigl|\, n\in\N, x_j\in V^\times_\ast, x_j\in V_k^\times \Rightarrow x_{j+1}\notin V_k^\times\bigr\rbrace \cup\{o\},
\end{equation}
the set of words over the alphabet $V^\times_\ast$ such that no two consecutive letters come from the same $V_i^\times$,
where $o$ describes the empty word. We have a natural partial composition law on $V$: if $u=u_1\dots  u_m\in V$ and $v = v_1\dots v_n \in V$ with $u_m\in V_i^\times$, $i\in\calI$, and $v_1\notin V_i^\times$, then $uv\in V$ stands for their concatenation as words. In particular, we set $uo_i := u$ for all $i\in\calI$ and $o_iu := u$. Note that $V_i^\times\subseteq V$ and $o_i$ as a word in V is identified with $o$. Since concatenation of words is only partially defined, things are getting much more complicated than in the case of free products of groups (see, e.g., \cite{lyndon-schupp} for more details on free products of groups). Throughout this paper we will use the representation in (\ref{equ:free-product}) for elements in $V$.
The \textit{word length} of a word $u = u_1\dots u_m$ is defined as $\Vert u\Vert:= m$. Additionally, we set $\Vert o\Vert := 0$. The type $\t(u)$ of $u$ is defined to be $i\in\calI$ if $u_m\in V_i^\times$; we set $\t(o) := 0$. 
\par
The set $V$ can again be interpreted as the vertex set of a graph $\mathcal{X}$ which is constructed as follows: take copies of $\mathcal{X}_1$ and $\mathcal{X}_2$ and glue them together at their roots to one single common root, which becomes $o$; inductively, at each vertex $v=v_1\dots v_k$ with $v_k\in V_i$ attach a copy of $\mathcal{X}_j$, $j\in\calI \setminus\{i\}$, where $v$ is identified with $o_j$ from the new copy of $\mathcal{X}_j$. Then $\mathcal{X}$ is the \textit{free product of the graphs $\mathcal{X}_1$ and $\mathcal{X}_2$}.
A \textit{path} of length $n\in\N$ in $\mathcal{X}$ is then a sequence of vertices $(z_0,z_1,\dots,z_n)$ in $V$ such that there is an edge from $z_{i-1}$ to $z_i$ for each $i\in\{1,\dots,n\}$. This gives rise to a natural graph distance and, for $x\in V$, we denote by $|x|$ the \textit{length of a shortest path} from $o$ to $x$.

\begin{Ex}\label{ex:free-product}
Consider the sets $V_1=\{o_1,a\}$ and $V_2=\{o_2,b,c\}$ with the following graph structure:
\begin{center}
\begin{tikzpicture}[scale=.8]
\coordinate[label=above:$o_1$] (e) at (0,0);
\coordinate[label=above:$a$] (a) at (2,0);

\coordinate[label=above:$o_2$] (e2) at (5,0);
\coordinate[label=above:$b$] (b) at (7,1);
\coordinate[label=below:$c$] (c) at (7,-1);

\fill[red] (e) circle (2pt);
\fill[red] (a) circle (2pt);
\fill[red] (b) circle (2pt);
\fill[red] (c) circle (2pt);
\fill[red] (e2) circle (2pt);
\draw[{Latex[length=3mm]}-{Latex[length=3mm]},very thick] (e) -- (a);
\draw[-{Latex[length=3mm]},very thick] (e2) -- (b);
\draw[-{Latex[length=3mm]},very thick] (c) -- (e2);
\draw[{Latex[length=3mm]}-{Latex[length=3mm]},very thick] (b) -- (c);

\end{tikzpicture}
\end{center}
\pagebreak[4]
The free product $V_1\ast V_2$ has the the following structure:
\begin{center}
\begin{tikzpicture}[scale=1.4]
\coordinate[label=above:$e$] (e) at (0,0);
\coordinate[label=above:$a$] (a) at (2,0);
\coordinate[label=170:$ab$] (ab) at (3,1);
\coordinate[label=10:$ac$] (ab2) at (3,-1);
\coordinate[label=170:$aba$] (aba) at (4,2);
\coordinate[label=left:$abab$] (abab) at (4,3);
\coordinate[label=below:$abac$] (abab2) at (5,2);
\coordinate[label=left:$aca$] (ab2a) at (4,-2);
\coordinate[label=right:$acab$] (ab2ab) at (4,-3);
\coordinate[label=above:$acac$] (ab2ab2) at (5,-2);

\coordinate[label=above:$b$] (b) at (-1,1);
\coordinate[label=below:$c$] (b2) at (-1,-1);

\coordinate[label=below:$ba$] (ba) at (-2,1);
\coordinate[label=below:$bab$] (bab) at (-3,1);
\coordinate[label=right:$bac$] (bab2) at (-2,2);

\coordinate[label=above:$ca$] (b2a) at (-2,-1);
\coordinate[label=above:$cab$] (b2ab) at (-3,-1);
\coordinate[label=right:$cac$] (b2ab2) at (-2,-2);

\draw[{Latex[length=3mm]}-{Latex[length=3mm]},very thick] (e) -- (a);
\draw[-{Latex[length=3mm]},very thick] (e) -- (b);
\draw[{Latex[length=3mm]}-{Latex[length=3mm]},very thick] (b) -- (b2);
\draw[-{Latex[length=3mm]},very thick] (b2) -- (e);
\draw[-{Latex[length=3mm]},very thick] (a) -- (ab);
\draw[{Latex[length=3mm]}-{Latex[length=3mm]},very thick] (ab) -- (ab2);
\draw[-{Latex[length=3mm]},very thick] (ab2) -- (a);
\draw[{Latex[length=3mm]}-{Latex[length=3mm]},very thick] (ab) -- (aba);
\draw[-{Latex[length=3mm]},very thick] (aba) -- (abab);
\draw[{Latex[length=3mm]}-{Latex[length=3mm]},very thick] (abab) -- (abab2);
\draw[-{Latex[length=3mm]},very thick] (abab2) -- (aba);
\draw[{Latex[length=3mm]}-{Latex[length=3mm]},very thick] (abab) -- (4,4);
\draw[{Latex[length=3mm]}-{Latex[length=3mm]},very thick] (abab2) -- (6,2);
\draw[{Latex[length=3mm]}-{Latex[length=3mm]},very thick] (ab2) -- (ab2a);
\draw[-{Latex[length=3mm]},very thick] (ab2a) -- (ab2ab);
\draw[{Latex[length=3mm]}-{Latex[length=3mm]},very thick] (ab2ab) -- (ab2ab2);
\draw[-{Latex[length=3mm]},very thick] (ab2ab2) -- (ab2a);
\draw[{Latex[length=3mm]}-{Latex[length=3mm]},very thick] (ab2ab2) -- (6,-2);
\draw[{Latex[length=3mm]}-{Latex[length=3mm]},very thick] (ab2ab) -- (4,-4);
\draw[{Latex[length=3mm]}-{Latex[length=3mm]},very thick] (b) -- (ba);
\draw[-{Latex[length=3mm]},very thick] (ba) -- (bab);
\draw[{Latex[length=3mm]}-{Latex[length=3mm]},very thick] (bab) -- (bab2);
\draw[-{Latex[length=3mm]},very thick] (bab2) -- (ba);
\draw[{Latex[length=3mm]}-{Latex[length=3mm]},very thick] (bab2) --+ (0,1);
\draw[{Latex[length=3mm]}-{Latex[length=3mm]},very thick] (bab) --+ (-1,0);

\draw[{Latex[length=3mm]}-{Latex[length=3mm]},very thick] (b2) -- (b2a);
\draw[-{Latex[length=3mm]},very thick] (b2a) -- (b2ab);
\draw[{Latex[length=3mm]}-{Latex[length=3mm]},very thick] (b2ab) -- (b2ab2);
\draw[-{Latex[length=3mm]},very thick] (b2ab2) -- (b2a);
\draw[{Latex[length=3mm]}-{Latex[length=3mm]},very thick] (b2ab2) --+ (0,-1);
\draw[{Latex[length=3mm]}-{Latex[length=3mm]},very thick] (b2ab) --+ (-1,0);

\fill[red] (e) circle (2pt);
\fill[red] (a) circle (2pt);
\fill[red] (b) circle (2pt);
\fill[red] (b2) circle (2pt);
\fill[red] (ab) circle (2pt);
\fill[red] (ab2) circle (2pt);
\fill[red] (aba) circle (2pt);
\fill[red] (abab) circle (2pt);
\fill[red] (abab2) circle (2pt);
\fill[red] (ab2a) circle (2pt);
\fill[red] (ab2ab) circle (2pt);
\fill[red] (ab2ab2) circle (2pt);
\fill[red] (ba) circle (2pt);
\fill[red] (bab) circle (2pt);
\fill[red] (bab2) circle (2pt);
\fill[red] (b2a) circle (2pt);
\fill[red] (b2ab) circle (2pt);
\fill[red] (b2ab2) circle (2pt);

\end{tikzpicture}
\end{center}
\end{Ex}

The graph structure motivates the following definition.
The \textit{cone} rooted at $x\in V$ is given by the set
$$
C(x):=\bigl\lbrace y\in V \mid y \textrm{ has prefix } x\bigr\rbrace,
$$
that is, $C(x)$ consists of all words $y\in V$ such that each path from $o$ to $y$ in the graph $\mathcal{X}$ has to pass through $x$. In particular, we have $C(o)=V$. 
E.g., in Example \ref{ex:free-product} we have 
$C(ac)=\{ac,aca,acab,acac,\dots\}$.

\subsection{Random Walks on Free Products}
\label{subsec:rw-on-fp}

The next step is the construction of a random walk on the free product arising from $P_1,P_2$ in a natural way. For this purpose, we lift the transition matrices $P_1$ and $P_2$ to transition matrices $\bar P_i=\bigl(\bar p_i(x,y)\bigr)_{x,y\in V}$, $i\in\calI$, \mbox{on $V$:} if $x\in V$ with $\t(x)\neq i$ and $v,w\in V_i$, then $\bar p_i(xv,xw):=p_i(v,w)$. Otherwise, we set $\bar p_i(x,y):=0$. Choose $\alpha\in (0,1)$. Then we define a new transition matrix $P$ on V given by
$$
P=\alpha \cdot \bar P_1 + (1-\alpha)\cdot \bar P_2,
$$
which governs a nearest neighbour random walk on $\mathcal{X}$.
That is, standing at any vertex  $x\in V, \t(x)=i\in\calI$, we first toss a coin (with probability $\alpha$ for ``head'') and decide afterwards to perform one step within the copy of $\mathcal{X}_i$ to which $x$ belongs according to $P_i$ or to perform one step into the new copy of $\mathcal{X}_j$, $j\in\calI\setminus\{j\}$, attached at $x$ according to $P_j$. We assume that $P$ governs a \textit{transient} random walk $(X_n)_{n\in\N_0}$ on $V$ starting at $X_0=o$. For $x, y\in V$, the associated single and $n$-step transition probabilities are denoted by $p(x, y)$ and $p^{(n)}(x, y)$. Thus, $P$ governs a nearest neighbour random walk on the graph $\mathcal{X}$, where $P$ arises from a convex combination of the nearest neighbour random walks on the graphs $\mathcal{X}_1$ and $\mathcal{X}_2$. This definition ensures that every path $(w_0,\dots,w_n)$ in $\mathcal{X}$ has positive probability $\P\bigl[X_1=w_1,\dots,X_n=w_n|X_0=w_0\bigr]>0$.
We use the notation \mbox{$\P_x[\,\cdot\,]:=\P[\,\cdot\, \mid X_0=x]$} for $x\in V$.

In this article we are interested in the number of distinct vertices which are visited by the random walk. In particular, we study the speed at which new vertices are visited. This measures in some sense how much of the graph is explored by the random walk. The number of vertices visited by the random walk $(X_n)_{n\in\N_0}$ until time $n$ is given by 
$$
\mathbf{R}_n:=\bigl|\{X_0,X_1,\dots,X_n\}\bigr|.
$$
If there is a real number $\mathfrak{r}$ such that
$$
\mathfrak{r} = \lim_{n\to\infty} \frac{\mathbf{R}_n}{n} \quad \textrm{almost surely,}
$$
then $\mathfrak{r}$ is called the \textit{(asymptotic) range} of the random walk $(X_n)_{n\in\N_0}$.
 While existence of the asymptotic range is well-known for random walks on groups due to Kingman's subadditive ergodic theorem, existence for non-group random walks is \textit{not} guaranteed a-priori. In particular, since we have no group operation on $V$  (we only have a partial composition law of words)  we cannot apply the reasoning from the group case. This was the starting point for the present article to study existence of the asymptotic range for general free products of graphs, which form an important class of graphs.

Let us mention \cite[Theorem 3.3]{gilch:07}, which demonstrates existence (including a formula) of a positive number $\ell$, the rate of escape w.r.t. the word length (or block length), such that 
$$
\ell = \lim_{n\to\infty} \frac{\Vert X_n\Vert}{n} \quad \textrm{almost surely.}
$$
Denote by $V_\infty$ the set of infinite words $y_1y_2y_3\dots$ over the alphabet $V^\times_\ast$ such that no two consecutive letters arise from the same $V_i^\times$. For $x\in V$ and $y\in V_\infty$, denote by $x\wedge y$ the common prefix of maximal length of $x$ and $y$.
In \cite[Proposition 2.5]{gilch:07} it is shown that the random walk $(X_n)_{n\in\N_0}$ converges to some $V_\infty$-valued random variable $X_\infty$ in the sense that the length of the common prefix of $X_n$ and $X_\infty$ tends to infinity almost surely. In other words, $\lim_{n\to\infty} \Vert X_n\wedge X_\infty\Vert=\infty$ almost surely. 
We will make use of these results in the proofs later.

\begin{Remarks}\label{remarks2.2}
~\par
\begin{enumerate}
\item If $|V_1|=|V_2|=2$ and $p_i(x,y)=1-\mathds{1}_{x}(y)$ for $x,y\in V_i$, $i\in\calI$, then $V$ becomes the free product of groups 
$$
V=\bigl(\Z/(2\Z)\bigr)\ast \bigl(\Z/(2\Z)\bigr).
$$
In this case the underlying random walk is group-invariant and $(X_n)_{n\in\N_0}$ is recurrent. Moreover, existence of the asymptotic range $\mathfrak{r}$ is already known in this case and a formula is given by (\ref{equ:group-case-formula}), that is, $\mathfrak{r}=0$. If at least one out of $P_1$ and $P_2$ is \textit{not} irreducible, then the random walk on $V$ is transient and we may apply the techniques below.
\item In Subsection \ref{subsec:free-products} we made the assumption (A) that there are $i\in\calI$ and $x,y\in V_i$ with $p_i^{(m)}(x,y)>0$ and $p_i^{(n)}(y,x)>0$ for some $m,n\in\N$. If this assumption does \textit{not} hold, then there are no circles in the graph $\mathcal{X}$, that is, at every instant of time a new vertex is visited, yielding $\mathbf{R}_n=n$ and therefore $\mathfrak{r}=1$. Hence, we may exclude this case from now on.
\end{enumerate}
\end{Remarks}

\subsection{Generating Functions}

Our main tool will be the usage of generating functions, which we introduce now. The \textit{Green functions} related to $P_i$ and $P$ are given by
\begin{equation}\label{equ:green-functions}
G_i(x_i, y_i|z):=\sum_{n\geq 0} p_i^{(n)}(x_i, y_i)\cdot z^n \quad \textrm{ and } \quad  
G(x, y|z) :=\sum_{n\geq 0} p^{(n)}(x, y)\cdot z^n,
\end{equation}
where $z\in\mathbb{C}$, $x_i, y_i\in V_i$ and $x,y\in V$.  At this point we make the \textit{basic assumption} that the radii of convergence of $G(\cdot,\cdot |z)$ are at least $R>1$. This implies transience of our random walk on $V$. If the random walk on $V$ is irreducible, then all Green functions have common radius of convergence $R>1$; in the reducible case one can also easily show that the Green functions $G(x,y|z)$, $x,y\in V$, have radii of convergence $R_{x,y}$ with $R_{x,y}\geq R>1$ for some $R>1$. For instance, if $P_1$ and $P_2$ govern irreducible and reversible random walks, then $R>1$; see Woess \cite[Theorem 10.3]{woess}.

The \textit{first visit generating function} related to $P$ is given by
\begin{eqnarray*}
F(x,y|z):= \sum_{n\geq 0} \P_x\bigl[ \forall k<n: X_k\neq y,X_n=y\bigr]\cdot z^n,
\end{eqnarray*}
while the \textit{first return generating function} related to $P$ is given by
\begin{eqnarray*}
U(x,y|z):= \sum_{n\geq 1} \P_x\bigl[ \forall k\in\{1,\dots,n-1\}: X_k\neq y,X_n=y\bigr]\cdot z^n
\end{eqnarray*}
and the \textit{last visit generating function} related to $P$ is given by
\begin{eqnarray*}
L(x,y|z):= \sum_{n\geq 0} \P_x\bigl[ \forall k\in\{1,\dots,n\}: X_k\neq x,X_n=y\bigr]\cdot z^n.
\end{eqnarray*}
Analogously, we write $L_i(\cdot,\cdot|z)$, $i\in\calI$, for the corresponding last visit generating functions associated with the random walk on $V_i$ governed by $P_i$.

Recall the following important equations (e.g., see  \cite[Lemma 1.13]{woess}, \mbox{G. \cite[Lemma 1.6]{gilch}):}
\begin{eqnarray}
G(x,y|z) &=& F(x,y|z)\cdot G(y,y|z),\label{equ:Green1}\\
G(x,y|z) &=& G(x,x|z)\cdot L(x,y|z),\label{equ:Green3}\\
G(x,x|z) & = & \frac{1}{1-U(x,x|z)}.\label{equ:Green2}
\end{eqnarray}
If every path from $x\in V$ to $y\in V$ has to pass through $w\in V$, then
\begin{equation}\label{equ:L-factorisation}
L(x,y|z)=L(x,w|z)\cdot L(w,y|z);
\end{equation}
see \cite[Lemma 1.6]{gilch}.
\par
For $i\in\calI$, define
$$
\xi_i(z):=\sum_{n\geq 1} \P[X_n\in V_i^\times,\forall m<n: X_m\notin V_i^\times]\cdot z^n,
$$
We write $\xi_i:=\xi_i(1)$, the probability of visiting any element in $V_i^\times$ after finite time when starting at $o$. 
Due to the structure of free products it is easy to check that, for all $x\in V\setminus\{o\}$ with $\t(x)=i$,
\begin{equation}\label{equ:xi}
\xi_i=\P\bigl[\exists n\in\N: X_n\notin C(x) \mid X_0=x\bigr];
\end{equation}
in particular, the probability on the left hand side does only depend on the type $\t(x)$ of $x$.
In \cite[Lemma 2.3]{gilch:07} it is shown that the important strict inequality $\xi_i<1$ holds for all $i\in\calI$. Moreover, $\xi_i(z)$ has radius of convergence strictly bigger than $1$ due to
$$
\xi_i(z)= \sum_{n\geq 0}\P\bigl[X_n=o,\forall m<n: X_m\notin V_i^\times\bigr]\cdot z^n \cdot \alpha_i\cdot z \leq
G(o,o|z)\cdot \alpha_i\cdot z \quad \textrm{ for } z>0.
$$
Furthermore, we have the following important equation: if $x_i,y_i\in V_i$, $i\in\calI$, then
\begin{equation}\label{equ:L-Li}
L(x_i,y_i|z)=L_i\bigl(x_i,y_i\,\bigl|\, \xi_i(z)\bigr);
\end{equation}
see \cite[Proposition 2.7]{gilch}. The last equation together with (\ref{equ:Green3}) yields
\begin{equation}\label{equ:L-G}
L(x_i,y_i|z)=L_i\bigl(x_i,y_i\,\bigl|\, \xi_i(z)\bigr)=\frac{G_i\bigl(x_i,y_i\,\bigl|\,\xi_i(z)\bigr)}{G_i\bigl(x_i,x_i\,\bigl|\,\xi_i(z)\bigr)}.
\end{equation}
Finally, we observe that, for $i\in\calI$ and $z\in\mathbb{C}$ with $|z|<R$,
\begin{equation}\label{equ:Gi-sum}
\sum_{g\in V_i} G_i\bigl(o_i,g\,\bigl|\,\xi_i(z)\bigr) = \sum_{n\in\N_0}\sum_{g\in V_i} p_i^{(n)}(o_i,g)\cdot \xi_i(z)^n = \frac{1}{1-\xi_i(z)}.
\end{equation}

\section{Existence of the Asymptotic Range}
\label{sec:existence-range}

In this section we derive existence of the asymptotic range as formulated in Theorem \ref{thm:range-existence}. The plan of this section is as follows: in the next subsection we introduce exit times and construct a related Markov chain which tracks the random walk's trajectory to ``infinity''. While  we will derive some estimates for the increments of the range between two consecutive exit times in Subsection \ref{subsec:increments-estimates}, we finally will prove existence of $\mathfrak{r}$ in Subsection \ref{subsec:existence-range}.

\subsection{Exit Time Process}
\label{subsec:exit-time-process}
The idea is to trace the random walk's path to ``infinity''. Recall that the random walk converges to an infinite word in $V_\infty$ in the sense that the prefixes of increasing length stabilize; compare with the remarks in Subsection \ref{subsec:rw-on-fp}. For $k,n\in\mathbb{N}$, denote by $X_n^{(k)}$ the projection of $X_n$ onto its first $k$ letters. The \textit{exit times} are defined as follows: set $\e{0}:=0$ and for $k\geq 1$:
$$
\e{k}:=\inf\bigl\lbrace m> \e{k-1} \,\bigl|\, \forall n\geq m: X_n^{(k)} \textrm{ is constant}\bigr\rbrace.
$$ 
In other words, $\e{k}$ is the first instant of time from which on the first $k$ letters of $X_n$ do \textit{not} change any more. Due to \cite[Prop. 2.5]{gilch:07} we have $\e{k}<\infty$ almost surely, and by construction $C(X_{\e{k}})\subset C(X_{\e{k-1}})$.
The \textit{increments} are defined as $\mathbf{i}_k:=\e{k}-\e{k-1}$.
\par
By \cite[Proposition 3.2, Theorem 3.3]{gilch:07}, we have 
\begin{equation}\label{equ:speed}
\ell=\lim_{n\to\infty} \frac{\Vert X_n\Vert}{n}=\lim_{k\to\infty} \frac{k}{\e{k}}\in (0,1]\quad \textrm{ almost surely.}
\end{equation}
For $n\in\N_0$, define
$$
\mathbf{k}(n) := \max\{k\in\N_0 \mid \e{k}\leq n\},
$$
the \textbf{maximal exit time} at time $n$. If $X_{\e{k}}=g_1\dots g_k$, then we set 
$$
\W{k}:=g_k.
$$
Observe that 
\begin{equation}\label{equ:convergence-e_k(n)}
\lim_{k\to\infty}\frac{\e{\mathbf{k}(n)}}{n} = 1 \quad \textrm{almost surely, }
\end{equation}
since (\ref{equ:speed}) yields
$$
1\leq \frac{n}{\e{\mathbf{k}(n)}}\leq \frac{\e{\mathbf{k}(n)+1}}{\e{\mathbf{k}(n)}}=
\frac{\e{\mathbf{k}(n)+1}}{\mathbf{k}(n)}\frac{\mathbf{k}(n)}{\e{\mathbf{k}(n)}}\xrightarrow{n\to\infty} 1 \quad \textrm{almost surely}.
$$
In order to control the growth of $\mathbf{R}_n$ the idea is to count the number of visited elements of $V$ in the disjoint sets $C(X_{\e{k-1}})\setminus C(X_{\e{k}})$, which each is visited finitely often only. This partitioning needs further definitions.
We define the random function $\varphi_n:V \to\{0,1\}$ by
$$
\varphi_n(x):=\mathds{1}_{\{X_0,X_1,\dots,X_n\}}(x),
$$
that is, $\varphi_n(x)=1$ if and only if $x\in V$ is visited until time $n$. Hence, $\varphi_n$ describes the set of vertices  visited up to time $n$. The support of $\varphi_n$ is given by $\mathrm{supp}(\varphi_n)=\{x\in V\mid \varphi_n(x)=1\}$ and we have $\mathbf{R}_n=|\mathrm{supp}(\varphi_n)|$.

For $x\in V\setminus\{o\}$, we define a \textit{shift operation} denoted by $x^{-1}$ applied to subsets $A\subseteq C(x)$ as follows:
$$
x^{-1}A := \bigl\lbrace y \mid xy\in A\bigr\rbrace,
$$
that is, the shift cancels the common prefix $x$ of all elements in $A\subseteq C(x)$.
Consider now for a moment $\varphi_{\e{k}} \cdot \mathds{1}_{C(X_{\e{k-1}})}$, which has support contained in $C(X_{\e{k-1}})$. We apply a shift operation to the support of $\varphi_{\e{k}} \cdot \mathds{1}_{C(X_{\e{k-1}})}$ and define for $k\geq 2$ and $x\in V$:
$$
\psi_k(x):=\begin{cases}
1, &\textrm{if } X_{\e{k-1}}x\in C(X_{\e{k-1}}) \textrm{ and } \varphi_{\e{k}}\bigl(X_{\e{k-1}}x\bigr)=1,\\
0, &\textrm{otherwise}.
\end{cases}
$$
The support of $\psi_k$ is given by 
$$
\mathrm{supp}(\psi_k)=\{x\in V \mid \psi_k(x)=1\}=X_{\e{k-1}}^{-1} \mathrm{supp}\Bigl( \varphi_{\e{k}} \cdot \mathds{1}_{C(X_{\e{k-1}})}\Bigr).
$$
The case $k=1$ plays a distinguished role: let $V^\ast_{\t(X_{\e{1}})}$ be the set of words in $V$ starting with a letter in $V_{\t(X_{\e{1}})}$ including the empty word $o$. We define
$$
\psi_1 := \varphi_{\e{1}}\cdot \mathds{1}_{V^\ast_{\t(X_{\e{1}})}}.
$$
Let me explain the idea behind the definitions of $\psi_k$: as mentioned above we want to decompose $\mathrm{supp}(\varphi_n)$ into the disjoint parts contained in $C(X_{\e{k-1}})\setminus C(X_{\e{k}})$, $k\leq\e{\mathbf{k}(n)}$, and some remaining parts. In particular, for $n\geq \e{k}$ the set $\mathrm{supp}(\varphi_n)\cap \overline{C(X_{\e{k}})}$ remains constant, where $\overline{C(X_{\e{k}})}$ denotes the complement of the set $C(X_{\e{k}})$. More precisely, for $n\geq \e{k}$, 
$$
\mathrm{supp}(\varphi_n)\cap \overline{C(X_{\e{k}})} \subset \bigcup_{j=1}^k \bigl(C(X_{\e{j-1}})\setminus C(X_{\e{j}})\bigr),
$$
and $\varphi_n \cdot \mathds{1}_{C(X_{\e{j-1}})\setminus C(X_{\e{j}})}$ remains constant for $n\geq \e{k}$ and $j\leq k$.
The increment \mbox{$\mathbf{R}_{\e{k}}-\mathbf{R}_{\e{k-1}}$} depends only on visits of the random walk in the set $C(X_{\e{k-1}})$; thus, we just drop the common first $k-1$ already stabilized letters in the support of $\varphi_{\e{k}} \cdot \mathds{1}_{C(X_{\e{k-1}})}$ by the shift in the definition of $\psi_k$. This construction allows us to establish some homogeneity property for the already visited vertices in $C(X_{\e{k-1}})$ at time $\e{k}$, which in turn allows us to show later that $(\W{k},\psi_k)_{k\in\N}$ is a Markov chain. 

\begin{Ex}\label{ex:free-product2}
We revisit Example \ref{ex:free-product} and consider the following sample path:
$$
o \to a \to ab \to aba \to abab \to abac \to abaca  \to abac \to aba  \to abac \leadsto \textrm{staying in }C(abac).
$$
We have:
$$
X_{\mathbf{e}_1}=a, \ X_{\mathbf{e}_2}=ab, \ X_{\mathbf{e}_3}=aba, \ X_{\mathbf{e}_4}=abac.   
$$
Furthermore, we have 
\begin{eqnarray*}
&& \mathrm{supp}(\varphi_{\mathbf{e}_4}\cdot \mathds{1}_{C(X_{\mathbf{e}_3})})=\{aba,abab,abac,abaca\},\\
&&\mathrm{supp}(\psi_3)=\{o,a\}, \ \mathrm{supp}(\psi_4)=\{o,b,c,ca\}
\end{eqnarray*}
and $\mathbf{R}_{\e{4}}-\mathbf{R}_{\e{3}}=4$.
\end{Ex}

We start with the following important probability invariance property under shifts:

\begin{Lemma}\label{lem:prop-invariance}
Let be $w\in V$ and $(w,w_1,\dots,w_n)\in C(w)^{n+1}$, $n\in\N$. Then:
$$
\P_w[X_1=w_1,\dots,X_n=w_n] = \P_o[X_1=w^{-1}w_1,\dots,X_n=w^{-1}w_n].
$$
\end{Lemma}
\begin{proof}
It suffices to show that $p(w_i,w_{i+1})=p(w^{-1}w_i,w^{-1}w_{i+1})$ for $i\in \{1,\dots,n-1\}$. We write $w_i=wy_{i,1}\dots y_{i,m_i}$, where $y_{i,1},\dots,y_{i,m_i}\in V_\ast^\times$. By construction of $P$, we have
\begin{eqnarray*}
&&p(w_i,w_{i+1})\\
&=&\begin{cases}
p(y_{i,m_i},y_{i+1,m_{i+1}}), & \textrm{if } m_i=m_{i+1} \land y_{i,1}=y_{i+1,1},\dots,y_{i,m_i-1}=y_{i+1,m_{i}-1},\\
p(o,y_{i+1,m_{i+1}}), & \textrm{if } m_i+1=m_{i+1} \land y_{i,1}=y_{i+1,1},\dots,y_{i,m_i}=y_{i+1,m_{i}},\\
p(y_{i,m_{i}},o), & \textrm{if } m_i=m_{i+1}+1 \land y_{i,1}=y_{i+1,1},\dots,y_{i,m_{i}-1}=y_{i+1,m_{i+1}},\\
0, & \textrm{otherwise}
\end{cases} \\
&=&p(y_{i,1}\dots y_{i,m_i},y_{i+1,1}\dots y_{i+1,m_{i+1}}).
\end{eqnarray*}
This proves the proposed lemma.
\end{proof}

For $k\geq 1$, denote by 

$$
\mathcal{D}_k:=\bigl\lbrace (g,f) \,\bigl|\, g\in V^\times_\ast, f:V\to\{0,1\}: \P[\W{k}=g,\psi_{k}=f]>0\bigr\rbrace,
$$
the support of $(\W{k},\psi_k)$. We make the following crucial observation: 

\begin{Lemma}\label{lem:state-space}
For all $k\in\N$, $\mathcal{D}:=\mathcal{D}_1=\mathrm{supp}\bigl( (\W{k},\psi_k)\bigr)$.
\end{Lemma}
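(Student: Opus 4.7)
The plan is to exploit the translation-invariance of the walk $P$: for every $u\in V$ and every pair $v,w\in V$ such that the concatenations $uv, uw$ are valid elements of $V$, we have $p(uv, uw) = p(v, w)$. This is immediate from the definition $P = \alpha\bar P_1 + (1-\alpha)\bar P_2$ together with the construction of $\bar P_i$. I read the assertion as $\mathcal{D}_k = \mathcal{D}_1$ for every $k\ge 2$ (the case $k=1$ being tautological), and I will prove both inclusions by producing explicit positive-probability events.

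For $\mathcal{D}_1\subseteq \mathcal{D}_k$: I would start from a positive-probability realization of $\{\W{1}=g, \psi_1=f\}$. Since excursions into the cone opposite to $\tau(g)$ contribute nothing to $\psi_1$, one may assume this realization consists of a trajectory $(o=x_0, x_1,\dots, x_M=g)$ confined to $V_{\tau(g)}^\ast$ with $\{x_i\}=\mathrm{supp}(f)$, followed by staying in $C(g)$ forever. Pick a word $u=u_1\cdots u_{k-1}\in V$ of length $k-1$ with $\tau(u_{k-1})\ne\tau(g)$ and, using the factor-wise reachability assumption, fix a positive-probability path $\pi$ from $o$ to $u$ whose vertices all have word length at most $k-1$ and which touches $C(u)$ only at its endpoint $u$. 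The composite event ``walk follows $\pi$, then follows the shifted trajectory $(u, ux_1,\dots, ug)$, then stays in $C(ug)$ forever'' has positive probability by translation-invariance. A direct word-length inspection yields $X_{\e{k-1}}=u$, $X_{\e{k}}=ug$, hence $\W{k}=g$; and the visits to $C(u)$ up to $\e{k}$ are precisely $\{ux_i:0\le i\le M\}$, which shifted by $u^{-1}$ gives $\psi_k=f$. Thus $(g,f)\in\mathcal{D}_k$.

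For $\mathcal{D}_k\subseteq \mathcal{D}_1$: I would take a realization of $\{\W{k}=g,\psi_k=f\}$ and set $u=X_{\e{k-1}}$. Because the walk can only enter or leave $C(u)$ through $u$, its portion inside $C(u)$ up to time $\e{k}$ splits into finitely many excursions from $u$ back to $u$ occurring before $\e{k-1}$, together with the terminal segment from $u$ to $ug$ between $\e{k-1}$ and $\e{k}$; a standard word-length argument yields $X_{\e{k}-1}=u$, so the terminal segment ends with the single step $u\to ug$. Shifting each piece by $u^{-1}$ turns the non-terminal excursions into walks in $V_{\tau(g)}^\ast$ from $o$ back to $o$, and the terminal one into a walk from $o$ to $g$ ending with the step $o\to g$; by translation-invariance each shifted piece has positive probability. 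Concatenating them at their common endpoint $o$ produces a single positive-probability trajectory $\sigma$ in $V_{\tau(g)}^\ast$ from $o$ to $g$ visiting exactly $\mathrm{supp}(f)$. Combined with the event ``walk stays in $C(g)$ forever afterwards'' (of probability $1-\xi_{\tau(g)}>0$ by \cite[Lemma 2.3]{gilch:07}), one obtains a positive-probability event on which $\W{1}=g$ and $\psi_1=f$, so $(g,f)\in\mathcal{D}_1$.

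The delicate step I expect to dwell on is the inclusion $\mathcal{D}_k\subseteq \mathcal{D}_1$, since $\psi_k$ may record visits from several disconnected excursions into $C(u)$ that precede the commitment at $\e{k-1}$, and these have to be stitched back together into one single valid walk. The stitching works precisely because each such excursion is bracketed by visits to $u$ and therefore shifts to an excursion starting and ending at $o$; consecutive shifted excursions glue seamlessly at $o$, and every transition (including the gluings) retains positive probability via translation-invariance.
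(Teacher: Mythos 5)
Your proof is correct and follows essentially the same two-step strategy as the paper: to get $\mathcal{D}_1\subseteq\mathcal{D}_k$ you prepend a positive-probability path from $o$ to a word $u$ of length $k-1$ that avoids $C(u)$ except at its endpoint, and then run the shifted realization; to get $\mathcal{D}_k\subseteq\mathcal{D}_1$ you transfer a realization at step $k$ back to step $1$ by shifting $C(X_{\e{k-1}})$ by $X_{\e{k-1}}^{-1}$. You also correctly read the paper's $\varphi_{\e{k}}$ in the lemma as the typo it is for $\psi_k$, and correctly use the translation invariance $p(uv,uw)=p(v,w)$ and the lower bound $1-\xi_i>0$ that the paper relies on.

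The one genuine difference is that you are more careful than the paper about the ``stitching'' issue, and this is worth flagging. The paper's proof of $\mathcal{D}_k\subseteq\mathcal{D}_1$ simply asserts the existence of a single path of the form $(g_0,g_0y_1,\dots,g_0y_m=g_0g)$ inside $C(g_0)$ whose visited set equals $\mathrm{supp}(f)$, and likewise the proof of $\mathcal{D}_1\subseteq\mathcal{D}_k$ asserts the existence of a single path $(o,y_1,\dots,y_n=g)$ confined to the correct half of the tree with visited set $\mathrm{supp}(f)$. Neither assertion is immediate, because $\mathrm{supp}(\psi_k)$ can be accumulated across several disjoint excursions into $C(X_{\e{k-1}})$ (respectively, because the realization generating $\psi_1$ can wander outside $V^\ast_{\tau(X_{\e{1}})}$). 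Your explicit argument -- that each excursion into the relevant cone is bracketed by visits to the cone's root, hence shifts to an excursion from $o$ to $o$, and that these glue at $o$ into a single path of positive probability ending with the step $o\to g$ -- is exactly the missing justification, and in my view makes the proof cleaner than the paper's version. So: same approach, but a more complete execution of it.
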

\begin{proof}
First, we prove the inclusion ``$\subseteq$''. Let be $(g,f)\in\mathcal{D}_1$ and $k\geq 2$. W.l.o.g. we assume $g\in V_1$. Take now any $g_0\in V$ with $\Vert g_0\Vert =k-1$ and $\t(g_0)=2$. Let \mbox{$(o,x_1,\dots,x_m=g_0)$} be any (shortest) path from $o$ to $g_0$ such that $m=|g_0|$. In particular, we have 
$$
\{o,x_1,\dots,x_m\}\cap C(g_0)=\{g_0\}.
$$
Furthermore, let $(o,y_1,\dots,y_n=g)$ be a path in $V\setminus V_2^\times$ such that $\mathrm{supp} (f)=\{o,y_1,\dots,y_n\}$. This choice is possible since 
$\mathrm{supp} (f)\subseteq V\setminus V_2^\times$ due to the  assumption  that $(g,f)\in\mathcal{D}_1$ holds.
 Then the path
$$
(o,x_1,\dots,x_m=g_0,g_0y_1,g_0y_2,\dots,g_0y_n=g_0g)
$$ 
is a path from $o$ to $g_0g$, which allows to generate $\W{k}=g$ and $\psi_k=f$ at time $m+n$ with positive probability, that is,
\begin{eqnarray*}
&& \mathbb{P}\bigl[(\W{k},\psi_k)=(g,f)\bigr] \\
&\geq&\mathbb{P}\left[\begin{array}{c} X_0=o,X_1=x_1,\dots,X_m=x_m=g_0,X_{m+1}=g_0y_1,\dots,X_{m+n}=g_0g,\\\forall j\geq m+n: X_j\in C(g_0g)\end{array}\right] \\
&=&\mathbb{P}\left[\begin{array}{c} X_0=o,X_1=x_1,\dots,X_m=x_m,\\
X_{m+1}=g_0y_1,\dots,X_{m+n}=g_0g\end{array}\right] \cdot \mathbb{P}_{g_0g}\bigl[\forall j\geq 1: X_j\in C(g_0g)\bigr] \\
&=& \mathbb{P}\bigl[X_0=o,X_1=x_1,\dots,X_m=g_0,X_{m+1}=g_0y_1,\dots,X_{m+n}=g_0g\bigr] \cdot (1-\xi_1) >0.
\end{eqnarray*}
Thus, we have proven $(g,f)\in \mathrm{supp}\bigl( (\W{k},\psi_k)\bigr)$.
\par
Now we show ``$\supseteq$'': let be $(g,f)\in \mathcal{D}_k$, $k\geq 2$. W.l.o.g. we assume $\t(g)=1$. Then there  exists  some $g_0\in V$ with $\Vert g_0\Vert=k-1$ and $\t(g_0)=2$ such that there exists a path of the form $(g_0,y_1,\dots,y_{m-1},y_m=g_0g)$ in $C(g_0)$ with $\mathrm{supp}(f)=\{o,g_0^{-1}y_1,\dots,g_0^{-1}y_m\}$. This implies together with Lemma \ref{lem:prop-invariance} and (\ref{equ:xi}) that 
\begin{eqnarray*}
&& \mathbb{P}\bigl[(\W{1},\psi_1)=(g,f)\bigr] \\
&\geq&\mathbb{P}\bigl[X_0=o,X_1=g_0^{-1}y_1,\dots,X_m=g_0^{-1}y_m=g,\forall j\geq m: X_j\in C(g)\bigr] \\
&=& \mathbb{P}\bigl[X_0=o,X_1=g_0^{-1}y_1,\dots,X_m=g_0^{-1}y_m\bigr] \cdot \mathbb{P}_{g}\bigr[\forall j\geq 1: X_j\in C(g)\bigr] \\
&=& \mathbb{P}_{g_0}\bigl[X_0=g_0,X_1=y_1,\dots,X_m=y_m\bigr] \cdot (1-\xi_1) >0.
\end{eqnarray*}
That is, $(g,f)\in\mathcal{D}_1$. This finishes the proof.
\end{proof}

The next proposition will be an essential ingredient in our proofs later.

\begin{Prop} \label{prop:exit-time-markov-chain}
$(\W{k},\psi_k)_{k\in\N}$ is an irreducible, homogeneous Markov chain on the state space $\mathcal{D}$.
\end{Prop}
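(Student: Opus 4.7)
The plan is to verify the three assertions---Markov, homogeneous, irreducible---separately, with the Markov and homogeneity parts both resting on the self-similar construction of the free product and on the fact that after time $\e{k}$ the walk is confined to the cone $C(X_{\e{k}})$.

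For the Markov and homogeneity properties I would identify, for each $k\geq 1$, the conditional law
$$
\P\bigl[(\W{k+1},\psi_{k+1})=(g',f')\,\big|\,(\W{j},\psi_j)_{1\leq j\leq k}\bigr]
$$
and show it depends on the history only through $(\W{k},\psi_k)$, with a formula that does not involve $k$. The key decomposition is $\psi_{k+1}=\W{k}^{-1}\bigl(\psi_k\cdot\mathds{1}_{C(\W{k})}\bigr)+[\text{post-}\e{k}\text{ contribution}]$: the first summand records visits to $C(X_{\e{k}})$ that happened \emph{before} the $k$th letter stabilized (which are already encoded in $(\W{k},\psi_k)$), while the second summand and $\W{k+1}$ are measurable functions of the shifted trajectory $(X_{\e{k}}^{-1}X_{\e{k}+n})_{n\geq 0}$. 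By the definition of the lifted matrices $\bar P_1,\bar P_2$ (which act only through the type of the current vertex) and by the self-similar structure of $\mathcal{X}$, the law of this shifted trajectory---conditional on $X_{\e{k}}=u\cdot\W{k}$ and on the past---depends on $u$ only through $\W{k}$, and coincides with that of a fresh walk issued from the single letter $\W{k}\in V$ and conditioned on never leaving $C(\W{k})$. This simultaneously yields both the Markov property and time-homogeneity.

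For irreducibility, Lemma~\ref{lem:state-space} is essential: it shows $\mathcal{D}_k=\mathcal{D}$ for every $k$, so every $(g',f')\in\mathcal{D}$ is realized at time $\e{1}$ with positive probability starting from $o$. Given an arbitrary pair $(g,f),(g',f')\in\mathcal{D}$ with, say, $\tau(g)=\tau(g')$, I would concatenate the following pieces of trajectory: first, a positive-probability path out of $X_{\e{k}}$ that stabilizes a fixed intermediate letter of the opposite type without producing any superfluous visits (giving an auxiliary state with known, ``clean'' $\psi$); then a path of the type constructed in the proof of Lemma~\ref{lem:state-space} that produces $(g',f')$ as the exit-time state at the next level. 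Positivity of each constituent is inherited from the standing nondegeneracy assumption $p_i^{(n_x)}(o_i,x)>0$ together with $\xi_i<1$ (\cite[Lemma~2.3]{gilch:07}).

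The main obstacle is that $\e{k}$ is \emph{not} a stopping time, so the strong Markov property cannot be invoked directly at $\e{k}$. I would circumvent this by decomposing over the countably many finite stopping times $T$ at which the walk could plausibly equal $X_{\e{k}}$ and restricting to the event $\{\forall n\geq T:X_n\in C(X_T)\}$; by the homogeneity of $P$ and the free product structure this event has conditional probability $1-\xi_{\tau(X_T)}>0$ independently of the history up to $T$. Equivalently one can pass to the Doob $h$-transform conditional on $X_\infty\in C(X_{\e{k}})$, for which $\e{k}$ becomes a genuine stopping time and the usual strong Markov property applies. In either approach, self-similarity is then used to identify the conditional law with the universal one described above, completing the Markov/homogeneity argument.
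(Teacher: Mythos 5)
Your plan for the Markov and homogeneity properties is essentially the one the paper carries out: the paper decomposes over the (countably many) admissible trajectories realizing $\e{k}=n$ and $\e{k+1}=n+m$, factors out the $\P_{X_{\e{k+1}}}[\forall j\geq 1: X_j\in C(X_{\e{k+1}})]=1-\xi_{\tau(\W{k+1})}$ term, and reads off a one-step kernel
\[
q\bigl((g_k,f_k),(g_{k+1},f_{k+1})\bigr)=\frac{1-\xi_{\tau(g_{k+1})}}{1-\xi_{\tau(g_k)}}\sum_{m}\sum_{(o,v_1,\dots,v_m=g_{k+1})\in\Pi'_m}\P[X_1=v_1,\dots,X_m=v_m],
\]
depending only on the pair $(g_k,f_k),(g_{k+1},f_{k+1})$. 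Your ``decompose over stopping times $T$ and restrict to $\{\forall n\geq T: X_n\in C(X_T)\}$'' is exactly this computation; the $h$-transform reformulation is an equivalent packaging of the same algebra, so this part is the same argument in different clothing.

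There is, however, a genuine gap in your irreducibility step. You propose reaching, from an arbitrary $(g,f)\in\mathcal{D}$, an auxiliary state ``with known, clean $\psi$'' by a single positive-probability path that ``stabilizes a fixed intermediate letter of the opposite type without producing any superfluous visits.'' But controlling future visits does not control the \emph{inherited overhead}: by construction, $\mathrm{supp}(\psi_{k+1})$ always contains $\W{k}^{-1}\bigl(\mathrm{supp}(\psi_k)\cap C(\W{k})\bigr)$, and for a general $(g,f)\in\mathcal{D}$ the intersection $\mathrm{supp}(f)\cap C(g)$ can be strictly larger than $\{g\}$ (the walk may re-enter and leave $C(g)$ several times before $\e{1}$). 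So one step of the exit-time chain cannot, in general, produce $\psi=\mathds{1}_o+\mathds{1}_{g_0}$; the leftover overhead is simply shifted, not erased. The paper's proof fixes this by iterating: it moves ``straight forward'' for at most $|\mathrm{supp}(f)|+1$ steps of the exit-time chain, which progressively shears away the finite inherited set (each step drops the words of $\mathrm{supp}(f)$ that no longer lie in the stabilized cone), until a clean $(g_0,\mathds{1}_o+\mathds{1}_{g_0})$ is reached; only then does it apply the one-step argument from Lemma~\ref{lem:state-space} to land on the target $(g',f')$. You should replace the single intermediate step by this finite iteration; the rest of your irreducibility sketch (using $\xi_i<1$ and $p_i^{(n_x)}(o_i,x)>0$ for positivity of each leg) is sound.
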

\begin{proof}
For proving the Markov property, let  be $k\in\N$, $(g_1,f_1),\dots, (g_{k+1},f_{k+1})\in \mathcal{D}$ with 
$$
\P\bigl[(\W{1},\psi_1)=(g_1,f_1),\dots,(\W{k},\psi_k)=(g_k,f_k)\bigr]>0.
$$ 
Denote by $\Pi_{n}$, $n\in\N$, the set of paths $(o,x_1,\dots,x_{n-1},x_n=g_1\dots g_k)$ in $V$ such that
$$
\left[\begin{array}{c} X_1=x_1,\dots,X_{n-1}=x_{n-1},X_n=x_n,\\ \forall n'>n: X_{n'}\in C(x_n)\end{array}\right] \cap 
 [\e{k}=n]\cap \bigcap_{i=1}^k \bigl[(\W{k},\psi_k)=(g_k,f_k)\bigr]\neq \emptyset.
$$
Moreover, denote by $\Pi'_{m}$, $m\in\N$, the set of paths \mbox{$(o,v_1,\dots,v_{m-1},v_m=g_{k+1})$} in $V$ satisfying $v_i\notin V_j^\times$, where $j\in\calI\setminus\{\t(g_{k+1})\}$, such that
\begin{eqnarray*}
&& \bigl[X_1=v_1,\dots,X_{m}=v_m,\forall m'>m: X_{m'}\in C(v_m)\bigr]
\cap  [\e{1}=m] \neq\emptyset\textrm{ and }\\
&& \mathrm{supp}(f_{k+1}) = \{v_1,\dots,v_m\} \cup g_{k}^{-1}\bigl( \mathrm{supp}(f_{k})\cap C(g_{k})\bigr).
\end{eqnarray*}
We remark that $\mathrm{supp}(f_{k+1})$ splits up into the part $\mathrm{supp}(f_{k})\cap C(g_{k})$ visited before time $\mathbf{e}_k$ (shifted by $g_{k}^{-1}$) and into the part $\{v_1,\dots,v_m\}$ visited after time $\e{k}$.

Then we obtain by definition of $P$ and the Markov property of $(X_n)_{n\in\N_0}$:
\begin{eqnarray*}
&&\P\bigl[ (\W{1},\psi_1)=(g_1,f_1),\dots,(\W{k+1},\psi_{k+1})=(g_{k+1},f_{k+1})\bigr] \\
&=&\sum_{n\in \N}\sum_{(o,x_1,\dots,x_{n-1},x_n)\in\Pi_{n}} \P[X_1=x_1,\dots,X_n=x_n] \\
&& \quad \cdot \sum_{m\in \N}\sum_{(o,v_1,\dots,v_{m-1},v_m=g_{k+1})\in\Pi'_{m}}  \P[X_{n+1}=x_nv_1,\dots X_{n+m}=x_nv_m\mid X_n=x_n]\\[1ex]
&&\quad\quad \cdot \P[\forall j>m+n: X_j\in C(x_nv_m) \mid X_{n+m}=x_nv_m] \\[1ex]
&\stackrel{\textrm{Lemma }\ref{lem:prop-invariance}}{=}& \sum_{n\in \N}\sum_{(o,x_1,\dots,x_{n-1},x_n)\in\Pi_{n}} \P[X_1=x_1,\dots,X_n=x_n] \\
&&\quad \cdot \sum_{m\in \N}\sum_{(o,v_1,\dots,v_{m-1},v_m=g_{k+1})\in\Pi'_{m}}  \P[X_1=v_1,\dots,X_m=v_m] \cdot (1-\xi_{\t(g_{k+1})}). 
\end{eqnarray*}
Analogously,
\begin{eqnarray*}
&&\P\bigl[ (\W{1},\psi_1)=(g_1,f_1),\dots,(\W{k},\psi_{k})=(g_{k},f_{k})\bigr] \\[1ex]
&=& \sum_{n\in \N}\sum_{(o,x_1,\dots,x_{n-1},x_n)\in\Pi_{n}} \P[X_1=x_1,\dots,X_n=x_n]  \cdot (1-\xi_{\t(g_{k})}). 
\end{eqnarray*}
Thus,
\begin{eqnarray}
&& \P\bigl[(\W{k+1},\psi_{k+1})=(g_{k+1},f_{k+1})\,\bigl|\,(\W{1},\psi_1)=(g_1,f_1),\dots,(\W{k},\psi_{k})=(g_{k},f_{k}) \bigr]\nonumber \\[1ex]
&=& \frac{\P\bigl[ (\W{1},\psi_1)=(g_1,f_1),\dots,(\W{k+1},\psi_{k+1})=(g_{k+1},f_{k+1})\bigr] }{\P\bigl[ (\W{1},\psi_1)=(g_1,f_1),\dots,(\W{k},\psi_{k})=(g_{k},f_{k})\bigr]}\nonumber\\[1ex]
&=& \frac{1-\xi_{\t(g_{k+1})}}{1-\xi_{\t(g_{k})}}\cdot \sum_{m\in \N}\sum_{(o,v_1,\dots,v_{m-1},v_m=g_{k+1})\in\Pi'_{m}}  \P[X_1=v_1,\dots,X_m=v_m].\label{equ:transition-probabilities}
\end{eqnarray}
This shows that $(\W{k},\psi_k)_{k\in\N}$ is indeed a homogeneous Markov chain, since the conditional probabilities depend only on $(g_k,f_k)$ and $(g_{k+1},f_{k+1})$.
\par
For the proof of irreducibility, let be $(g_1,f_1),(g_2,f_2)\in \mathcal{D}$. From $(g_1,f_1)$ the process $(\mathbf{W}_k,\psi_k)_{k\in\N}$ can walk with positive probability in at most $|\mathrm{supp}(f_1)|+1$ steps to some $(g_0,f_0)\in\mathcal{D}$ with $g_0\notin V^\times_{\t(g_2)}$ and $f_0=\mathds{1}_o+\mathds{1}_{g_0}$. Starting at $(g_0,f_0)$ we can go to $(g_2,f_2)$ with positive probability in one step, which is easy to check and is similar to the reasoning of the proof of Lemma \ref{lem:state-space}. This yields irreducibility.
\end{proof}
\pagebreak[4]
Observe that $(\W{k},\psi_k)_{k\in\N}$ is \textit{not} aperiodic, since $\W{k}\in V_1^\times$ implies $\W{k+1}\in V_2^\times$. But we have the following result:
\begin{Lemma}
The Markov chain $(\W{k},\psi_k)_{k\in\N}$ has period $2$.
\end{Lemma}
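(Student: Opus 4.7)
My plan is to split the claim into a lower bound (period $\geq 2$) via a deterministic type-alternation observation and an upper bound (period $\leq 2$) via an explicit two-step self-return. For the lower bound, I would note that $X_{\e{k+1}}$ is obtained from $X_{\e{k}}$ by appending exactly one new letter, namely $\W{k+1}$. Since (\ref{equ:free-product}) forbids two consecutive letters from the same $V_i^\times$, we must have $\tau(\W{k+1}) \neq \tau(\W{k})$ for all $k \geq 1$. Hence for any state $(g,f) \in \mathcal{D}$ every return of the Markov chain to $(g,f)$ takes an even number of steps, and so the period is an even integer that is at least $2$.

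For the upper bound, I would exhibit a single state returning to itself in exactly two steps. Since the reachability assumption implies that $o_i$ cannot be absorbing for $P_i$, one can pick $g_i \in V_i^\times$ with $p_i(o_i, g_i) > 0$ for each $i \in \calI$. Let $f_1 := \mathds{1}_{\{o,g_1\}}$. The event that the walk jumps straight from $o$ to $g_1$ at time $1$ and never leaves $C(g_1)$ thereafter has positive probability $\alpha\, p_1(o_1, g_1)(1 - \xi_1)$ and realises $(\W{1}, \psi_1) = (g_1, f_1)$, so $(g_1, f_1) \in \mathcal{D}$. Conditioning on $X_{\e{k}} = wg_1$ for some prefix $w$ with $(\W{k}, \psi_k) = (g_1, f_1)$, I would then consider the explicit sample-path event in which the walk moves $wg_1 \to wg_1 g_2 \to wg_1 g_2 g_1$ in the next two steps and thereafter stays in $C(wg_1 g_2 g_1)$ forever. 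By the strong Markov property at $\e{k}$, this event has positive conditional probability $(1-\alpha)\, p_2(o_2, g_2) \cdot \alpha\, p_1(o_1, g_1)(1 - \xi_1)$.

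On this event $\e{k+1} = \e{k} + 1$ and $\e{k+2} = \e{k} + 2$, so the only remaining task is to verify that the prefix-shifts in the definitions of $\psi_{k+1}$ and $\psi_{k+2}$ yield the intended minimal patterns. The vertices visited in $C(wg_1)$ during $[\e{k}, \e{k+1}]$ are exactly $\{wg_1, wg_1 g_2\}$, which after shifting by $wg_1$ become $\{o, g_2\}$, and analogously the visits in $C(wg_1 g_2)$ during $[\e{k+1}, \e{k+2}]$ shift to $\{o, g_1\}$, giving $(\W{k+2}, \psi_{k+2}) = (g_1, f_1)$. This final bookkeeping --- ruling out spurious vertices in $\mathrm{supp}(\psi_{k+1})$ or $\mathrm{supp}(\psi_{k+2})$ --- is the step that requires the most care, but it follows by inspection from the explicit deterministic path. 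Combined with the lower bound, the period equals $2$.
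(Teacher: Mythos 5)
Your proof is correct and takes essentially the same approach as the paper: both observe that the type of $\W{k}$ alternates deterministically (so the period is even), and both exhibit a two‑step return from the minimal state $(g_1,\mathds{1}_{\{o,g_1\}})$ through a state of opposite type. One small caution worth noting: $\e{k}$ is \emph{not} a stopping time (it depends on the entire future of the walk), so ``the strong Markov property at $\e{k}$'' is not literally applicable; the rigorous justification is the path‑decomposition argument already carried out in the paper's proof that $(\W{k},\psi_k)_{k\in\N}$ is a Markov chain, and once one normalizes by the conditioning event $\{(\W{k},\psi_k)=(g_1,f_1)\}$ the extraneous factor $1-\xi_1$ in your stated conditional probability disappears — but neither point affects the positivity that the argument requires.
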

\begin{proof}
Let be $(g,f)\in\mathcal{D}$ with $g\in V_1^\times$, $p(o,g)>0$, $f=\mathds{1}_o+\mathds{1}_g$. Take any $g'\in V_2^\times$ with $p(o,g')>0$ and set $f':=\mathds{1}_o+\mathds{1}_{g'}$. The formula (\ref{equ:transition-probabilities}) for the transition probabilities yields:
\begin{eqnarray*}
\mathbb{P}\bigl[ (\W{2},\psi_2)=(g',f')\mid (\W{1},\psi_1)=(g,f)\bigr] &\geq & \frac{1-\xi_2}{1-\xi_1}p(o,g') >0\\
\textrm{and } \quad \mathbb{P}\bigl[ (\W{3},\psi_3)=(g,f)\mid (\W{2},\psi_2)=(g',f')\bigr] &\geq & \frac{1-\xi_1}{1-\xi_2}p(o,g) >0.\\
\end{eqnarray*}
Thies implies:
\begin{eqnarray*}
&&\mathbb{P}\bigl[ (\W{3},\psi_3)=(g,f)\mid (\W{1},\psi_1)=(g,f)\bigr] \\
&\geq& \mathbb{P}\bigl[ (\W{3},\psi_3)=(g,f)\mid (\W{2},\psi_2)=(g',f')\bigr]  \\
&&\quad \cdot \mathbb{P}\bigl[ (\W{2},\psi_2)=(g',f')\mid (\W{1},\psi_1)=(g,f)\bigr] >0,
\end{eqnarray*}
that is, $(g,f)$ can be reached from $(g,f)$ within two steps, which proves the claim.
\end{proof}
We remark that the process $(\W{k},\psi_k)_{k\in\N}$ would be aperiodic if $|\mathcal{I}|\geq 3$.
\par
As an abbreviation we will denote the transition probabilities of $(\W{k},\psi_k)_{k\in\N}$ by 
$$
q\bigl((g_1,f_1),(g_2,f_2)\bigr):=\P\bigl[(\W{2},\psi_2)=(g_2,f_2)\mid (\W{1},\psi_1)=(g_1,f_1) \bigr]
$$
for $(g_1,f_1),(g_2,f_2)\in\mathcal{D}$, 
and the $n$-step transition probabilities by $q^{(n)}\bigl((g_1,f_1),(g_2,f_2)\bigr)$.

\subsection{Estimates for Increments between Exit Times}
\label{subsec:increments-estimates}

In this subsection we derive some uniform upper bounds for expectations of some random quantities associated with the exit times. Those will be needed in order to ensure that the expected increase of the range between two consecutive exit times is uniformly bounded. We start with the following lemma:

\begin{Lemma}\label{lem:sup-G'}
Let be $R_0\in (1,R)$. Then:
$$
\sup_{x\in V} G(x,x|R_0)<\infty.
$$
\end{Lemma}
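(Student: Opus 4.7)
My strategy is first to reduce the derivative bound to a uniform bound on the Green function at some $r > 1$, and then to exploit the self-similar cone structure of the free product together with the strict transience inequality $\xi_i<1$ to close the argument.

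For any $r \in (1, R)$, using the elementary bound $n \le C_r\, r^n$ with $C_r := \sup_{n \ge 0} n\, r^{-n} < \infty$, we get
$$
G'(x,x|1) \;=\; \sum_{n \ge 0} n\, p^{(n)}(x,x) \;\le\; C_r \cdot G(x,x|r).
$$
Hence it suffices to show $\sup_{x \in V} G(x,x|r) < \infty$ for some $r \in (1, R)$. Using $G(x,x|r) = 1/(1-U(x,x|r))$, this is equivalent to $\inf_{x \in V}\bigl(1 - U(x,x|r)\bigr) > 0$.

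I would decompose first returns to $x$ into those that stay in the cone $C(x)$ and those that leave $C(x)$ (observing that once the walk is in $C(x) \setminus \{x\}$ it cannot leave $C(x)$ without first passing through $x$, so ``leaving $C(x)$ before return'' forces $X_1 \notin C(x)$):
$$
U(x,x|z) \;=\; U^{\mathrm{in}}(x,x|z) + U^{\mathrm{out}}(x,x|z).
$$
The in-cone part depends on $x$ only mildly: by the canonical cone isomorphism $C(x) \cong C([x])$, every first-passage factor from within the cone back to its root is universal (depending only on $\tau(x)$), and the only $x$-dependent contribution is a bounded self-loop term. The out-of-cone part has the form $U^{\mathrm{out}}(x,x|z) = \sum_{v' \in V_{\tau(x)} \setminus \{[x]\}} p(x, x'v')\, z \cdot F(x'v', x|z)$, where $x'$ is the parent of $x$; here I would invoke the generating function decomposition from \cite{gilch:07} and, crucially, the strict inequality $\xi_i < 1$ of \cite[Lemma 2.3]{gilch:07} to show that at each visit of the walk to $x'$ or to a sibling of $x$ there is a uniform positive probability of escaping to infinity along a cone disjoint from $C(x)$, thereby producing a uniform geometric bound on $F(x'v', x|r)$.

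\textbf{Main obstacle.} The key technical difficulty is the uniform bound on $F(x'v', x|r)$ for $r > 1$: the trivial probability bound $F(\cdot,\cdot|1) \le 1$ that suffices at $r = 1$ is no longer available, so one must combine $\xi_i < 1$ with the radius-of-convergence hypothesis to extract a uniform geometric decay for the ``detours'' of the walk outside $C(x)$. Making this precise using the generating function formulas of \cite{gilch:07}, and ensuring that the resulting constants do not degenerate as $(\tau(x), [x])$ varies over the possibly countably infinite state space, is where the bulk of the work lies.
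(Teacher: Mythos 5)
Your opening reduction (bound $G'(x,x|1)\le C_r\,G(x,x|r)$, then $G(x,x|r)=1/(1-U(x,x|r))$) is a legitimate starting point, and your observation that $\xi_i<1$ gives a uniform positive escape probability from any $x$ is exactly the ingredient the paper uses to get $\sup_x U(x,x|1)\le 1-\varepsilon_0(1-\xi_i)<1$. But from there you diverge onto a harder path and, as you yourself flag, do not close it. The plan to split $U$ into in-cone and out-of-cone contributions and then control $F(x'v',x|r)$ uniformly at $r>1$ via the free-product generating-function machinery of \cite{gilch:07} is genuinely delicate: you would need to rule out that the detour generating functions degenerate as $[x]$ ranges over a possibly infinite factor, and you give no mechanism for this. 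As it stands the proposal has a gap precisely at the step you label the main obstacle.

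The paper avoids all of this with a short convexity argument that you should note. Writing $G'(x,x|1)=U'(x,x|1)/(1-U(x,x|1))^2$, the denominator is handled by the uniform bound above, and for the numerator one fixes any $R_0\in(1,R)$ and uses that $G(x,x|R_0)<\infty$ forces $U(x,x|R_0)<1$ (by \eqref{equ:Green2}); convexity of $z\mapsto U(x,x|z)$ on $[1,R_0]$ then gives
$$
0\le U'(x,x|1)\le\frac{U(x,x|R_0)-U(x,x|1)}{R_0-1}<\frac{1}{R_0-1},
$$
a bound that is uniform in $x$ even though $U(x,x|R_0)$ itself need not be bounded away from $1$. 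This is the key trick your plan misses: you do not need a uniform bound on $U(x,x|r)$ at any $r>1$, only the nonuniform statement $U(x,x|R_0)<1$ together with the uniform bound at $z=1$. If you want to salvage your route, the same convexity inequality (chord above graph) would give $U(x,x|r)\le U(x,x|1)+\frac{r-1}{R_0-1}\bigl(1-U(x,x|1)\bigr)$, which for $r$ close to $1$ yields the uniform bound $\sup_x U(x,x|r)<1$ you need; but then the whole in/out-cone decomposition and the appeal to cone isomorphisms and $F$-bounds becomes unnecessary.
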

\begin{proof}
Since all Green functions $G(x,x|z)$, $x\in V$, have radii of convergence of at least $R$, all values $G(x,x|R_0)$ are finite. First, we show that $\sup_{x\in V} U(x,x|1)<1$.
Let be $x\in V$ with $\t(x)=1$ and $g_2\in V_2^\times$ such that \mbox{$p_2(o_2,g_2)>0$.} 
Since $\xi_2<1$ we have:
$$
U(x,x|1)\leq 1 - \mathbb{P}_x\bigl[X_1=xg_2,\forall n>1: X_n \textrm{ has prefix } xg_2\bigr]= 1-(1-\alpha) p_2(o_2,g_2) (1-\xi_2)<1.
$$
Analogously, take any $g_1\in V_1^\times$ with $p_1(o_1,g_1)>0$. Then we obtain for all $y\in V$ with $\t(y)=2$:
$$
U(y,y|1)\leq 1-\alpha\cdot p_1(o_1,g_1)\cdot (1-\xi_1)<1.
$$
Transience yields $U(o,o|1)<1$, and therefore we have
$$
\overline{U}:=\sup_{x\in V} U(x,x|1)\leq \max\{1-\alpha p_1(o_1,g_1) (1-\xi_1),1-(1-\alpha) p_2(o_2,g_2) (1-\xi_2),U(o,o|1)\}<1.
$$

In view of (\ref{equ:Green2}) it suffices to show that $U(x,x|R_0)\in(0,1]$ is bounded away from $1$. If \mbox{$R=\infty$,} then $U(x,x|z)<1$ for all $z>0$, and monotonicity and convexity yield \mbox{$U(x,x|z)=0$.} It remains to consider the case  $R<\infty$.
First, observe that $U(x,x|R_0)<1$ for all $x\in V$ due to (\ref{equ:Green2}). Convexity of $U(x,x|z)$  together with $U(x,x|1)\leq \overline{U}$ yields 
$$
U(x,x|R_0) \leq \frac{1-\overline{U}}{R-1}\cdot R_0 +\overline{U}-\frac{1-\overline{U}}{R-1} <1,
$$
that is, $\sup_{x\in V} U(x,x|R_0)<1$, from which the claim of the lemma follows.
\end{proof}

For sake of better readability in the next proof, we introduce further notation.
For $i\in\calI$, $y\in V_j$ with $j\in\calI\setminus\{i\}$ and $n\in\N$ define
$$
k_i^{(n)}(y):=\P\bigl[\forall \ell \in\{1,\dots,n\}: X_\ell\notin V_i^\times, X_{n-1}\notin C(y),X_n=y \bigr].
$$
Furthermore, if $x\in V\setminus \{o\}$ then we set $\xi_{\neg \t(x)}=\xi_2$, if $\t(x)=1$, and $\xi_{\neg \t(x)}=\xi_1$, if $\t(x)=2$. 
\par
The following lemma ensures that the expectations of the increments between consecutive exit times are uniformly bounded.

\begin{Lemma}\label{lem:sup-i_k}
$$
\sup_{k\in\N} \mathbb{E}[\mathbf{i}_k]<\infty.
$$
\end{Lemma}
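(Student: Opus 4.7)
The plan is to identify $\i{k}$ with a last-visit time of the random walk starting at $X_{\e{k-1}}$ and conditioned to stay in its cone, and then to control the expectation of this last-visit time uniformly in the starting vertex using the derivative of the Green function bounded in Lemma \ref{lem:sup-G'}.

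First, I would fix $k\geq 1$ and condition on $X_{\e{k-1}}=x$. Since the walk is nearest neighbour, one has $\Vert x\Vert=k-1$, and by the definition of $\e{k-1}$ the post-$\e{k-1}$ trajectory $(Y_m)_{m\geq 0}:=(X_{\e{k-1}+m})_{m\geq 0}$ stays inside $C(x)$ forever. Because $\Vert X_n\Vert\geq k$ for every $n\geq \e{k}$, the instant $\e{k}-1$ is precisely the last time at which $Y_m=x$, whence $\i{k}=L+1$ with $L:=\sup\{m\geq 0:Y_m=x\}$, which is finite almost surely by transience.

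The main obstacle is to describe the conditional law of $(Y_m)$ given $\{\e{k-1}=n,\,X_n=x\}$, since $\e{k-1}$ is not a stopping time. The way around this is the observation that, on the event $\{X_n=x\}$, the condition $\e{k-1}=n$ decomposes as $A\cap B$, where $A\in\sigma(X_0,\dots,X_{n-1})$ records that $n$ is the first index $>\e{k-2}$ starting from which $\Vert X_m\Vert\geq k-1$ for every $m$, while $B:=\{X_{n+m}\in C(x)\text{ for all }m\geq 0\}$ lies in the post-$n$ $\sigma$-algebra and, given $X_n=x$, has probability $1-\xi_{\neg\tau(x)}$ by the ordinary Markov property. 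Hence $(Y_m)$ has the law of a random walk $(Z_m)$ with $Z_0=x$ conditioned on $B$, and
\begin{equation*}
\E\bigl[\i{k}\mid X_{\e{k-1}}=x\bigr]=\frac{\E_x\bigl[(L+1)\mathds{1}_B\bigr]}{1-\xi_{\neg\tau(x)}}\leq\frac{\E_x[L+1]}{1-\max\{\xi_1,\xi_2\}}.
\end{equation*}

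Finally, writing $L+1=\sum_{m\geq 0}\mathds{1}_{\{\exists m'\geq m:\,Z_{m'}=x\}}$, a union bound combined with Fubini yields $\E_x[L+1]\leq G(x,x|1)+G'(x,x|1)$. Lemma \ref{lem:sup-G'} gives $\sup_{x\in V}G'(x,x|1)<\infty$, and the same reasoning used in its proof shows $\sup_{x\in V}U(x,x|1)<1$, so that (\ref{equ:Green2}) also yields $\sup_{x\in V}G(x,x|1)<\infty$. Together with $\max\{\xi_1,\xi_2\}<1$, this produces a bound on $\E[\i{k}\mid X_{\e{k-1}}=x]$ that is uniform in both $x$ and $k$, and hence $\sup_{k\in\N}\E\,\i{k}<\infty$.
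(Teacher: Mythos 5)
Your proof is correct, but it follows a genuinely different route from the paper. The paper decomposes $\E\,\i{k}$ directly by conditioning on $X_{\e{k-1}}=x$, $X_{\e{k}}=y$ and the elapsed time, then rewrites the result in terms of a power series $\gamma_{\tau(x)}(z)$ whose radius of convergence strictly exceeds $1$ --- a fact imported from the proof of Proposition~3.2 in \cite{gilch:07}. You instead observe that $\i{k}-1$ is exactly the last time the shifted trajectory visits $X_{\e{k-1}}$, handle the conditioning on the (non-stopping-time) event $\{\e{k-1}=n,\,X_n=x\}$ via the pre/post factorisation $A\cap B$ together with the Markov property, and then bound the expected last-visit time by $\sum_{m\geq 0}\P_x[\exists m'\geq m: Z_{m'}=x]\leq G(x,x|1)+G'(x,x|1)$, reducing everything to Lemma~\ref{lem:sup-G'}. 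This is more self-contained (no appeal to the $\gamma$-series from \cite{gilch:07}) and makes the source of uniformity --- $\sup_x G'(x,x|1)<\infty$ and $\sup_x U(x,x|1)<1$ --- transparent, whereas the paper's route keeps the combinatorics of the exit step explicit and feeds more naturally into the companion estimate in Lemma~\ref{lem:sup-Ef_k}.

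One small slip: the probability of $B=\{X_{n+m}\in C(x)\ \forall m\geq 0\}$ given $X_n=x$ is $1-\xi_{\tau(x)}$, not $1-\xi_{\neg\tau(x)}$, since the paper's convention is $\xi_i=\P_x[\exists n: X_n\notin C(x)]$ for $\tau(x)=i$. This does not affect your conclusion because you immediately bound the denominator below by $1-\max\{\xi_1,\xi_2\}>0$, which dominates either choice. Also note that for $k=1$ one has $x=o$, $C(o)=V$ and $\P_o[B]=1$, so the formula degenerates harmlessly.
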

\begin{proof}
Let  be $k\in\N$. First, recall that, by definition, $X_{\e{k}-1}\notin C(X_{\e{k}})$ and that $\delta(X_{\e{k-1}})\neq \delta(X_{\e{k}})$.
We decompose $\mathbf{i}_k$ according to the values of $X_{\e{k-1}}$ and $X_{\e{k}}$:
\begin{eqnarray*}
 \mathbb{E}[\mathbf{i}_k] &=& \sum_{n\in\N} \sum_{x,y\in V} \P\bigl[ X_{\e{k-1}}=x,X_{\e{k}}=y, \e{k}-\e{k-1}=n\bigr] \cdot n \\
 &=& \sum_{n,\ell\in \N}  \sum_{\substack{x\in V:\\ \Vert x\Vert=k-1}}\sum_{\substack{y\in C(x):\\ \Vert y\Vert=k}}  \P\left[\begin{array}{c}
 X_{\ell-1}\notin C(x),X_{\ell}=x,\\
 \forall m\in\{1,\dots,n\}: X_{\ell+m}\in C(x),X_{\ell+n-1}\notin C(y),\\
 X_{\ell+n}=y, \forall t>\ell+n: X_t\in C(y)
 \end{array}\right]\cdot n \\
 &=&  \sum_{n,\ell\in \N}  \sum_{\substack{x\in V:\\ \Vert x\Vert=k-1}} \P\bigl[X_{\ell-1}\notin C(x),X_{\ell}=x\bigr] \cdot \sum_{\substack{y\in C(x):\\ \Vert y\Vert=k}} k_{\t(x)}^{(n)}(x^{-1}y)\cdot (1-\xi_{\t(y)})\cdot n \\
 &=& \sum_{\ell\in\N} \sum_{\substack{x\in V:\\ \Vert x\Vert=k-1}} \P[X_{\ell-1}\notin C(x),X_{\ell}=x] \cdot (1-\xi_{\neg \t(x)}) \\
 &&\quad \cdot \frac{\partial}{\partial z}\underbrace{ \biggl[ \sum_{n\geq 1}\sum_{y\in (V_1^\times\cup V_2^\times)\setminus V_{\t(x)}} k_{\t(x)}^{(n)}(y)\cdot  z^n \biggr]}_{=:\gamma_{\t(x)}(z)}\Biggl|_{z=1}.
\end{eqnarray*}
The power series $\gamma_{\t(x)}(z)$ has radius of convergence strictly bigger than $1$; see \cite[Proof of Prop. 3.2]{gilch:07}, where the exactly same power series are denoted by $\gamma_{i,j}(z)$. Hence,
\begin{eqnarray*}
 \mathbb{E}[\mathbf{i}_k] & \leq &\sum_{\ell\in\N}\sum_{\substack{x\in V:\\ \Vert x\Vert=k-1}} \P[X_{\ell-1}\notin C(x),X_{\ell}=x] \cdot (1-\xi_{\t(x)}) \cdot \frac{1-\xi_{\neg\t(x)}}{1-\xi_{\t(x)}} \cdot \max\{ \gamma'_{1}(1),\gamma'_{2}(1)\} \\
 & = & \underbrace{\sum_{x\in V}\P[X_{\e{k-1}}=x]}_{=1} \cdot \max\biggl\lbrace\frac{1-\xi_1}{1-\xi_2},\frac{1-\xi_2}{1-\xi_1}\biggr\rbrace \cdot \max\{ \gamma'_{1}(1),\gamma'_{2}(1)\}< \infty.
\end{eqnarray*}
This proves the claim.
\end{proof}

The following lemma will be needed to ensure that the expected increase of the range by visits of elements in $C(X_{\e{k-1}})$ up to time $\e{k}$  is uniformly bounded.
\begin{Lemma}\label{lem:sup-Ef_k}
$$
\sup_{k\in\N} \mathbb{E}\bigl[|\mathrm{supp}(\psi_k)|\bigr]<\infty.
$$
\end{Lemma}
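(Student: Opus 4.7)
The plan is to dominate $|\mathrm{supp}(\psi_k)|$ by the total time the walk spends inside the cone $C(X_{\e{k-1}})$ up to time $\e{k}$, and to bound this time uniformly in $k$ by a generating-function computation modelled on the proof of Lemma \ref{lem:sup-i_k}. Since every $X_n$ with $n\in[\e{k-1},\e{k}]$ already lies in $C(X_{\e{k-1}})$ and the shift $X_{\e{k-1}}^{-1}$ is a bijection on $C(X_{\e{k-1}})$, one obtains
\[
|\mathrm{supp}(\psi_k)| \leq T_a + \mathbf{i}_k + 1, \qquad T_a := \#\bigl\{n<\e{k-1}\mid X_n\in C(X_{\e{k-1}})\bigr\}.
\]
Lemma \ref{lem:sup-i_k} already provides a uniform bound for $\mathbb{E}\mathbf{i}_k$, so the task reduces to establishing $\sup_k \mathbb{E}T_a<\infty$.

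To handle $\mathbb{E}T_a$, I would condition on the identity of $x=X_{\e{k-1}}$. Since the only gate between $V\setminus C(x)$ and $C(x)$ is the vertex $x$ itself, every stay of the walk inside $C(x)$ strictly before $\e{k-1}$ is a non-committing excursion that begins and ends at $x$. A strong Markov decomposition at the visits to $x$ then yields
\[
\mathbb{E}[T_a;\, X_{\e{k-1}}=x] \;=\; \bar\sigma_x\cdot \mathbb{E}[N_x^{nt};\, X_{\e{k-1}}=x],
\]
where $\bar\sigma_x:=\mathbb{E}_x[\tau_{\mathrm{leave}}\mid\tau_{\mathrm{leave}}<\infty]$ is the expected length of a non-committing excursion starting at $x$ (with $\tau_{\mathrm{leave}}$ the first exit time from $C(x)$) and $N_x^{nt}$ counts the non-committing visits to $x$. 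The factor $\bar\sigma_x$ is controlled by an exit-time power series of exactly the form of the series $\gamma_{\tau(x)}(z)$ appearing in the proof of Lemma \ref{lem:sup-i_k}; \cite[Prop.~3.2]{gilch:07} ensures that its radius of convergence is strictly greater than $1$, so differentiation at $z=1$ gives $\sup_x\bar\sigma_x<\infty$. A further strong Markov step produces
\[
\mathbb{E}[N_x^{nt};\, X_{\e{k-1}}=x] \leq \xi_{\tau(x)}\cdot G(o,x),
\]
using that the conditional probability a given visit to $x$ is non-committing is bounded above by $\xi_{\tau(x)}<1$. Summing over $x$ with $\Vert x\Vert=k-1$,
\[
\mathbb{E}T_a \leq \bigl(\sup_x\bar\sigma_x\bigr)\cdot \max\{\xi_1,\xi_2\}\cdot \sum_{x:\Vert x\Vert=k-1} G(o,x).
\]

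The main obstacle I anticipate is the residual sum: one has to establish $\sup_{m\geq 0}\sum_{x:\Vert x\Vert=m} G(o,x)<\infty$, i.e.\ that the expected total sojourn time of the walk on any word-length level is uniformly bounded in the level. This ought to follow from the positive drift $\ell>0$ of \cite[Thm.~3.3]{gilch:07}: once the walk first exceeds level $m$, the probability that it ever returns below level $m$ is bounded above by $\max\{\xi_1,\xi_2\}<1$, so the number of excursions back to level $m$ is stochastically dominated by a geometric random variable, while each such excursion contributes only a uniformly bounded expected sojourn at level $m$ via the same generating-function estimate that controls $\bar\sigma_x$. Assembling the three ingredients then delivers $\sup_k \mathbb{E}|\mathrm{supp}(\psi_k)|<\infty$.
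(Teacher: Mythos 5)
Your split of the time spent in $C(X_{\e{k-1}})$ into a piece strictly before $\e{k-1}$ and the piece from $\e{k-1}+1$ to $\e{k}$ is the same as the paper's, and you handle the second piece exactly as the paper does via Lemma~\ref{lem:sup-i_k}. The divergence is in the first piece, and there your route is genuinely different and also where the gaps lie.

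The paper conditions on $x=X_{\e{k-1}}$ and bounds the expected sojourn in $C(x)$ before $\e{k-1}$ by
$\sum_x F(o,x)\,(1-\xi_{\tau(x)})\,G'(x,x|1)\leq\sup_x G'(x,x|1)$, using that
$\sum_x F(o,x)(1-\xi_{\tau(x)})$ is a probability $\leq 1$ and that $\sup_x G'(x,x|1)<\infty$ by Lemma~\ref{lem:sup-G'}. The quantity $G'(x,x|1)=\sum_n n\,p^{(n)}(x,x)$ absorbs the total time in one stroke, so no separate excursion count is needed. Your proposal instead factorises $\mathbb{E}[T_a;X_{\e{k-1}}=x]$ into (expected excursion length) times (expected excursion count), and then reduces to the extra claim $\sup_m\sum_{x:\Vert x\Vert=m}G(o,x)<\infty$. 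That claim is true, but it is an additional lemma the paper never needs, and your sketch of it has a subtle flaw: in a free product with some $|V_i|\geq 3$, the walk can step from a level-$m$ vertex directly to a \emph{different} level-$m$ vertex without ever dropping below level $m$, so ``excursions back to level $m$'' are not the right decomposition unit. The repair is to count first visits to \emph{new} level-$m$ vertices: at each such fresh visit to $x$, the walk commits to $C(x)$ with probability $1-\xi_{\tau(x)}\geq 1-\max\{\xi_1,\xi_2\}$, whence the number of distinct level-$m$ vertices visited is geometrically dominated; combining with $\sup_x G(x,x)<\infty$ then gives the level sum bound. But as stated, the geometric-domination step would fail.

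Two further inaccuracies: the $\gamma_{\tau(x)}(z)$ series of Lemma~\ref{lem:sup-i_k} tracks the single \emph{committing} excursion from $X_{\e{k-1}}$ to $X_{\e{k}}$, not the non-committing ones, so it does not directly control $\bar\sigma_x$; a uniform bound on $\bar\sigma_x$ would instead come from a derivative-of-$U(x,x|\cdot)$ estimate like the one in the proof of Lemma~\ref{lem:sup-G'}. And the factorisation $\mathbb{E}[T_a;X_{\e{k-1}}=x]=\bar\sigma_x\,\mathbb{E}[N_x^{nt};X_{\e{k-1}}=x]$ needs a Wald-type justification because conditioning on $X_{\e{k-1}}=x$ correlates the excursion count with the excursion lengths; an inequality would suffice, but this should be said. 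None of these is fatal, but together they make the route considerably longer and less self-contained than the paper's, whose use of $G'(x,x|1)$ is precisely the device that sidesteps all of these counting issues at once.
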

\begin{proof}
Let  be $k\in\N$. Denote by $S_{k-1}$ the random time of the first visit to $X_{\e{k-1}}$. Then:
$$
\mathbb{E}\bigl[|\mathrm{supp}(\psi_k)|\bigr] \leq
\mathbb{E}\left[\sum_{j=S_{k-1}}^{\e{k-1}} \mathds{1}_{C(X_{\e{k-1}})}(X_j) \right] +
\mathbb{E}\left[\sum_{j=\e{k-1}+1}^{\e{k}} \mathds{1}_{C(X_{\e{k-1}})}(X_j) \right].
$$
We show that both expectations on the right hand side are uniformly bounded. To this end, we make a decomposition according to the values of $X_{\e{k-1}}$, $S_{k-1}$ and $\e{k-1}$.
For $k\in\N$, write $D_k:=\{x\in V \mid \Vert x\Vert=k\}$. Then we obtain:
\begin{eqnarray*}
&&\mathbb{E}\left[\sum_{j=S_{k-1}}^{\e{k-1}} \mathds{1}_{C(X_{\e{k-1}})}(X_j) \right] \\
&=& \sum_{m,n\in\N_0} \sum_{x\in D_{k-1}} \sum_{{x_1,\dots,x_{n-1}\in V:\atop  x_{n-1}\notin C(x)}}\P\left[ \begin{array}{c}X_m=x, \forall j<m: X_j\neq x,\\
X_{m+1}=x_1,\dots,X_{m+n-1}=x_{n-1},X_{m+n}=x, \\
\forall k>m+n: X_k\in C(x)
\end{array}
\right]\\
&&\quad \cdot \left(\mathds{1}_{C(x)}(x)+\sum_{i=1}^{n-1} \mathds{1}_{C(x)}(x_i)\right)\\
&\leq &  \sum_{m,n\in\N_0} \sum_{x\in D_{k-1}} \sum_{x_1,\dots,x_{n-1}\in V}\P\left[ \begin{array}{c}X_m=x, \forall j<m: X_j\neq x,\\
X_{m+1}=x_1,\dots,X_{m+n-1}=x_{n-1},X_{m+n}=x
\end{array}
\right]\\
&&\quad \cdot \P_x\bigl[\forall k\geq 1: X_k\in C(x)\bigr]  \cdot n
\end{eqnarray*}
\begin{eqnarray*}
&=&  \sum_{m\in\N_0} \sum_{x\in D_{k-1}} \P\bigl[  \forall j<m: X_j\neq x, X_m=x\bigr]\\
&&\quad  \cdot \sum_{n\in\N_0} 
\sum_{x_1,\dots,x_{n-1}\in V}\P_x\left[ \begin{array}{c}X_{1}=x_1,\dots, X_{n-1}=x_{n-1},\\ X_{n}=x
\end{array}
\right]\cdot (1-\xi_{\t(x)})  \cdot n\\
&= &  \sum_{m\in\N_0} \sum_{x\in D_{k-1}} \P\bigl[  \forall j<m: X_j\neq x, X_m=x\bigr] \cdot (1-\xi_{\t(x)})\cdot \sum_{n\in\N_0} n\cdot \P_x\left[X_{n}=x\right] \\
&=& \underbrace{\sum_{x\in D_{k-1}} F(o,x)\cdot (1-\xi_{\t(x)})}_{=\P[S_{k-1}=\e{k-1}]\leq 1}  \cdot G'(x,x\vert 1)
\leq \sup_{x\in V} G'(x,x\vert 1) <\infty,
\end{eqnarray*}
where the last inequality is an easy consequence of Lemma \ref{lem:sup-G'}. Due to convexity of the functions $G(x,x|z)$, $x\in V$, we must have $G'(x,x|1)\leq \sup_{x\in V} G(x,x|R_0)/(R_0-1)<\infty$.

By Lemma \ref{lem:sup-i_k},
$$
\mathbb{E}\left[\sum_{j=\e{k-1}+1}^{\e{k}} \mathds{1}_{C(X_{\e{k-1}})}(X_j) \right] 
\leq \mathbb{E}\left[\e{k}-\e{k-1}\right] = \mathbb{E}[\mathbf{i}_k] \leq  \sup_{k\in \N} \mathbb{E}[\mathbf{i}_k]<\infty.
%
$$
Thus, we have shown that
$$
\sup_{k\in\N} \mathbb{E}\bigl[|\mathrm{supp}(\psi_k)|\bigr] \leq \sup_{x\in V} G'(x,x|1)  + \sup_{k\in \N} \mathbb{E}[\mathbf{i}_k]<\infty.
$$
\end{proof}

\begin{Cor}\label{lem:conditional-Eik}
Let  $(g,f)\in \mathcal{D}$ be with $\P\bigl[(\W{1},\psi_1)=(g,f)\bigr]>0$. Then:
$$
\sup_{k\in\N} \mathbb{E}\bigl[|\mathrm{supp}(\psi_k)|\, \bigl|\, (\W{1},\psi_1)=(g,f)\bigr]<\infty.
$$
\end{Cor}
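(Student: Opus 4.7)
The plan is to deduce this conditional bound directly from the unconditional bound in Lemma \ref{lem:sup-Ef_k}, using nothing more than the elementary inequality that for any nonnegative random variable $Y$ and event $A$ with $\P[A]>0$,
$$
\mathbb{E}[Y\mid A] = \frac{\mathbb{E}[Y\cdot \mathds{1}_A]}{\P[A]} \leq \frac{\mathbb{E}[Y]}{\P[A]}.
$$

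First I would apply this with $Y = |\mathrm{supp}(\psi_k)|$ and $A = \{(\W{1},\psi_1)=(g,f)\}$ to obtain
$$
\mathbb{E}\bigl[|\mathrm{supp}(\psi_k)|\,\bigl|\,(\W{1},\psi_1)=(g,f)\bigr] \leq \frac{\mathbb{E}\bigl[|\mathrm{supp}(\psi_k)|\bigr]}{\P\bigl[(\W{1},\psi_1)=(g,f)\bigr]}.
$$
The denominator is a fixed positive constant (since $(g,f)$ is fixed, with $\P[(\W{1},\psi_1)=(g,f)]>0$ by hypothesis), and does not depend on $k$.

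Next I would take the supremum over $k\in\N$ on both sides. Since the denominator is independent of $k$, the sup passes to the numerator, giving
$$
\sup_{k\in\N}\mathbb{E}\bigl[|\mathrm{supp}(\psi_k)|\,\bigl|\,(\W{1},\psi_1)=(g,f)\bigr] \leq \frac{\sup_{k\in\N}\mathbb{E}\bigl[|\mathrm{supp}(\psi_k)|\bigr]}{\P\bigl[(\W{1},\psi_1)=(g,f)\bigr]}.
$$
By Lemma \ref{lem:sup-Ef_k} the numerator is finite, and hence the whole right-hand side is finite, which is the desired conclusion.

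There is essentially no obstacle here: the argument is just that the conditional expectation is controlled by the unconditional one up to the (fixed) factor $1/\P[(\W{1},\psi_1)=(g,f)]$. The only thing to verify carefully is that the hypothesis $\P[(\W{1},\psi_1)=(g,f)]>0$ is used exactly to guarantee that this denominator is strictly positive, so the resulting constant, although dependent on $(g,f)$, is genuinely finite.
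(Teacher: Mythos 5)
Your proof is correct and uses essentially the same idea as the paper: the paper invokes the law of total expectation and then argues by contradiction that one term of the nonnegative sum cannot be unbounded, which is exactly the content of the inequality $\mathbb{E}[Y\mid A]\leq \mathbb{E}[Y]/\P[A]$ that you apply directly. Your version is a slightly cleaner, contradiction-free formulation of the same elementary observation.
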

\begin{proof}
Conditioning on the event $[(\W{1},\psi_1)=(g,f)]$ gives
$$
\mathbb{E}\bigl[|\mathrm{supp}(\psi_k)|\bigr] \geq  \P\bigl[ (\W{1},\psi_1)=(g,f)\bigr] 
\cdot \mathbb{E}\bigl[|\mathrm{supp}(\psi_k)|\, \bigl|\, (\W{1},\psi_1)=(g,f)\bigr].
$$
If $\mathbb{E}\bigl[|\mathrm{supp}(\psi_k)|\, \bigl|\, (\W{1},\psi_1)=(g,f)\bigr]$ would be unbounded in $k$, then also $\mathbb{E}\bigl[|\mathrm{supp}(\psi_k)|\bigr]$ would be unbounded, a contradiction to Lemma \ref{lem:sup-Ef_k}. This proves the claim.
\end{proof}

Now we can prove the following crucial property of the process $(\W{k},\psi_k)_{k\in\N}$:

\begin{Prop}\label{prop:positive-recurrence}
$(\W{k},\psi_k)_{k\in\N}$ is positive-recurrent.
\end{Prop}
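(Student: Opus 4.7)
The plan is to prove positive recurrence by combining the already established irreducibility with uniform tightness of the distributions of $(\W{k},\psi_k)$ on the countable state space $\mathcal{D}$. The natural size function is $(g,f)\mapsto |\mathrm{supp}(f)|$, whose expectation is uniformly bounded in $k$ by Lemma~\ref{lem:sup-Ef_k}; however, this alone does not confine $(\W{k},\psi_k)$ to a finite subset of $\mathcal{D}$, so one must also control the \emph{spatial} extent of $\mathrm{supp}(\psi_k)$ inside $\mathcal{X}$.

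For each $M\in\N$, let $B_M:=\{x\in V : |x|\leq M\}$, which is finite by local finiteness of the factor graphs, and set
\[
K_M:=\bigl\{(g,f)\in\mathcal{D} : |\mathrm{supp}(f)|\leq M,\ \mathrm{supp}(f)\subseteq B_M\bigr\},
\]
a finite subset of $\mathcal{D}$. Markov's inequality together with Lemma~\ref{lem:sup-Ef_k} immediately yields $\P[|\mathrm{supp}(\psi_k)|>M]\leq C_1/M$. For the spatial tail, I would argue that any element of $\mathrm{supp}(\psi_k)$ corresponds, in the shifted coordinates, to a position reached by the walk from $o$ via time steps spent inside the cone $C(X_{\e{k-1}})$ during the interval $[S_{X_{\e{k-1}}},\e{k}]$. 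Hence $\mathrm{supp}(\psi_k)\subseteq B_T(o)$, where $T$ denotes the total number of such time steps, and the decomposition exhibited in the proof of Lemma~\ref{lem:sup-Ef_k} together with Lemmas~\ref{lem:sup-G'} and~\ref{lem:sup-i_k} gives $\sup_k \mathbb{E}T<\infty$; a second application of Markov's inequality then produces $\P[\mathrm{supp}(\psi_k)\not\subseteq B_M]\leq C_2/M$. Combining both tails, $\sup_{k\in\N}\P[(\W{k},\psi_k)\notin K_M]\to 0$ as $M\to\infty$.

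Uniform tightness forces positive recurrence. Since $(\W{k},\psi_k)_{k\in\N}$ is irreducible on the countable set $\mathcal{D}$, the standard trichotomy leaves only transience, null recurrence, or positive recurrence. In either of the first two cases, accounting for the period~$2$, one has $\frac{1}{n}\sum_{k=1}^n\P[(\W{k},\psi_k)\in F]\to 0$ for every finite $F\subset\mathcal{D}$. But picking $M$ so that $\sup_k \P[(\W{k},\psi_k)\notin K_M]<1/2$ shows the Cesàro averages over the finite set $F=K_M$ are bounded below by $1/2$ for every $n$, a contradiction. Hence $(\W{k},\psi_k)_{k\in\N}$ is positive recurrent. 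The main obstacle is the spatial part of tightness: a bound on $|\mathrm{supp}(\psi_k)|$ alone is insufficient to land in a finite set of states, so one must leverage the integrable control on the total sojourn time of the walk in the cone $C(X_{\e{k-1}})$, which combines Lemmas~\ref{lem:sup-G'},~\ref{lem:sup-i_k}, and~\ref{lem:sup-Ef_k}.
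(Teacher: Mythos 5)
Your proposal takes a genuinely different route from the paper. You aim to establish uniform tightness of the laws of $(\W{k},\psi_k)_{k\in\N}$ on $\mathcal{D}$, concentrating mass on a fixed finite set $K_M$, and then invoke the standard fact that for an irreducible transient or null-recurrent chain the Ces\`aro averages $\frac1n\sum_{k\le n}\P[(\W{k},\psi_k)\in F]$ vanish for every finite $F$. The paper instead fixes one state $(g,f)$ with $\mathrm{supp}(f)=\{o,g\}$, uses Markov's inequality together with Corollary~\ref{lem:conditional-Eik} to show that with uniformly positive probability $\delta$ the chain is in \emph{some} state of support size $\le M+1$, and then constructs an explicit path of fixed length $M$ from \emph{any} such state back to $(g,f)$ with transition probability bounded below. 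This yields $\liminf_n q^{(n)}((g,f),(g,f))\ge\varepsilon_0^M\delta>0$ directly, without ever trapping the chain inside a finite subset of $\mathcal{D}$.

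The difference matters. The paper's argument needs only that $|\mathrm{supp}(\psi_k)|$ be small with uniformly positive probability; the collection of small-support states may be infinite, because the length-$M$ return path is produced uniformly from each of them. Your argument needs the candidate set $K_M=\{(g,f): |\mathrm{supp}(f)|\le M,\ \mathrm{supp}(f)\subseteq B_M\}$ to be \emph{finite}, which you secure by invoking local finiteness of the factor graphs. But the paper allows $V_1,V_2$ to be countably infinite and nowhere imposes bounded degree; in that generality $B_1$ can already be infinite, hence so is every $K_M$, and your intended contradiction does not arise. That is a genuine gap: your proof works under an additional hypothesis (locally finite $\mathcal{X}_1,\mathcal{X}_2$) not present in the paper. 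Your observation that controlling $|\mathrm{supp}(\psi_k)|$ alone is insufficient and that one must also bound the spatial extent via the cone sojourn time $T$ (which is exactly the quantity bounded in the proof of Lemma~\ref{lem:sup-Ef_k}, using Lemmas~\ref{lem:sup-G'} and~\ref{lem:sup-i_k}) is correct and well taken — but it only reduces the problem to one solvable under local finiteness, not in the paper's stated generality.
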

\begin{proof}
Since $(\W{k},\psi_k)_{k\in\N}$ is irreducible it is sufficient to show that  
$$
\liminf_{n\to\infty} q^{(n)}\bigl((g,f),(g,f)\bigr)>0 \quad \textrm{for some } (g,f)\in\mathcal{D}; 
$$
this follows from the fact that transience implies $\lim_{n\to\infty} q^{(n)}\bigl((g,f),(g,f)\bigr)=0$, and the rest follows from Feller \cite[Theorem on p. 389]{fellerI}.
\par
Let $(g,f)\in\mathcal{D}$ be with $g\in V_1^\times$ and $f=\mathds{1}_{\{o,g\}}$. 
Choose $M\in\N$ even such that 
$$
\mathbb{E}\bigl[|\mathrm{supp}(\psi_k)|\, \bigl|\, (\W{1},\psi_1)=(g,f)\bigr]\leq M \quad \textrm{for all } k\in\N, 
$$ 
which is possible due to Corollary \ref{lem:conditional-Eik}. Then there is some $\delta >0$ such that 
$$
p_k:=\mathbb{P}\bigl[ |\mathrm{supp}(\psi_k)|\leq M+1\,\bigl|\, (\W{1},\psi_1)=(g,f)\bigr]\geq \delta
$$ 
for all $k\in\N$: indeed, assume that there is an index sequence $(n_k)_{k\in\N}$ such that $p_{n_k}<1/k$; then 
$$
\mathbb{E}\bigl[|\mathrm{supp}(\psi_{n_k})|\, \bigl|\, (\W{1},\psi_1)=(g,f)\bigr]\geq \Bigl(1-\frac1k\Bigr)\cdot (M+1),
$$ 
a contradiction to $\mathbb{E}\bigl[|\mathrm{supp}(\psi_k)|\, \bigl|\, (\W{1},\psi_1)=(g,f)\bigr]\leq M$ for $k$ large enough. Hence, for each $m\in\N$  we can choose a set $A_m$ 
of elements $(\bar g, \bar f)\in\mathcal{D}$ with $|\mathrm{supp} (\bar f)|\leq M+1$ such that
$$
\sum_{(\bar g, \bar f)\in A_m}q^{(m)}\bigl((g,f),(\bar g, \bar f)\bigr)\geq \delta.
$$
Now let  $(\bar g,\bar f)\in\mathcal{D}$ be with $\bar g\in V_{1}^\times$ and  $|\mathrm{supp} (\bar f)|\leq M+1$, and set $(\bar g_0,\bar f_0):=(\bar g,\bar f)$ and $(\bar g_{M},\bar f_{M}):=(g, f)$. Choose $\bar g'\in V_1^\times$, $\bar g''\in V_2^\times$ with $p_1(o_1,\bar g')>0$ and $p_2(o_2,\bar g'')>0$. Set $\bar g_1:=\bar g_3:=\dots :=\bar g_{M-1}:=\bar g''$ and $\bar g_2:=\bar g_4:=\dots :=\bar g_{M-2}:=\bar g'$,
and define $\bar f_i=\mathds{1}_{B_i}$ with $B_0:=\mathrm{supp}(\bar f)$ and
$$
B_i:= \bar g_i^{-1} \bigl( B_{i-1}\cap C(\bar g_i)\bigr) \quad \textrm{ for } i\in\{1,\dots,M\}.
$$
The idea behind this  construction is to obtain a sequence of elements $(\bar g_i,\bar f_i)$ which allows to reach $(g,f)$ after $M$ steps since
$$
\mathbb{P}\bigl[ (\mathbf{W}_i,\psi_i)=(\bar g_i,\bar f_i)\,\bigl|\,  (\mathbf{W}_{i-1},\psi_{i-1})=(\bar g_{i-1},\bar f_{i-1})\bigr] 
\geq \frac{1-\xi_{\delta(\bar g_i)}}{1-\xi_{\delta(\bar g_{i-1})}} p(o,\bar g_i) >0.
$$ 
In particular, $|\mathrm{supp}(\bar f_i)|$ is reduced successively in each step, that is,
$$
|\mathrm{supp}(\bar f_0)|\geq |\mathrm{supp}(\bar f_1)|\geq |\mathrm{supp}(\bar f_2)|\geq \ldots \geq |\mathrm{supp}(\bar f_M)|=2.
$$
We set $\varepsilon_0:=\min\{\alpha\cdot  p_1(o_1,\bar g'),(1-\alpha)\cdot p_2(o_2,\bar g'')\}$ and obtain by construction of $(\bar g_i,\bar f_i)$:
\begin{eqnarray*}
 && \prod_{i=1}^{M} q\bigl( (\bar g_{i-1},\bar f_{i-1}),(\bar g_i,\bar f_i)\bigr) \\
 &=&\prod_{j=1}^{M/2} q\bigl( (\bar g_{2j-2},\bar f_{2j-2}),(\bar g_{2j-1},\bar f_{2j-1})\bigr)
 \cdot q\bigl( (\bar g_{2j-1},\bar f_{2j-1}),(\bar g_{2j},\bar f_{2j})\bigr)\\
 &=&  \prod_{j=1}^{M/2} \frac{1-\xi_2}{1-\xi_1} p(o,\bar g_{2j-1}) \cdot  \frac{1-\xi_1}{1-\xi_2} p(o,\bar g_{2j})
 \geq \varepsilon_0^{M}.
\end{eqnarray*}
Finally, we obtain for  $n\in\N$ even with $n\geq M+1$: 
\begin{eqnarray*}
q^{(n)}\bigl( (g,f),(g,f)\bigr) &=& \sum_{(\bar g, \bar f)\in\mathcal{D}} q^{(n-M)}\bigl( (g,f),(\bar g,\bar f)\bigr) \cdot q^{(M)}\bigl( (\bar g,\bar f),(g,f)\bigr)\\
&\geq & \underbrace{\sum_{(\bar g, \bar f)\in A_{n-M}} q^{(n-M)}\bigl( (g,f),(\bar g,\bar f)\bigr)}_{\geq \delta} \cdot \underbrace{q^{(M)}\bigl( (\bar g,\bar f),(g,f)\bigr)}_{\geq \varepsilon_0^{M}} 
\geq  \varepsilon_0^{M} \cdot \delta.
\end{eqnarray*}
Thus, $q^{(2n)}\bigl( (g,f),(g,f)\bigr)$, $n\geq 1+M/2$, is uniformly bounded away from zero, which proves the proposed claim. 
\end{proof}

\subsection{Existence of the Asymptotic Range}
\label{subsec:existence-range}

In this subsection we finally derive existence of the asymptotic range. Recall that the support of the random functions $\psi_k$, $k\in\N$, contain the information of the states (shifted by $X_{\e{k-1}}^{-1}$) visited by the random walk in $C(X_{\e{k-1}})$ up to time $\e{k}$. By the following definitions we decompose these sets one step further. For $k\in\mathbb{N}$, define 
$$
\tR_k:= \bigl| \mathrm{supp}(\psi_k)\setminus C(\mathbf{W}_k)\bigr|
$$ 
and the overhang 
$$
\mathbf{O}_k:=\bigl| \mathrm{supp}(\psi_k)\cap C(\mathbf{W}_k)\bigr|.
$$ 
That is, $\tR_k$ describes the final number of states  in $C(X_{\e{k-1}})\setminus C(X_{\e{k}})$ (shifted by $X_{\e{k-1}}^{-1}$) visited by the random walk, while $\mathbf{O}_k$ describes the number of states which are ``passed'' to $\psi_{k+1}$. 

\begin{Ex}
We continue Example \ref{ex:free-product2}. In view of the given sample path we have $\tR_3=\{o\}$, which counts the element $ab$,  and $\mathbf{O}_3=\{a\}$, which corresponds to the element $aba$. Furthermore, $\tR_4=\{o,c,ca\}$ represents the visited elements $aba,abac,abaca$, and $\mathbf{O}_4=\{b\}$ corresponds to the element $abab$, which will be counted later by $\tR_5$.
\end{Ex}

Our aim is to apply the ergodic theorem for positive-recurrent Markov chains to $(\mathbf{W}_k,\psi_k)_{k\in\N}$ and $(\tR_k)_{k\in\N}$, which needs the following lemma:

\begin{Lemma}\label{lem:tildeR-finite}
Let $\pi$ be the invariant probability measure of the Markov chain $(\W{k},\psi_k)_{k\in\N}$. Then:
$$
\tilde{\mathbf{r}}:=\int \tR_1 \,d\pi <\infty \quad \textrm{ and }\quad \int \mathbf{O}_1 \,d\pi <\infty.
$$
\end{Lemma}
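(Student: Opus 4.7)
The plan is to deduce both integral bounds from the uniform bound on $\mathbb{E}[|\mathrm{supp}(\psi_k)|]$ established in Lemma \ref{lem:sup-Ef_k}, via the Cesaro ergodic theorem for positive-recurrent Markov chains combined with Fatou's lemma.

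First I would note the obvious pointwise identity
$$
\tR_k + \mathbf{O}_k \,=\, \bigl|\mathrm{supp}(\psi_k)\setminus C(\W{k})\bigr|+\bigl|\mathrm{supp}(\psi_k)\cap C(\W{k})\bigr| \,=\, \bigl|\mathrm{supp}(\psi_k)\bigr|,
$$
so both non-negative random variables $\tR_k$ and $\mathbf{O}_k$ are dominated by $|\mathrm{supp}(\psi_k)|$. By Lemma \ref{lem:sup-Ef_k} there exists a finite constant $C$ such that
$$
\sup_{k\in\N}\,\mathbb{E}\bigl[\tR_k\bigr]\,\le\,C \quad\textrm{and}\quad \sup_{k\in\N}\,\mathbb{E}\bigl[\mathbf{O}_k\bigr]\,\le\,C.
$$

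Next I would invoke positive-recurrence. Since $(\W{k},\psi_k)_{k\in\N}$ is irreducible and positive-recurrent on the countable state space $\mathcal{D}$ with invariant probability measure $\pi$, the Cesaro form of the ergodic theorem (which does not require aperiodicity and in particular accommodates the period $2$ here) gives, for every state $(g',f')\in\mathcal{D}$,
$$
\lim_{n\to\infty}\frac{1}{n}\sum_{k=1}^{n}\mathbb{P}\bigl[(\W{k},\psi_k)=(g',f')\bigr]\,=\,\pi\bigl(\{(g',f')\}\bigr).
$$
Writing $h(g',f'):=|\mathrm{supp}(f')\setminus C(g')|$ so that $h(\W{k},\psi_k)=\tR_k$, interchanging the (non-negative) countable sum over $\mathcal{D}$ with the $\liminf$ via Fatou's lemma yields
$$
\int \tR_1\,d\pi \,=\, \sum_{(g',f')\in\mathcal{D}} h(g',f')\,\pi\bigl(\{(g',f')\}\bigr) \,\le\, \liminf_{n\to\infty}\frac{1}{n}\sum_{k=1}^{n}\mathbb{E}\bigl[\tR_k\bigr]\,\le\,C\,<\,\infty.
$$
The same argument with $h(g',f'):=|\mathrm{supp}(f')\cap C(g')|$ gives $\int \mathbf{O}_1\,d\pi\le C<\infty$, establishing the claim.

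The only delicate point is the Fatou exchange: one must verify that the function on $\mathcal{D}$ being integrated is non-negative (clear from the definitions) and that the Cesaro limit identifies $\pi$ (standard positive-recurrence theory, e.g., Feller, which is already cited in the preceding proposition). Everything else is a direct appeal to Lemma \ref{lem:sup-Ef_k}, so the lemma follows without further calculation.
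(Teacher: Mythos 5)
Your proof is correct, and it takes a genuinely different route from the paper's. The paper argues by contradiction: it assumes $\int \tR_1\,d\pi=\infty$, invokes the almost-sure (Birkhoff) ergodic theorem for the positive-recurrent chain to conclude $\frac1n\sum_{j=1}^{\mathbf{k}(n)}\tR_j\to\infty$ a.s., and derives a contradiction with the deterministic structural bound $\sum_{j=1}^{\mathbf{k}(n)}\tR_j\le R_n\le n+1$, which holds because the $\tR_j$ count disjoint portions of the range. Notably, the paper does \emph{not} use Lemma~\ref{lem:sup-Ef_k} at this point; that lemma is used earlier only to establish positive recurrence. Your argument is direct rather than by contradiction: you observe the identity $\tR_k+\mathbf{O}_k=|\mathrm{supp}(\psi_k)|$, deduce $\sup_k\E[\tR_k]<\infty$ and $\sup_k\E[\mathbf{O}_k]<\infty$ from Lemma~\ref{lem:sup-Ef_k}, and convert the Cesàro-in-distribution ergodic limit into a bound on $\int\tR_1\,d\pi$ and $\int\mathbf{O}_1\,d\pi$ via Fatou's lemma. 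What your approach buys: it treats $\tR_1$ and $\mathbf{O}_1$ perfectly symmetrically, whereas the paper's contradiction argument relies on the $\tR_j$ decomposing the range disjointly, a feature the $\mathbf{O}_j$ do not share, so the paper's ``completely analogously'' for $\mathbf{O}_1$ is arguably less immediate than your treatment. What the paper's approach buys: it is self-contained in the sense of using only the a.s.\ ergodic theorem and the structural decomposition of the range, without needing to re-derive $\pi$ as a Cesàro limit of marginal distributions or to check the minor technicality that the Cesàro convergence holds under the actual initial law of $(\W{1},\psi_1)$ (not just for a fixed starting state); that last step is fine in your write-up but worth a sentence of justification, e.g.\ by dominated convergence over the initial distribution.
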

\begin{proof}
First, observe that both integrals are well-defined since $\tR_1 \geq 0$ and $\mathbf{O}_1\geq 0$. Assume now for a moment that $\int \tR_1 \,d\pi =\infty$ holds. Then, by the ergodic theorem for positive-recurrent Markov chains and the remarks made at the beginning of Subsection \ref{subsec:exit-time-process}:
$$
\frac1n \sum_{j=1}^{\mathbf{k}(n)} \tR_j = \underbrace{\frac{\e{\mathbf{k}(n)}}{n}}_{\to 1} \underbrace{\frac{\mathbf{k}(n)}{\e{\mathbf{k}(n)}}}_{\to\ell\in(0,1]} \underbrace{\frac{1}{\mathbf{k}(n)}\sum_{j=1}^{\mathbf{k}(n)} \tR_j}_{\to\int \tR_1\,d\pi=\infty} \xrightarrow{n\to\infty} \infty \ \textrm{almost surely.}
$$
On the other hand side, the random variables $\tR_1,\ldots,\tR_{\mathbf{k}(n)}$ decompose the range $\mathbf{R}_n$ into disjoint subsets, namely into the parts contained in $C(X_{\e{j-1}})\setminus C(X_{\e{j}})$, $j\in\{1,\dots,\mathbf{k}(n)\}$. Therefore, $\sum_{j=1}^{\mathbf{k}(n)} \tR_j\leq n$, and we obtain the contradiction  
$$
\frac1n \sum_{j=1}^{\mathbf{k}(n)} \tR_j \leq 1.
$$
Thus, $\int \tR_1 \,d\pi <\infty$. The proof for finiteness of $\int \mathbf{O}_1 \,d\pi$ works completely analogously.
\end{proof}
The next corollary allows us to drop the overhang when considering the asymptotic range.
\begin{Cor}\label{cor:overhead}
$$
\lim_{k\to \infty} \frac{\mathbf{O}_k}{k}=0\quad \textrm{ almost surely}.
$$
\end{Cor}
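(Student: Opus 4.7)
The strategy is a two-step reduction: first apply the ergodic theorem for positive-recurrent irreducible Markov chains to the functional $\mathbf{O}$ on the state space $\mathcal{D}$, and then convert the resulting Ces\`aro convergence into convergence of the individual terms $\mathbf{O}_k/k$ by a standard telescoping argument.

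Concretely, the plan is as follows. Since $(\W{k},\psi_k)_{k\in\N}$ is irreducible and positive-recurrent with invariant probability measure $\pi$, and since the functional $(g,f)\mapsto |\{x\in V: f(x)=1\}\cap C(g)|$ (which equals $\mathbf{O}_k$ when evaluated at $(\W{k},\psi_k)$) is $\pi$-integrable by Lemma \ref{lem:tildeR-finite}, the ergodic theorem for positive-recurrent Markov chains gives
\begin{equation*}
\frac{1}{k}\sum_{j=1}^{k}\mathbf{O}_j \xrightarrow{k\to\infty} \int \mathbf{O}_1 \, d\pi =: c <\infty \quad \textrm{almost surely.}
\end{equation*}
The period $2$ of the chain is no obstruction here, as the ergodic theorem for Ces\`aro averages of integrable functionals along trajectories does not require aperiodicity; only positive recurrence and irreducibility (both already established) are needed.

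Second, writing $S_k:=\sum_{j=1}^{k}\mathbf{O}_j$, I observe that for $k\geq 2$
\begin{equation*}
\frac{\mathbf{O}_k}{k}=\frac{S_k}{k}-\frac{S_{k-1}}{k}=\frac{S_k}{k}-\frac{k-1}{k}\cdot\frac{S_{k-1}}{k-1}.
\end{equation*}
Both $S_k/k$ and $S_{k-1}/(k-1)$ converge almost surely to the same finite constant $c$, and $(k-1)/k\to 1$, so the right hand side tends almost surely to $c-c=0$. This gives $\mathbf{O}_k/k\to 0$ almost surely, which is the claimed statement (interpreting $|\mathrm{supp}(\mathbf{O}_k)|$ in the corollary as $\mathbf{O}_k$, since $\mathbf{O}_k$ is already a cardinality).

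I do not anticipate any serious obstacle: the finiteness $\int \mathbf{O}_1\,d\pi<\infty$ is precisely Lemma \ref{lem:tildeR-finite}, the ergodic theorem is a standard tool, and the passage from Ces\`aro convergence to vanishing of the $k$-th term divided by $k$ is elementary. The only point that deserves an explicit sentence in the write-up is that periodicity does not interfere with the Ces\`aro form of the ergodic theorem.
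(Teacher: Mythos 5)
Your argument is essentially identical to the paper's: invoke the ergodic theorem for positive-recurrent Markov chains together with Lemma \ref{lem:tildeR-finite} to get almost sure Ces\`aro convergence of $\mathbf{O}_j$, then use the telescoping identity $\mathbf{O}_k/k = S_k/k - \frac{k-1}{k}\cdot S_{k-1}/(k-1)$ to conclude that the $k$-th term divided by $k$ vanishes. Your explicit remarks that the notation $|\mathrm{supp}(\mathbf{O}_k)|$ should be read as $\mathbf{O}_k$ (since $\mathbf{O}_k$ is already a cardinality) and that period $2$ does not affect the Ces\`aro form of the ergodic theorem are both correct and, if anything, slightly more careful than the paper.
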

\begin{proof}
By Lemma \ref{lem:tildeR-finite}, we get
$$
\lim_{k\to\infty} \frac{1}{k} \sum_{i=1}^k \mathbf{O}_i = \mathfrak{o}:= \int \mathbf{O}_1\,d\pi <\infty.
$$
Therefore, we obtain the proposed limit as follows:
$$
\lim_{k\to\infty} \frac{\mathbf{O}_k}{k} = \lim_{k\to\infty}\biggl[\underbrace{\frac{1}{k} \sum_{i=1}^k \mathbf{O}_i}_{\to \mathfrak{o}} - \frac{k-1}{k}\underbrace{\frac{1}{k-1} \sum_{i=1}^{k-1} \mathbf{O}_i}_{\to \mathfrak{o}}\biggr] =0 \quad \textrm{almost surely.}
$$
\end{proof}

Finally, we can prove our first main result:
\begin{proof}[Proof of Theorem \ref{thm:range-existence}:]
For $i\in\calI$, denote by $V_i^\ast$ the set of words starting with a letter in $V_i^\times$, and write $C_0:=\bigl(V_{\t(\mathbf{W}_1)}^\ast \setminus C(X_{\e{1}})\bigr)\cup\{o\}$; the complement $\overline{C_0}$ is then the set of words starting with a letter in  $V_j$, where $j\in\calI\setminus \{\t(\W{1})\}$.
Define $\tR_0:=\bigl|\{X_0,X_1,\dots,X_{\e{1}}\} \cap \overline{C_0} \bigr|$.
We decompose the set of states visited until time $\e{\mathbf{k}(n)}$ into  disjoint sets contained in \mbox{$C(X_{\e{j-1}})\setminus C(X_{\e{j}})$} with $j\in\{2,\dots,\mathbf{k}(n)\}$, $C(X_{\e{\mathbf{k}(n)}})$, $C_0$ and $\overline{C_0}$.
This gives
$$
\frac{\mathbf{R}_{\e{\k(n)}}}{\e{\k(n)}} = \underbrace{\frac{\k(n)}{\e{\k(n)}}}_{\to\ell}\cdot \biggl[\frac{1}{\k(n)}\sum_{j=1}^{\k(n)}\tR_j + \frac{1}{\k(n)}\mathbf{O}_{\k(n)}+ \frac{1}{\k(n)} \tR_0\biggr].
$$

Positive recurrence of $(\W{k},\psi_k)_{k\in\N}$ implies that, by the ergodic theorem for positive recurrent Markov chains together with Lemma \ref{lem:tildeR-finite}, 
\begin{equation}
\label{equ:R-convergence}
\lim_{n\to\infty }\frac{1}{\mathbf{k}(n)}\sum_{j=1}^{\mathbf{k}(n)}\tR_j = \tilde{\mathbf{r}} \quad \textrm{ almost surely.}
\end{equation}
Since the random walk finally converges to some infinite word in $V_\infty$ starting with a letter in $V_{\t(\mathbf{W}_1)}^\times$  and $\tR_0$ counts only finitely many elements of $V$ visited by the random walk outside of $V_{\t(\mathbf{W}_1)}^\ast\cup\{o\}$, we have $\lim_{n\to\infty}\tR_0/\mathbf{k}(n)= 0$  almost surely. This yields with Corollary \ref{cor:overhead}:
$$
\lim_{n\to\infty}\frac{\mathbf{R}_{\e{\k(n)}}}{\e{\k(n)}}  =\tilde{\mathbf{r}}\cdot \ell=:\mathfrak{r}>0\quad \textrm{ almost surely.}
$$

Furthermore, we obtain with (\ref{equ:convergence-e_k(n)}):
\begin{eqnarray*}
\frac{\mathbf{R}_n}{n} &=& \frac{\mathbf{R}_n-\mathbf{R}_{\e{\k(n)}}}{n}+\underbrace{\frac{\mathbf{R}_{\e{\k(n)}}}{\e{\k(n)}}}_{\to \mathfrak{r}}\underbrace{\frac{\e{\k(n)}}{n}}_{\to 1}.
\end{eqnarray*}
It remains to show that the first quotient on the right hand side tends to $0$, which is obtained as follows from (\ref{equ:speed}) and (\ref{equ:convergence-e_k(n)}):
\begin{eqnarray*}
0&\leq & \frac{\mathbf{R}_n-\mathbf{R}_{\e{\k(n)}}}{n} \leq \frac{\mathbf{R}_{\e{\k(n)+1}}-\mathbf{R}_{\e{\k(n)}}}{n}
=  \frac{\mathbf{R}_{\e{\k(n)+1}}-\mathbf{R}_{\e{\k(n)}}}{\e{\k(n)+1}}\frac{\e{\k(n)+1}}{n} \\
&=& \biggl[ \underbrace{\frac{\mathbf{R}_{\e{\k(n)+1}}}{\e{\k(n)+1}}}_{\to \mathfrak{r}}-\underbrace{\frac{\mathbf{R}_{\e{\k(n)}}}{\e{\k(n)}}}_{\to \mathfrak{r}}\underbrace{\frac{\e{\k(n)}}{\k(n)}}_{\to1/\ell}\underbrace{\frac{\k(n)}{\e{\k(n)+1}}}_{\to \ell}\biggr] \underbrace{\frac{\e{\k(n)+1}}{n}}_{\to 1}
\xrightarrow{n\to\infty} 0 \quad \textrm{ almost surely}.
\end{eqnarray*}
This completes the proof.
\end{proof}

The proof of Theorem \ref{thm:range-existence} gives the following formula for the asymptotic range:
\begin{Cor}
$$
\mathfrak{r}=\lim_{n\to\infty} \frac{\mathbf{R}_n}{n} = \tilde{\mathbf{r}}\cdot\ell \quad \textrm{ almost surely}.
$$
\end{Cor}
\begin{flushright}$\Box$\end{flushright}

\begin{Remark}\normalfont
The formula (\ref{equ:group-case-formula}) for the asymptotic range in the group setting does, in general, \textit{not} necessarily hold for general free products of graphs as we will demonstrate in the following example. We consider an adapted version of Example \ref{ex:free-product}: let be \mbox{$V_1=\{o_1,a\}$,} $V_2=\{o_2,b,c\}$, set $p_1(o_1,a)=p_1(a,a)=1$ and $p_2(o_2,b)=p_2(b,c)=p_2(c,b)=1$ and $\alpha=\frac12$. Then $U(o,o)=0$ by construction, that is, the formula (\ref{equ:group-case-formula}) for the asymptotic range in the group case gives the value $1-U(o,o|1)=1$. Now we show that the asymptotic range is strictly smaller than $1$. 
\par
For $x\in V_1\ast V_2$, $x\neq o$, denote by $[x]$ the \textit{last} letter of $x$.
Since $o\in V_1\ast V_2$ can \textit{not} be visited again, we have for $n\in\N$:
$$
\P\bigl[[X_n]=a\bigr]+\P\bigl[[X_n]=b\bigr]+\P\bigl[[X_n]=c\bigr]=1.
$$
For $n\in\N$, define the events
\begin{eqnarray*}
A_n &= & \big[[X_{4n}]=a,[X_{4n+1}]=b,X_{4n+1}=X_{4n+3}\bigr] \cup \big[[X_{4n}]=b,X_{4n}=X_{4n+2}\bigr]\\
&&\cup \big[[X_{4n}]=c,X_{4n}=X_{4n+2}\bigr].
\end{eqnarray*}
Then:
\begin{eqnarray*}
\P(A_n) &=& \P\bigl[[X_{4n}]=a\bigr] \cdot \P\bigl[ [X_{4n+1}]=b,X_{4n+1}=X_{4n+3} \mid [X_{4n}]=a\bigr] \\
&& + \P\bigl[[X_{4n}]=b\bigr] \cdot \P\bigl[ X_{4n}=X_{4n+2} \mid [X_{4n}]=b\bigr] \\
&& + \P\bigl[[X_{4n}]=c\bigr] \cdot \P\bigl[ X_{4n}=X_{4n+2} \mid [X_{4n}]=c\bigr]\\
&\geq&  \P\bigl[[X_{4n}]=a\bigr] \cdot \Bigl(\frac12\Bigr)^3 + \P\bigl[[X_{4n}]=b\bigr] \cdot \Bigl(\frac12\Bigr)^2 + \P\bigl[[X_{4n}]=c\bigr] \cdot \Bigl(\frac12\Bigr)^2 \geq  \Bigl(\frac12\Bigr)^3.
\end{eqnarray*}
If the event $A_n$ occurs then the random walk visits at least one element of $V$ in the time interval $[4n,4n+3]$ twice, that is, $|\{X_{4n},X_{4n+1},X_{4n+2},X_{4n+3}\}|<4$. Hence, each occurrence of $A_n$ reduces the maximal value of $\mathbf{R}_n$ by at least $1$. Since $\mathbf{R}_{4n}\leq 4n$ and the $A_n$'s cover disjoint time slots, we obtain:
$$
\mathbb{E}[\mathbf{R}_{4n}]  \leq  4n- \sum_{k=1}^n \mathbb{E}[\mathds{1}_{A_n}] = 4n -  \sum_{k=1}^n \P(A_n) 
\leq   4n- \sum_{k=1}^n \frac{1}{8}=4n+\frac{n}{8}.
$$
That is, an application of the Dominated convergence Theorem gives
$$
\mathfrak{r} = \lim_{n\to\infty} \frac{\mathbf{R}_n}{n} 
=\lim_{n\to\infty} \frac{\mathbb{E}[\mathbf{R}_{4n}] }{4n} \leq \lim_{n\to\infty} 1- \frac{\frac{n}{8}}{4n}= 1- \frac{1}{32} <1.
$$
Hence, the asymptotic range is strictly smaller than $1$, and the formula from the group setting does \textit{not} hold.
\end{Remark}

\section{Central Limit Theorem}
\label{sec:clt}

In this section we derive the Central Limit Theorem \ref{thm:clt}. The idea is to decompose the set of visited vertices up to time $n$ into disjoint i.i.d. subsets with the help of regeneration times. For this purpose, fix for the rest of this section any $g_0\in V_1^\times$ with $p_1(o_1,g_0)>0$ and define the following random times (recall that $S_x$ is the random time of the first visit in $x\in V$):
$$
\tau_0:=\inf\bigl\lbrace m\in \N \,\bigl|\, \mathbf{W}_{\e{m}}=g_0,\e{m}=S_{X_{\e{m}}}\bigr\rbrace
$$
and for $k\geq 1$
$$
\tau_k:=\inf\bigl\lbrace m > \tau_{k-1} \,\bigl|\, \mathbf{W}_{\e{m}}=g_0,\e{m}=S_{X_{\e{m}}}\bigr\rbrace.
$$
Due to positive-recurrence of $(\W{k},\psi_k)_{k\in\N}$ (Proposition \ref{prop:positive-recurrence}), we have $\mathbf{W}_{m}=g_0$ for infinitely many indices $m\in\N$ with probability $1$. Each time when the random visits for the first time some word $w\in V$ ending with $g_0$, it has probability of $\xi_1>0$ to stay in the cone $C(w)$ thereafter. This observation gives rise to a standard geometric argument which yields $\tau_k<\infty$ almost surely. In particular, the definition of $\tau_k$ implies 
$$
g_0^{-1}\bigl( \mathrm{supp}(\psi_{\mathbf{e}_{\tau_k}}) \cap C(g_0)\bigr)=\{o\}.
$$
For $i\in\N_0$, set
\begin{equation}\label{equ:def-Ti}
T_i:=\e{\tau_i},
\end{equation}
and define
\begin{equation}\label{def:Li}
\widetilde{\L}_i:= \sum_{j=\tau_{i-1}+1}^{\tau_i} \widetilde{\mathbf{R}}_j, \quad
\L_i:= \widetilde{\L}_i-(T_i-T_{i-1})\cdot \mathfrak{r} =(\mathbf{R}_{T_i}-\mathbf{R}_{T_{i-1}})-(T_i-T_{i-1})\cdot \mathfrak{r};
\end{equation}
the last equation uses the fact that $\mathbf{O}_{T_{i-1}}=\mathbf{O}_{T_i}=\mathds{1}_{\{o\}}$ by construction of $T_{i-1}$ and $T_i$.
In the following we will show that $T_1-T_0$ and $T_0$  have exponential moments and that  $(\L_i)_{i\in\N}$ and $(T_i-T_{i-1})_{i\in\N}$ form i.i.d. sequences with finite second moment.
\par
To this end we have to introduce further notation. Set
$$
V_{g_0} :=\bigl\lbrace w_1\dots w_n\in V\,\bigl|\, n\in\N,w_1,\dots,w_{n-1}\neq g_0, w_n=g_0\bigr\rbrace,
$$
the set of words in $V$ which end with letter $g_0$ and have no further occurrence of this letter, and
$$
V_{g_0}^{(2)}:=\bigl\lbrace w\in V_{g_0}\,\bigl|\, w \textrm{ starts with a letter in } V_2^\times\bigr\rbrace.
$$
Define for $z\in\mathbb{C}$
$$
\mathcal{L}(z):=\sum_{w\in V_{g_0}} L(o,w|z)
$$
and 
\begin{eqnarray*}
\mathcal{L}_1^{\times}(z) &:=& \sum_{g\in V_1^\times\setminus\{g_0\}} L(o,g|z) \stackrel{(\ref{equ:L-Li})}{=}
\sum_{g\in V_1^\times\setminus\{g_0\}} L_1\bigl(o_1,g\,\bigl| \, \xi_1(z)\bigr), \\
\mathcal{L}_2^{\times}(z) &:=& \sum_{g\in V_2^\times} L(o,g|z) \stackrel{(\ref{equ:L-Li})}{=}
\sum_{g\in V_2^\times } L_2\bigl(o_2,g\,\bigl| \, \xi_2(z)\bigr).
\end{eqnarray*}
Our next goal is to prove that $\mathcal{L}(z)$ has radius of convergence strictly bigger than $1$, from which we can deduce existence of exponential moments of $T_1-T_0$.
With the help of (\ref{equ:L-factorisation}) we can rewrite $\mathcal{L}(z)$ as follows:
\begin{eqnarray*}
\mathcal{L}(z) &=& \sum_{\substack{n\in\N,\\ w_1\dots w_n\in V_{g_0}}} \prod_{j=1}^n L_{\t(w_j)}\bigl( o_{\t(w_j)},w_j\,\bigl|\, \xi_{\t(w_j)}(z)\bigr) \\
&=& L\bigl(o,g_0|z) \cdot \Big( 1+ \sum_{n\geq 1} \bigl(\mathcal{L}_1^{\times}(z)\cdot \mathcal{L}_2^{\times}(z)\bigr)^n + \mathcal{L}_2^{\times}(z) \cdot \sum_{n\geq 0} \bigl(\mathcal{L}_1^{\times}(z)\cdot \mathcal{L}_2^{\times}(z)\bigr)^n\Bigr).
\end{eqnarray*}
In the last equation we made a case distinction whether a word $w_1\dots w_n\in V_{g_0}$ equals $g_0$, starts with any letter $w_1\in V_1^\times$ or with any letter $w_2\in V_2^\times$; also recall that $w_i\in V_1^\times$ ($w_i\in V_2^\times$ respectively) implies $w_{i+1}\in V_2^\times$ ($w_{i+1}\in V_1^\times$ respectively). For $z\in\mathbb{C}$ within the disc of convergence, we simplify the above formula for $\mathcal{L}(z)$:
\begin{eqnarray}
\mathcal{L}(z) &=& L_1\bigl(o_1,g_0\,\bigl| \, \xi_1(z)\bigr) \cdot \big( 1+ \mathcal{L}_2^{\times}(z)\bigr) \cdot  \sum_{n\geq 0} \bigl(\mathcal{L}_1^{\times}(z)\cdot \mathcal{L}_2^{\times}(z)\bigr)^n \nonumber \\
&=&  L_1\bigl(o_1,g_0\,\bigl| \, \xi_1(z)\bigr) \cdot \frac{1+ \mathcal{L}_2^{\times}(z)}{1-\mathcal{L}_1^{\times}(z)\cdot \mathcal{L}_2^{\times}(z)}.\label{equ:calL-equation}
\end{eqnarray}
Moreover, we can rewrite $\mathcal{L}_1^{\times}(z)$ with the help of (\ref{equ:L-G}) and (\ref{equ:Gi-sum}) as follows:
\begin{eqnarray*}
\mathcal{L}_1^{\times}(z)  
&=& \sum_{g\in V_1^\times\setminus\{g_0\}}\frac{G_1\bigl(o_1,g\,\bigl|\, \xi_1(z)\bigr)}{G_1\bigl(o_1,o_1\,\bigl|\,\xi_1(z)\bigr)} \\
&=& \frac{1}{\bigl(1-\xi_1(z)\bigr)\cdot G_1\bigl(o_1,o_1\,\bigl|\,\xi_1(z)\bigr)}- \frac{G_1\bigl(o_1,g_0\,\bigl|\,\xi_1(z)\bigr)}{G_1\bigl(o_1,o_1\,\bigl|\,\xi_1(z)\bigr)}-1.
\end{eqnarray*}
Analogously,
\begin{eqnarray*}
\mathcal{L}_2^{\times}(z)  &=&  \sum_{g\in V_2^\times}\frac{G_2\bigl(o_2,g\,\bigl|\,\xi_2(z)\bigr)}{G_2\bigl(o_2,o_2\,\bigl|\,\xi_2(z)\bigr)} 
= \frac{1}{\bigl(1-\xi_2(z)\bigr)\cdot G_2\bigl(o_2,o_2\,\bigl|\,\xi_2(z)\bigr)}-1.
\end{eqnarray*}
From the  formulas above follows that $\mathcal{L}_1^{\times}(z)$ and $\mathcal{L}_2^{\times}(z)$ both have radii of convergence strictly bigger than $1$, since $\xi_1(z), \xi_2(z)$ have radii of convergence strictly bigger than $1$ and satisfy $\xi_1(1),\xi_2(1)<1$; see \cite[Lemma 2.3]{gilch:07}.
\begin{Lemma}\label{lem:L-diff}
$\mathcal{L}(z)$ has radius of convergence $R(\mathcal{L})>1$.
\end{Lemma}
\begin{proof}
First, we recall that if $X_{T_j}=x$ then $X_{T_j-1}\notin C(x)$ by definition of $T_i$.
Due to positive-recurrence of $(\mathbf{W}_k)_{k\in\N}$ (immediate consequence of Proposition \ref{prop:positive-recurrence}) we have
\begin{eqnarray*}
1 &=& \P\bigl[\exists k\in\N: \mathbf{W}_k=g_0\bigr] \\
&=& \sum_{w\in V_{g_0}} \sum_{g\in V_2^\times} \P\bigl[X_{e_{\Vert w\Vert}}=w,X_{e_{\Vert w\Vert+1}}=wg\bigr]\\
&=& \sum_{w\in V_{g_0}} G(o,w|1)\\
&&\quad \cdot \sum_{g\in V_2^\times,m\in\N} \P_{w}\bigl[X_m=wg,X_{m-1}\notin C(wg),\forall k<m: X_k\neq w\bigr]\cdot \bigl(1-\xi_2\bigr)\\
&\stackrel{(\ref{equ:Green3})}{=}& G(o,o|1)\cdot \mathcal{L}(1) \\
&&\quad \cdot \sum_{g\in V_2^\times,m\in\N} \P_{o}\bigl[X_m=g,X_{m-1}\notin C(g),\forall k<m: X_k\neq o\bigr]\cdot \bigl(1-\xi_2\bigr).
\end{eqnarray*}
In the last equation we have used once again that there is measure-preserving bijection of paths in $C(w)$ to paths in $V$ not visiting $V_1^\times$ by the shift $C(w) \ni w' \mapsto w^{-1} w'$.
\par
The  equations above imply that $\mathcal{L}(1)<\infty$, and in particular $\mathcal{L}_1^{\times}(1)\cdot \mathcal{L}_2^{\times}(1)<1$. Since $\mathcal{L}_1^{\times}(1), \mathcal{L}_2^{\times}(1)$ are continuous and both have radii of convergence strictly bigger than $1$, Equation (\ref{equ:calL-equation}) provides that $\mathcal{L}(z)$ has also radius of convergence $R(\mathcal{L})$ strictly bigger \mbox{than $1$.}
\end{proof}
The last lemma implies that there exists
$R_0\in \bigl(1,R(\mathcal{L})\bigr)$ such that $\mathcal{L}(z)$ is arbitrarily often differentiable at $z=R_0$. This fact allows us to prove  the following proposition. Recall the definition of $T_0$ which implies $T_0=S_{X_{T_0}}$ and $\mathbf{O}_{T_0}=\{o\}$.

\begin{Prop}\label{lem:increment-bound}
The power series
$$
\sum_{n\geq 1} \P[T_1-T_0=n]\cdot z^n
$$
has radius of convergence strictly bigger than $1$. In particular, $T_1-T_0$ has exponential moments.
\end{Prop}
\begin{proof}
We rewrite the power series under consideration for $z>0$ by decomposing according to the values of $T_0$, $T_1-T_0$, $X_{T_0}$ and $X_{T_1}$:
\begin{eqnarray*}
&&\sum_{n\geq 1} \P[T_1-T_0=n]\cdot z^n\\
&=& \sum_{w\in V_{g_0}} \sum_{y\in V_{g_0}^{(2)}}  \sum_{m,n\in\N} \mathbb{P}\bigl[T_0=m,X_m=w,T_1=m+n,X_{m+n}=wy \bigr]\\
&\leq& \sum_{w\in V_{g_0}} \sum_{m\in\N} \P\bigl[ X_m=w,\forall m'<m: X_{m'}\neq w\bigr] \\
&&\quad \cdot \sum_{j\in \N_0} \P_w\bigl[X_j=w,\forall j'<j: X_{j'}\in C(w)\bigr]\cdot z^j  \\
&& \quad \cdot \sum_{y\in V_{g_0}^{(2)}} \sum_{l\in\N} \P_w\bigl[X_l=wy,\forall l'<l: X_{l'}\notin C(wy)\cup\{w\}\bigr] \cdot z^l  \cdot (1-\xi_1) \\
&\leq & \sum_{w\in V_{g_0}} \sum_{m\in\N} \P\bigl[ X_m=w,\forall m'<m: X_{m'}\neq w\bigr] \\
&&\quad \cdot \sum_{j\in \N_0} \P_w\bigl[X_j=w\bigr]\cdot z^j   \cdot \sum_{y\in V_{g_0}^{(2)}} \sum_{l\in\N} \P_w\bigl[X_l=wy,\forall l'<l: X_{l'}\neq w\bigr] \cdot z^l  \cdot (1-\xi_1) \\
&= & \sum_{w\in V_{g_0}} \sum_{m\in\N} \P\bigl[ X_m=w,\forall m'<m: X_{m'}\neq w\bigr] \cdot (1-\xi_1)\\
&&\quad \cdot \sum_{j\in \N_0} \P_w\bigl[X_j=w\bigr]\cdot z^j  \cdot \sum_{y\in V_{g_0}^{(2)}} \sum_{l\in\N} \P_o\bigl[X_l=y,\forall l'<l: X_{l'}\neq o\bigr] \cdot z^l,
\end{eqnarray*}
where we applied in the last step the same shift transformation as in the proof of Lemma \ref{lem:L-diff}. Moreover, we remark that 
\begin{eqnarray*}
&&\sum_{w\in V_{g_0}} \sum_{m\in\N} \P\bigl[ X_m=w, \forall m'<m: X_{m'}\neq w\bigr] \cdot (1-\xi_1) \\
&=& \P\bigl[\exists n\in\N: \W{n}=g_0,\e{n}=S_{X_{\e{n}}}\bigr].
\end{eqnarray*}
This yields for any $R_0\in\bigl(1,\min\{R,R(\mathcal{L})\}\bigr)$ together with Lemma \ref{lem:sup-G'}:
\begin{eqnarray*}
&&\sum_{n\geq 1} \P[T_1-T_0=n]\cdot R_0^n\\
&\leq & \sup_{w\in V}\sum_{j\in \N_0} \P_w\bigl[X_j=w\bigr]\cdot R_0^j  \cdot \underbrace{\sum_{y\in V_{g_0}^{(2)}} \sum_{l\in\N} \P\bigl[X_l=y,\forall l'<l: X_{l'}\neq o\bigr] \cdot R_0^l}_{\leq \mathcal{L}(R_0)}\\
&\leq & \sup_{w\in V} G(w,w|R_0) \cdot \mathcal{L}(R_0) <\infty.
\end{eqnarray*}
Since $\sum_{n\geq 1} \P[T_1-T_0=n]\cdot z^n$ has non-negative coefficients, Pringsheim's Theorem yields the proposed claim.

\end{proof}

Analogously, we have:
\begin{Lemma}\label{lem:expT0-finite}
$T_0$ has exponential moments.
\end{Lemma}
\begin{proof}
We show that $\sum_{n\geq 1} \P[T_0=n]\cdot z^n$ has radius of convergence strictly bigger than $1$, from which the claim follows. Rewriting this power series gives:
\begin{eqnarray*}
 \sum_{n\geq 1} \P[T_0=n]\cdot z^n 
&=& \sum_{n\geq 1} \sum_{w\in V_{g_0}} \P\bigl[X_n=w, \forall l<n: X_{l}\notin C(w),\forall m>n: X_{m}\in C(w)\bigr]\cdot z^n\\
&\leq & \sum_{n\geq 1} \sum_{w\in V_{g_0}} \P\bigl[X_n=w\bigr]\cdot \P_w\bigl[\forall m\geq 1: X_{m}\in C(w)\bigr]\cdot z^n\\
&=& \sum_{w\in V_{g_0}} G(o,o|z) \cdot L(o,w|z) \cdot (1-\xi_1) = G(o,o|z) \cdot\mathcal{L}(z)\cdot (1-\xi_1).
\end{eqnarray*}
The claim follows now from Lemma \ref{lem:L-diff}.
\end{proof}

The last proposition implies the following  important corollary:
\begin{Cor}
$\sigma_{\L}^2:=\mathrm{Var}(\L_1) <\infty$.
\end{Cor}
\begin{proof}
The claim follows from
$$
0\leq \widetilde{\L}_1 = \mathbf{R}_{T_1}-\mathbf{R}_{T_0} \leq T_1-S_{X_{T_0}}=T_1-T_0
$$
and Proposition \ref{lem:increment-bound}.
\end{proof}

The following proposition demonstrates that the random times $T_i$, $i\in\N$, are regeneration times:
\begin{Prop}\label{prop:iid-sequence}
$(T_i-T_{i-1})_{i\in\N}$ and
$(\L_i)_{i\in\N}$ are i.i.d. sequences of random variables.
\end{Prop}
\begin{proof}
In the first step we show that the $\L_i$'s are identically distributed. 
Let be $i,m\in\N$, $j\in\N_0$ and $z\in\mathbb{R}$. For $x_0\in V$ with $\P[X_{\e{\tau_j}}=x_0]>0$, denote by $\mathcal{P}^{(1)}_{j,x_0,m}$ the set of  all paths \mbox{$(o,w_1,\dots,w_m=x_0)\in V^{m+1}$} of length $m$ such that 
$$
\bigl[X_1=w_1,\dots,X_{m-1}=w_{m-1},X_m=x_0\bigr]\cap \bigl[X_m=x_0,\e{\tau_j}=m\bigr]\neq \emptyset,
$$
that is, each path in $\mathcal{P}^{(1)}_{j,x_0,m}$ allows to generate $\e{\tau_j}$ with $X_{\e{\tau_j}}=x_0$ at time $m$.
 
Furthermore, for $x_0\in V$ with $\P[X_{\e{\tau_{i-1}}}=x_0]>0$, denote by $\mathcal{P}^{(2)}_{i,x_0,n,z}$ the set of paths $(x_0,y_1,\dots,y_n)\in V^{n+1}$ of length $n\in\N$ such that 
$$
\bigcup_{t\in\N} \bigl[X_{t}=x_0,X_{t+1}=y_1,\dots,X_{t+n}=y_n\bigr]\cap \bigl[X_{t}=x_0,\e{\tau_{i-1}}=t,\e{\tau_{i}}=t+n,\L_i=z\bigr]\neq \emptyset,
$$
that is, each path in $\mathcal{P}^{(2)}_{i,x_0,n,z}$ allows to generate $X_{\e{\tau_{i-1}}}=x_0$, $\e{\tau_{i}}-\e{\tau_{i-1}}=n$ and $\L_i=z$. In particular, we have $y_i\in C(x_0)$.
By decomposing all paths until time $\e{\tau_{i}}$ into the part until time $\e{\tau_{i-1}}$ and into the part between the random times $\e{\tau_{i-1}}$ and $\e{\tau_{i}}$  we obtain with Lemma \ref{lem:prop-invariance}:
\begin{eqnarray*}
\P[\L_i=z] &=& \sum_{\substack{x_0\in V:\\ \P[X_{\e{\tau_{i-1}}}=x_0]>0}} \P\bigl[ X_{\e{\tau_{i-1}}}=x_0,\L_i=z\bigr]\\
&=& \sum_{\substack{x_0\in V:\\ \P[X_{\e{\tau_{i-1}}}=x_0]>0}}  \sum_{\substack{ n\in\N,\\ (x_0,y_1,\dots,y_n)\in \mathcal{P}^{(2)}_{i,x_0,n,z}}}  \P\left[ \begin{array}{c} X_{\e{\tau_{i-1}}}=x_0,\\ X_{\e{\tau_{i-1}}+1}=y_1,\dots, X_{\e{\tau_{i-1}}+n}=y_n,\\ \forall l\geq 1: X_l \in C(y_n)\end{array}
\right]\\
&=&  \sum_{\substack{x_0\in V:\\ \P[X_{\e{\tau_{i-1}}}=x_0]>0}} \sum_{m\geq 1} \sum_{(o,w_1,\dots,w_m)\in \mathcal{P}^{(1)}_{i-1,x_0,m}} \P\bigl[X_1=w_1,\dots,X_m=w_m\bigr] \\
&&\quad \cdot \sum_{n\geq 1} \sum_{(x_0,y_1,\dots,y_n)\in \mathcal{P}^{(2)}_{i,x_0,n,z}} \P_{x_0}\bigl[X_1=y_1,\dots,X_n=y_n\bigr] \\
&&\quad \cdot \P_{y_n}\bigl[\forall l\geq 1: X_l \in C(y_n)\bigr]\\
&\stackrel{L.\ref{lem:prop-invariance}}{=}&\sum_{\substack{x_0\in V:\\ \P[X_{\e{\tau_{i-1}}}=x_0]>0}} \sum_{m\geq 1} \sum_{(o,w_1,\dots,w_m)\in \mathcal{P}^{(1)}_{i-1,x_0,m}} \P\bigl[X_1=w_1,\dots,X_m=w_m\bigr] \\
&&\quad \cdot \sum_{n\geq 1} \sum_{(x_0,y_1,\dots,y_n)\in \mathcal{P}^{(2)}_{i,x_0,n,z}} \P_{g_0}\bigl[X_1=g_0x_0^{-1}y_1,\dots,X_n=g_0x_0^{-1}y_n\bigr] \cdot \bigl(1-\xi_1\bigr).
\end{eqnarray*}
Observe  that paths $(x_0,y_1,\dots,y_n)\in \mathcal{P}^{(2)}_{i,x_0,n,z}$ lie completely in the set $C(x_0)$. Therefore, there is a natural 1-to-1 correspondence between paths in $\mathcal{P}^{(2)}_{i,x_0,n,z}$ and $\mathcal{P}^{(2)}_{1,g_0,n,z}$ established by the vertex-wise shift $C(x_0)\ni g \mapsto g_0x_0^{-1}g\in C(g_0)$, which allows an application of Lemma \ref{lem:prop-invariance}. Moreover,
\begin{eqnarray*}
&&\sum_{\substack{x_0\in V:\\ \P[X_{\e{\tau_{i-1}}}=x_0]>0}} \sum_{m\geq 1} \sum_{(o,w_1,\dots,w_m)\in \mathcal{P}^{(1)}_{i-1,x_0,m}} \P\bigl[X_1=w_1,\dots,X_m=w_m\bigr] \cdot (1-\xi_1)\\
&=& \sum_{\substack{x_0\in V:\\ \P[X_{\e{\tau_{i-1}}}=x_0]>0}} \P\bigl[X_{\e{\tau_{i-1}}}=x_0\bigr]
=\P[\mathbf{e}_{\tau_{i-1}}<\infty]=1.
\end{eqnarray*}
Therefore,
\begin{eqnarray}\label{equ:Li-formula}
\P[\L_i=z] = \sum_{n\geq 1} \sum_{(g_0,y_1,\dots,y_n)\in \mathcal{P}^{(2)}_{1,g_0,n,z}} \P_{g_0}\bigl[X_1=y_1,\dots,X_n=y_n\bigr].
\end{eqnarray}
This proves that the $\L_i$'s all have the same distribution. Analogously, we can show that, for $k\in\N$,
\begin{eqnarray*}
&&\P[T_i-T_{i-1}=k] \\
&=& \sum_{\substack{x_0\in V:\\ \P[X_{\e{\tau_{i-1}}}=x_0]>0}}  \sum_{(x_0,y_1,\dots,y_k)\in \bigcup_{z\in\mathrm{supp}(\L_i)} \mathcal{P}^{(2)}_{i,x_0,k,z}}  \P\left[ \begin{array}{c} X_{\e{\tau_{i-1}}}=x_0,\\ X_{\e{\tau_{i-1}}+1}=y_1,\dots, X_{\e{\tau_{i-1}}+k}=y_k,\\ \forall l\geq 1: X_l \in C(y_k)\end{array}
\right]\\
&=& \sum_{(g_0,y_1,\dots,y_k)\in\bigcup_{z\in\mathrm{supp}(\L_i)} \mathcal{P}^{(2)}_{1,g_0,k,z}} \P_{g_0}\bigl[X_1=y_1,\dots,X_k=y_k\bigr],
\end{eqnarray*}
which proves that $(T_i-T_{i-1})_{i\in\N}$ is identically distributed.
\par
We restrict ourselves to the proof of independence of $\L_1$ and $\L_2$ for sake of better readability; the general proof follows completely analogously and is therefore omitted. Let be $l_1,l_2\in\mathbb{R}$. Then:
\begin{eqnarray}
&&\P[\L_1=l_1,\L_2=l_2] \nonumber\\
&=&  \sum_{\substack{x_0\in V:\\ \P[X_{\e{\tau_{0}}}=x_0]>0}}\sum_{\substack{y\in V:\\ \P[X_{\e{\tau_{1}}}=y]>0}}\sum_{\substack{z\in V:\\ \P[X_{\e{\tau_{2}}}=z]>0}}\sum_{m,n_1,n_2\geq 1}
\P\left[\begin{array}{c} e_{\tau_0}=m, X_{e_{\tau_0}}=x_0,\\  e_{\tau_1}=m+n_1, X_{e_{\tau_1}}=y, \\  e_{\tau_2}=m+n_1+n_2, X_{e_{\tau_2}}=z,\\ L_1=l_1, L_2=l_2\end{array}\right]\nonumber\\
&=& \sum_{\substack{x_0\in V:\\ \P[X_{\e{\tau_{0}}}=x_0]>0}}\sum_{m,n_1,n_2\geq 1} \sum_{\substack{(o,w_1,\dots,w_m)\in \mathcal{P}^{(1)}_{0,x_0,m}, \\ (x_0,y_1,\dots,y_{n_1})\in \mathcal{P}^{(2)}_{1,x_0,n_1,l_1},\\ (y_{n_1},z_1,\dots,z_{n_2})\in \mathcal{P}^{(2)}_{2,y_{n_1},n_2,l_2}}} \P\left[\begin{array}{c}
X_1=w_1,\dots,X_m=w_m,\\
X_{m+1}=y_1,\dots,X_{m+n_1}=y_{n_1},\\
X_{m+n_1+1}=z_1,\dots,\\ X_{m+n_1+n_2}=z_{n_2},\\
\forall l\geq 1: X_l\in C(z_{n_2})
\end{array}\right]\nonumber\\
&=& \sum_{\substack{x_0\in V:\\ \P[X_{\e{\tau_{0}}}=x_0]>0}} \sum_{m\geq 1} \sum_{(o,w_1,\dots,w_m)\in \mathcal{P}^{(1)}_{0,x_0,m}} \P\bigl[X_1=w_1,\dots,X_m=w_m\bigr]\nonumber \\
&&\quad \cdot \sum_{n_1\geq 1} \sum_{(x_0,y_1,\dots,y_{n_1})\in \mathcal{P}^{(2)}_{1,x_0,n_1,l_1}} \P_{x_0}\bigl[X_1=y_1,\dots,X_{n_1}=y_{n_1}\bigr] \nonumber\\
&& \quad \cdot \sum_{n_2\geq 1} \sum_{(y_{n_1},z_1,\dots,z_{n_2})\in \mathcal{P}^{(2)}_{2,y_{n_1},n_2,l_2}} \P_{y_{n_1}}\bigl[X_1=z_1,\dots,X_{n_2}=z_{n_2}\bigr] \nonumber\\
&&\quad \cdot \P_{z_{n_2}}\bigl[\forall l\geq 1: X_l \in C(z_{n_2})\bigr].\label{equ:L1-L2}
\end{eqnarray}
By case distinction on the different values of $\e{\tau_1}$ and $X_{\e{\tau_1}}$, we get
\begin{eqnarray}
1 &=& \P[\e{\tau_1}<\infty ]\nonumber\\
&=& \sum_{\substack{z_0\in V:\\ \P[X_{\e{\tau_{1}}}=z_0]>0}} \sum_{m_1\geq 1} \sum_{(o,w_1,\dots,w_{m_1})\in \mathcal{P}^{(1)}_{1,z_0,m_1}} \P\left[ \begin{array}{c} X_1=w_1,\dots,X_{m_1}=w_{m_1},\\ \forall l>m: X_l\in C(z_0)\end{array}\right] \label{equ:e-tau1<infty}\\
&=& \sum_{\substack{z_0\in V:\\ \P[X_{\e{\tau_{1}}}=z_0]>0}} \sum_{m_1\geq 1} \sum_{(o,w_1,\dots,w_{m_1})\in \mathcal{P}^{(1)}_{1,z_0,m_1}} \P\bigl[X_1=w_1,\dots,X_{m_1}=w_{m_1}\bigr] \cdot (1-\xi_1).\nonumber
\end{eqnarray}
Observe that, for $z_0\in V$ with $\P[X_{\e{\tau_{1}}}=z_0]>0$, the mapping
$$
\mathcal{P}^{(2)}_{2,y_{n_1},n_2,l_2} \ni (y_{n_1},z_1,\dots,z_{n_2}) \mapsto
(z_0,z_1',\dots,z_{n_2}')\in \mathcal{P}^{(2)}_{2,z_0,n_2,l_2},
$$
where $z_i':=z_0(y_{n_1}^{-1}z_i)$ for $i\in\{1,\dots,n_2\}$, is measure-preserving (Lemma \ref{lem:prop-invariance}), that is, 
\begin{equation}\label{equ:path-shift}
\P_{y_{n_1}}[X_1=z_1,\dots,X_{n_2}=z_{n_2}]=\P_{z_0}[X_1=z_1',\dots,X_{n_2}=z_{n_2}'].
\end{equation}
Furthermore, we remark that
\begin{eqnarray*}
\P[\L_1=l_1] &=& \sum_{\substack{x_0\in V:\\ \P[X_{\e{\tau_{0}}}=x_0]>0}} \sum_{m\geq 1} \sum_{(o,w_1,\dots,w_m)\in \mathcal{P}^{(1)}_{0,x_0,m}} \P\bigl[X_1=w_1,\dots,X_m=w_m\bigr] \\
&&\quad \cdot \sum_{n_1\geq 1} \sum_{(x_0,y_1,\dots,y_{n_1})\in \mathcal{P}^{(2)}_{1,x_0,n_1,l_1}} \P_{x_0}\bigl[X_1=y_1,\dots,X_{n_1}=y_{n_1}\bigr] \cdot (1-\xi_1), \\
\P[\L_2=l_2] &=& \sum_{\substack{z_0\in V:\\ \P[X_{\e{\tau_{1}}}=z_0]>0}} \sum_{m_1\geq 1} \sum_{(o,w_1,\dots,w_{m_1})\in \mathcal{P}^{(1)}_{1,z_0,m_1}} \P\bigl[X_1=w_1,\dots,X_{m_1}=w_{m_1}\bigr]  \\
&&\quad \cdot \sum_{n_2\geq 1} \sum_{(z_0,z_1,\dots,z_{n_2})\in \mathcal{P}^{(2)}_{2,z_0,n_2,l_2}} \P_{z_0}\bigl[X_1=z_1,\dots,X_{n_2}=z_{n_2}\bigr] \cdot (1-\xi_1).
\end{eqnarray*}
We use Equation (\ref{equ:path-shift}) together with  (\ref{equ:e-tau1<infty}) in order to obtain the required independence equation from (\ref{equ:L1-L2}):
\pagebreak[4]
\begin{eqnarray*}
&&\P[\L_1=l_1,\L_2=l_2] \\
&=& \sum_{\substack{x_0\in V:\\ \P[X_{\e{\tau_{0}}}=x_0]>0}} \sum_{m\geq 1} \sum_{(o,w_1,\dots,w_m)\in \mathcal{P}^{(1)}_{0,x_0,m}} \P\bigl[X_1=w_1,\dots,X_m=w_m\bigr] \\
&&\quad \cdot \sum_{n_1\geq 1} \sum_{(x_0,y_1,\dots,y_{n_1})\in \mathcal{P}^{(2)}_{1,x_0,n_1,l_1}} \P_{x_0}\bigl[X_1=y_1,\dots,X_{n_1}=y_{n_1}\bigr] \\
&&\quad \cdot \underbrace{\sum_{\substack{z_0\in V:\\ \P[X_{\e{\tau_{1}}}=z_0]>0}} \sum_{m_1\geq 1} \sum_{(o,w_1,\dots,w_{m_1})\in \mathcal{P}^{(1)}_{1,z_0,m_1}} \P\bigl[X_1=w_1,\dots,X_{m_1}=w_{m_1}\bigr] \cdot (1-\xi_1)}_{=1} \\
&&\quad \cdot \sum_{n_2\geq 1} \sum_{(y_{n_1},z_1,\dots,z_{n_2})\in \mathcal{P}^{(2)}_{2,y_{n_1},n_2,l_2}} \P_{y_{n_1}}\bigl[X_1=z_1,\dots,X_{n_2}=z_{n_2}\bigr] \cdot (1-\xi_1)\\
&\stackrel{(\ref{equ:path-shift})}{=}& \biggl(\sum_{\substack{x_0\in V:\\ \P[X_{\e{\tau_{0}}}=x_0]>0}} \sum_{m\geq 1} \sum_{(o,w_1,\dots,w_m)\in \mathcal{P}^{(1)}_{0,x_0,m}} \P\bigl[X_1=w_1,\dots,X_m=w_m\bigr] \\
&&\quad \cdot \sum_{n_1\geq 1} \sum_{(x_0,y_1,\dots,y_{n_1})\in \mathcal{P}^{(2)}_{1,x_0,n_1,l_1}} \P_{x_0}\bigl[X_1=y_1,\dots,X_{n_1}=y_{n_1}\bigr] \cdot (1-\xi_1)\biggr)\\
&&\quad \cdot \biggl( \sum_{\substack{z_0\in V:\\ \P[X_{\e{\tau_{1}}}=z_0]>0}} \sum_{m_1\geq 1} \sum_{(o,w_1,\dots,w_{m_1})\in \mathcal{P}^{(1)}_{1,z_0,m_1}} \P\bigl[X_1=w_1,\dots,X_{m_1}=w_{m_1}\bigr]  \\
&&\quad \cdot \sum_{n_2\geq 1} \sum_{(z_0,z_1,\dots,z_{n_2})\in \mathcal{P}^{(2)}_{2,z_0,n_2,l_2}} \P_{z_0}\bigl[X_1=z_1,\dots,X_{n_2}=z_{n_2}\bigr]\cdot (1-\xi_1)\biggr) \\
&=& \P[L_1=l_1]\cdot \P[L_2=l_2].
\end{eqnarray*} 
The proof of independence of $(T_{i}-T_{i-1})_{i\in\N}$ works completely analogously: for $n_1,n_2\in\N$, we can rewrite
\begin{eqnarray*}
&&\P[T_1-T_0=n_1,T_2-T_1=n_2] \\
&=& \sum_{\substack{x_0\in V:\\ \P[X_{\e{\tau_{0}}}=x_0]>0}} \sum_{m\geq 1} \sum_{(o,w_1,\dots,w_m)\in \mathcal{P}^{(1)}_{0,x_0,m}} \P\bigl[X_1=w_1,\dots,X_m=w_m\bigr] \\
&&\quad \cdot  \sum_{(x_0,y_1,\dots,y_{n_1})\in \bigcup_{l_1\in\mathrm{supp}(\L_1)}\mathcal{P}^{(2)}_{1,x_0,n_1,l_1}} \P_{x_0}\bigl[X_1=y_1,\dots,X_{n-1}=y_{n_1}\bigr] \\
&& \quad \cdot  \sum_{(y_{n_1},z_1,\dots,z_{n_2})\in \bigcup_{l_2\in\mathrm{supp}(\L_1)} \mathcal{P}^{(2)}_{2,y_{n_1},n_2,l_2}} \P_{y_{n_1}}\bigl[X_1=z_1,\dots,X_{n_2}=z_{n_2}\bigr]  \\
&&\quad \cdot \P_{z_{n_2}}\bigl[\forall l\geq 1: X_l \in C(z_{n_2})\bigr] 
\end{eqnarray*}
\begin{eqnarray*}
&\stackrel{(\ref{equ:path-shift})}{=}& \sum_{\substack{x_0\in V:\\ \P[X_{\e{\tau_{0}}}=x_0]>0}} \sum_{m\geq 1} \sum_{(o,w_1,\dots,w_m)\in \mathcal{P}^{(1)}_{0,x_0,m}} \P\bigl[X_1=w_1,\dots,X_m=w_m\bigr] \\
&&\quad \cdot \sum_{(x_0,y_1,\dots,y_{n_1})\in \bigcup_{l_1\in\mathrm{supp}(\L_1)}\mathcal{P}^{(2)}_{1,x_0,n_1,l_1}} \P_{x_0}\bigl[X_1=y_1,\dots,X_{n_1}=y_{n_1}\bigr] \cdot (1-\xi_1)\\
&&\quad \cdot \underbrace{\sum_{\substack{z_0\in V:\\ \P[X_{\e{\tau_{1}}}=z_0]>0}} \sum_{m_1\geq 1} \sum_{(o,w_1,\dots,w_{m_1})\in \mathcal{P}^{(1)}_{1,z_0,m_1}} \P\bigl[X_1=w_1,\dots,X_{m_1}=w_{m_1}\bigr] \cdot (1-\xi_1)}_{=1} \\
&&\quad \cdot  \sum_{(z_0,z_1,\dots,z_{n_2})\in \bigcup_{l_2\in\mathrm{supp}(\L_1)}\mathcal{P}^{(2)}_{2,z_0,n_2,l_2}} \P_{z_0}\bigl[X_1=z_1,\dots,X_{n_2}=z_{n_2}\bigr] \\
&=& \P[T_1-T_0=n_1]\cdot \P[T_2-T_1=n_2],
\end{eqnarray*} 
since
\begin{eqnarray*}
\P[T_1-T_0=n_1] &=& \sum_{\substack{x_0\in V:\\ \P[X_{\e{\tau_{0}}}=x_0]>0}} \sum_{m\geq 1} \sum_{(o,w_1,\dots,w_m)\in \mathcal{P}^{(1)}_{0,x_0,m}} \P\bigl[X_1=w_1,\dots,X_m=w_m\bigr] \\
&&\quad \cdot \sum_{(x_0,y_1,\dots,y_{n_1})\in \bigcup_{l_1\in\mathrm{supp}(\L_1)} \mathcal{P}^{(2)}_{1,x_0,n_1,l_1}} \P_{x_0}\bigl[X_1=y_1,\dots,X_{n_1}=y_{n_1}\bigr] \cdot (1-\xi_1) \\
\P[T_2-T_1=n_2] &=& \sum_{\substack{z_0\in V:\\ \P[X_{\e{\tau_{1}}}=z_0]>0}} \sum_{m_1\geq 1} \sum_{(o,w_1,\dots,w_{m_1})\in \mathcal{P}^{(1)}_{1,z_0,m_1}} \P\bigl[X_1=w_1,\dots,X_{m_1}=w_{m_1}\bigr],  \\
&&\quad \cdot \sum_{(z_0,z_1,\dots,z_{n_2})\in \bigcup_{l_2\in\mathrm{supp}(\L_1)} \mathcal{P}^{(2)}_{2,z_0,n_2,l_2}} \P_{z_0}\bigl[X_1=z_1,\dots,X_{n_2}=z_{n_2}\bigr]\cdot (1-\xi_1).
\end{eqnarray*}
Once again, the general proof for independence of  $(T_{i}-T_{i-1})_{i\in\N}$ works completely analogously, and we leave it as an exercise to the interested reader. This finishes the proof of the proposition.
\end{proof}
For $n\in\N$, set 
$$
\mathbf{t}(n):=\max\{m\in\N_0 \mid T_m\leq n\}.
$$ 
Since $(\W{k},\psi_k)_{k\in\N}$ is positive-recurrent, a geometric standard argument implies that \mbox{$\mathbf{t}(n)\to\infty$} almost surely.
From Propositions \ref{lem:increment-bound} and \ref{prop:iid-sequence} follows that
$$
\frac{1}{\mathbf{t}(n)}\sum_{j=1}^{\mathbf{t}(n)} T_j-T_{j-1}\xrightarrow{n\to\infty}\mathbb{E}[T_1-T_0] \ \textrm{almost surely},
$$
which in turn yields with Lemma \ref{lem:expT0-finite} that
\begin{equation}\label{equ:e-tau-convergence}
\frac{\e{\tau_{\mathbf{t}(n)}}}{\mathbf{t}(n)}=\frac{T_{\mathbf{t}(n)}}{\mathbf{t}(n)}\xrightarrow{n\to\infty} \mathbb{E}[T_1-T_0] \ \textrm{almost surely.}
\end{equation}
This observation will be helpful in the following corollary, for which we recall the definition of $\L_1=\widetilde{\mathbf{L}}_1-(T_1-T_0)\mathfrak{r}$:

\begin{Cor}\label{cor:exp=0}
$$
\mathbb{E}[\L_1]=0 \quad \textrm{ and }\quad \sigma_{\L}^2=\mathbb{E}\bigl[\bigl((\mathbf{R}_{T_1}-\mathbf{R}_{T_0})- (T_1-T_0)\cdot \mathfrak{r}\bigr)^2\bigr]>0.
$$
\end{Cor}
\begin{proof}
Analogously to Theorem \ref{thm:range-existence} and and (\ref{equ:R-convergence}) (replace the exit times $\mathbf{e}_i$ by $T_i$) one can prove that
\begin{equation}\label{equ:T-formula}
\mathfrak{r}=\lim_{n\to\infty} \frac{\mathbf{R}_{\mathbf{e}_{\tau_{\mathbf{t}(n)}}}}{\mathbf{t}(n)}\frac{\mathbf{t}(n)}{\mathbf{e}_{\tau_{\mathbf{t}(n)}}}
=
\frac{\mathbb{E}\bigl[\mathbf{R}_{T_1}-\mathbf{R}_{T_0}\bigr]}{\mathbb{E}[T_1-T_0]}=\frac{\mathbb{E}[\widetilde{\L}_1]}{\mathbb{E}[T_1-T_0]}.
\end{equation}
Therefore, $\mathbb{E}[\L_1]=0$ and the proposed formula for $\mathrm{Var}(\L_1)$ follows. 
\par
It remains to show that $(\mathbf{R}_{T_1}-\mathbf{R}_{T_0})- (T_1-T_0)\cdot \mathfrak{r}$ is \textit{not} almost surely constant. To this end we construct two  paths (having positive probability to be realised) of different length but which visit the same vertices.
Take any $x_0\in V_i$, $i\in\calI$, with $p^{(n_x)}(x_0,x_0)>0$ for some $n_x\in \N$; this choice is possible because of the assumption (A) made at the  beginning of Subsection \ref{subsec:free-products}. If $x_0\in V_2$ (otherwise, swap the roles of $V_1$ and $V_2$), take now any path inside $C(g_0)$ from $g_0$ to $g_0x_0g_0$ which visits $g_0x_0$ twice, say 
$$ 
(g_0,g_1,\dots,g_{j-1},g_0x_0,g_{j+1},\dots,g_{j+k-1},g_0x_0,g_{j+k+1},\dots,g_0x_0g_0).
$$
We can add another loop at $g_0x_0$ as follows:
$$
(g_0,g_1,\dots,g_{j-1},g_0x_0,g_{j+1},\dots,g_{j+k-1},g_0x_0,g_{j+1},\dots,g_{j+k-1},g_0x_0,g_{j+k+1},\dots,g_0x_0g_0).
$$
Both paths visit the same elements of $V$, but have different lengths. That is, there are $z_1,z_2\in\mathbb{R}$, $z_1\neq z_2$, such that $\P[\L_1=z_1],\P[\L_1=z_2]>0$, providing $\sigma_{\L}^2>0$.
\end{proof}

We introduce further notation: for $k\in\mathbb{N}$,  set
\[
\mathbf{S}_k:= \sum_{i=1}^k \L_i, \quad \widetilde{\mathbf{S}}_k:= \sum_{i=1}^k \widetilde{\L}_i.
\]
We will need the following useful convergence behaviour:
\begin{Lemma} \label{lem:overhead}
$$
\frac{\mathbf{R}_n-\widetilde{\mathbf{S}}_{\mathbf{t}(n)}}{\sqrt{n}}\xrightarrow{\P} 0.
$$
\end{Lemma}
\begin{proof}
We have the following upper bound for the nominator above:
$$
0\leq \mathbf{R}_n-\widetilde{\mathbf{S}}_{\mathbf{t}(n)} \leq \mathbf{R}_{T_{\mathbf{t}(n)+1}} - \widetilde{\mathbf{S}}_{\mathbf{t}(n)} \leq T_{\mathbf{t}(n)+1}-T_{\mathbf{t}(n)}+T_0.
$$ 
Since $(T_i-T_{i-1})_{i\in\mathbb{N}}$ is an i.i.d. sequence, we obtain for  $\varepsilon >0$ and $n$ large enough:
\begin{eqnarray*}
&&\mathbb{P}\bigl[\mathbf{R}_n-\widetilde{\mathbf{S}}_{\mathbf{t}(n)}  > \varepsilon \sqrt{n}, \mathbf{t}(n)\geq 1\bigr] \\
&\leq & \mathbb{P}\bigl[T_{\mathbf{t}(n)+1}-T_{\mathbf{t}(n)}+T_0 > \varepsilon \sqrt{n}, \mathbf{t}(n)\geq 1\bigr] \\
&\leq & \mathbb{P}\bigl[\exists k\in\{1,\dots,n\}: T_{k+1}-T_{k}+T_0 > \varepsilon \sqrt{n}\bigr] \\
&\leq & \mathbb{P}\Bigl[\exists k\in\{1,\dots,n\}: T_{k+1}-T_{k} > \frac{\varepsilon}{2} \sqrt{n}\Bigr] +\mathbb{P}\Bigl[T_0 >  \frac{\varepsilon}{2} \sqrt{n}\Bigr]\\
&\leq & n\cdot \mathbb{P}\Bigl[T_{1}-T_{0} > \frac{\varepsilon}{2} \sqrt{n}\Bigr] +\mathbb{P}\Bigl[T_0 >  \frac{\varepsilon}{2} \sqrt{n}\Bigr]\\
&\leq & n\cdot \mathbb{P}\Bigl[(T_{1}-T_{0})^4 > \frac{\varepsilon^4}{2^4} n^2\Bigr] +\mathbb{P}\Bigl[T_0 >  \frac{\varepsilon}{2} \sqrt{n}\Bigr]\\
&\leq & n\cdot \frac{\mathbb{E}\bigl[(T_1-T_0)^4\bigr]}{\frac{\varepsilon^4}{2^4}n^2} + \frac{\mathbb{E}[T_0]}{\frac{\varepsilon}{2} \sqrt{n}} \xrightarrow{n\to\infty} 0.
\end{eqnarray*}
In the last inequality we applied Markov's Inequality and used Proposition \ref{lem:increment-bound}.
As $\mathbf{t}(n)\to \infty$ almost surely, we have established the proposed convergence.
\end{proof}

\begin{proof}[Proof of Theorem \ref{thm:clt}]
First, we remark that $\sigma^2$ as defined in Theorem \ref{thm:clt} is strictly positive due to Corollary \ref{cor:exp=0}.
By Billingsley \cite[Theorem 14.4]{billingsley:99} together with Corollary \ref{cor:exp=0}, we obtain the following convergence in law:
$$
\frac{\mathbf{S}_{\mathbf{t}(n)}}{\sigma_{\L} \sqrt{\mathbf{t}(n)}} \xrightarrow{\mathcal{D}} N(0,1).
$$
The convergence in (\ref{equ:e-tau-convergence}) implies
$$
0\leq \frac{n-\e{\tau_{\mathbf{t}(n)}}}{\mathbf{t}(n)} \leq \frac{\e{\tau_{\mathbf{t}(n)+1}}-\e{\tau_{\mathbf{t}(n)}}}{\mathbf{t}(n)}
\xrightarrow{n\to\infty} 0 \ \textrm{almost surely},
$$
from which we obtain 
$$
\frac{n}{\mathbf{t}(n)} = \frac{n-\e{\tau_{\mathbf{t}(n)}}}{\mathbf{t}(n)}+ \frac{\e{\tau_{\mathbf{t}(n)}}}{\mathbf{t}(n)} \xrightarrow{n\to\infty} \mathbb{E}[T_1-T_0] \ \textrm{almost surely.}
$$
An application of the Lemma of Slutsky gives:
\begin{equation}\label{equ:clt}
\frac{\mathbf{S}_{\mathbf{t}(n)}}{ \sqrt{n}} = \frac{\mathbf{S}_{\mathbf{t}(n)}}{\sigma_{\L} \sqrt{\mathbf{t}(n)}}\frac{\sqrt{\mathbf{t}(n)}}{\sqrt{n}}\sigma_{\L}\xrightarrow{\mathcal{D}} N(0,\sigma^2),
\end{equation}
where $\sigma^2$ is given as stated in Theorem \ref{thm:clt}.
\par
Observe now that, for $n\geq T_0$,
$$
\mathbf{S}_{\mathbf{t}(n)}=(\mathbf{R}_{T_{\mathbf{t}(n)}}-\mathbf{R}_{T_0})-(T_{\mathbf{t}(n)}-T_0)\cdot \mathfrak{r}.
$$
For $\varepsilon >0$, we get:
\begin{eqnarray}
&&\P\bigl[ \bigl| \mathbf{S}_{\mathbf{t}(n)} -(\mathbf{R}_n-n\cdot\mathfrak{r})\bigr| >\varepsilon \sqrt{n}\bigr]\nonumber\\
&\leq & \P\Bigl[ \mathbf{R}_n-\mathbf{R}_{T_{\mathbf{t}(n)}}+\mathbf{R}_{T_0} \geq  \frac{\varepsilon}{2} \sqrt{n}\Bigr] +
\P\Bigl[ \mathfrak{r}\cdot \bigl(n - (T_{\mathbf{t}(n)}-T_0)\bigr) \geq  \frac{\varepsilon}{2} \sqrt{n}\Bigr]\label{equ:inequ1}.
\end{eqnarray}
From Lemma \ref{lem:overhead} follows that
$$
\frac{\mathbf{R}_n-\mathbf{R}_{T_{\mathbf{t}(n)}}+\mathbf{R}_{T_0}}{\sqrt{n}} = \frac{\mathbf{R}_n-\sum_{j=1}^{\tau_{\mathbf{t}(n)}} \tR_j}{\sqrt{n}}
=\frac{\mathbf{R}_n-\widetilde{\mathbf{S}}_{\mathbf{t}(n)}}{\sqrt{n}}\xrightarrow{\P} 0,
$$
hence $\P\Bigl[ \mathbf{R}_n-\mathbf{R}_{T_{\mathbf{t}(n)}}+\mathbf{R}_{T_0} \geq  \frac{\varepsilon}{2} \sqrt{n}\Bigr]\to 0$ as $n\to\infty$.
For the second summand in (\ref{equ:inequ1}), we obtain
\begin{eqnarray*}
&& \P\Bigl[ \mathfrak{r}\cdot \bigl(n - (T_{\mathbf{t}(n)}-T_0)\bigr) \geq  \frac{\varepsilon}{2} \sqrt{n},\mathbf{t}(n)\geq 1\Bigr] \\
&\leq & \P\Bigl[ \mathfrak{r}\cdot \bigl(T_{\mathbf{t}(n)+1} - (T_{\mathbf{t}(n)}-T_0)\bigr) \geq  \frac{\varepsilon}{2} \sqrt{n},\mathbf{t}(n)\geq 1\Bigr]\\
&\leq & \P\Bigl[ \exists k\in\{1,\dots,n\}: T_{k+1}-(T_k-T_0) \geq  \frac{\varepsilon}{2\mathfrak{r}} \sqrt{n}\Bigr] \\
&\leq  & \P\Bigl[\exists k\in\{1,\dots,n\}: T_{k+1}-T_k \geq  \frac{\varepsilon}{4\mathfrak{r}} \sqrt{n}\Bigr]+\P\Bigl[T_{0} \geq  \frac{\varepsilon}{4\mathfrak{r}} \sqrt{n}\Bigr]\\
&\stackrel{\textrm{Prop. \ref{prop:iid-sequence}}}{\leq} & n\cdot \P\Bigl[T_{1}-T_0 \geq  \frac{\varepsilon}{4\mathfrak{r}} \sqrt{n}\Bigr] +  \P\Bigl[T_0 \geq  \frac{\varepsilon}{4\mathfrak{r}} \sqrt{n}\Bigr] \\ 
&= & n\cdot \P\Bigl[(T_{1}-T_0)^4 \geq  \frac{\varepsilon^4}{(4\mathfrak{r})^4} n^2\Bigr] +  \P\Bigl[T_0 \geq  \frac{\varepsilon}{4\mathfrak{r}} \sqrt{n}\Bigr] \\ 
&\leq &  (4\mathfrak{r})^4\cdot n\cdot \frac{\mathbb{E}\bigl[(T_1-T_0)^4\bigr]}{\varepsilon^4 n^2}+
4\mathfrak{r} \cdot \frac{\mathbb{E}\bigl[T_0\bigr]}{\varepsilon \sqrt{n}} \xrightarrow{n\to\infty} 0.
\end{eqnarray*}
We applied Markov's Inequality in the last line together with Proposition \ref{lem:increment-bound} and Lemma \ref{lem:expT0-finite}.
As $\mathbf{t}(n)\to\infty$ almost surely for $n\to\infty$, we obtain
$$
\P\bigl[ \bigl| S_{\mathbf{t}(n)} -(\mathbf{R}_n-n\cdot\mathfrak{r})\bigr| >\varepsilon \sqrt{n}\bigr] \xrightarrow{\P} 0.
$$
%
%
Another application of the Lemma of Slutsky together with (\ref{equ:clt}) proves the claim.
\end{proof}

\section{Analyticity of the Asymptotic Range}
\label{sec:analyticity}

In this section we prove Theorem \ref{thm:analyticity} which states that $\mathfrak{r}$ varies real-analytically in terms of probability measures of constant support depending on finitely many parameters. First, we describe the problem more formally. From now on we \textit{assume} that the transition probabilities depend on finitely many parameters $p_1,\dots, p_d$, $d\in\N$, taking values in $(0,1)$; that is, if $p(x,y)>0$, $x,y\in V_i$, then $p(x,y)=p_j$ for some $j\in\{1,\dots,d\}$. 
\par
More precisely, we write $E_i:=\{(x_i,y_i)\in V_i^2 \mid x_i \to y_i\}$ for  the set of edges of $\mathcal{X}_i$, $i\in\{1,2\}$. Let $\eta: E_1\cup E_2 \to\{p_1,\dots,p_d\}$ be a mapping. We say that a parameter vector \mbox{$\underline{p}:=(p_1,\dots,p_d)\in (0,1)^{d}$} permits a well-defined random walk on $V$ if
\begin{eqnarray*}
&&\forall x_1\in V_1: \sum_{y_1\in V_1: (x_1,y_1)\in E_1} \eta(x_1,y_1)+\sum_{y_2\in V_2: (o_2,y_2)\in E_2} \eta(o_2,y_2)=1\quad \textrm{ and }\\
&&\forall x_2\in V_2: \sum_{y_2\in V_2: (x_2,y_2)\in E_2} \eta(x_2,y_2)+\sum_{y_1\in V_1: (o_1,y_1)\in E_1} \eta(o_1,y_1)=1.
\end{eqnarray*}
In this case we set $\alpha:= \sum_{y\in V_1} \eta(o_1,y)$,  
$p_1(x_1,y_1):=\eta(x_1,y_1)/\alpha$ and $p_2(x_2,y_2):=\eta(x_2,y_2)/(1-\alpha)$ for $x_1,y_1\in V_1$, $x_2,y_2\in V_2$, which defines a well-defined random walk on the free product $V$.

We write
$$
\mathcal{P}:=\Bigl\lbrace \underline{p}:=(p_1,\dots,p_d)\in (0,1)^{d} \,\Bigl|\, \underline{p} \textrm{ defines a well-defined random walk on $V$}\Bigr\rbrace,
$$ 
the set of parameter vectors which allow well-defined random walks.
Our aim is to show that the mapping
$$
\mathcal{P} \ni \underline{p}\mapsto \mathfrak{r}=\mathfrak{r}(\underline{p})
$$
varies real analytically in $\underline{p}=(p_1,\dots,p_d)\in\mathcal{P}$, that is, $\mathfrak{r}(\underline{p})$ can be expanded as a multivariate power series in the variables $p_1,\dots,p_d$  in a neighbourhood of any $\underline{p}_0\in\mathcal{P}$. 
\par
We have to give some preliminary remarks, before we are able to prove the proposed result. Let $A_n\in \sigma(X_0,X_1,\dots,X_n)$, $n\in\N_0$, be an event which can be described by paths of length $n$ of the Markov chain $(X_n)_{n\in\N_0}$ on $V$; e.g., $A_n=[X_n\in V_1]$. By decomposing each such path belonging to $A_n$ according to the number of steps which are performed w.r.t. the parameters $p_1,\dots,p_d$, we can rewrite $\P(A_n)$ as 
\begin{equation}
 \sum_{\substack{n_1,\dots,n_{d}\geq 0:\\ n_1+\dots+n_{d}= n}} c(n_1,\dots,n_{d})\cdot p_1^{n_1}\cdot
\ldots \cdot p_d^{n_d}, \label{equ:path-prob}
\end{equation}
where $c(n_1,\dots,n_{d})\in\N_0$.
If the generating function $\mathcal{F}(z):= \sum_{n\geq 0}\P(A_n)\,z^n$, $z\in\mathbb{C}$, has radius of convergence strictly bigger than $1$, then, for $\delta>0$ small enough,
\begin{eqnarray}
\infty >\mathcal{F}(1+\delta)&=&\sum_{n\geq 0} \sum_{\substack{n_1,\dots,n_{d}\geq 0:\\ n_1+\dots+n_{d}=n}} c(n_1,\dots,n_{d})\bigl( p_1(1+\delta)\bigr)^{n_1}\cdot\ldots\cdot\bigl( p_d  (1+\delta)\bigr)^{n_d};\label{equ:analytic1}
\end{eqnarray}
that is, the mapping $\underline{p}\mapsto \mathcal{F}(1)$ varies real-analytically in a neighbourhood of any $\underline{p}_0=(p_1,\dots,p_d)\in\mathcal{P}$ when considered as a power series in $\underline{p}$. 
\par
We apply this observation to the Green functions $G(x,y|z)$, $x,y\in V$: since $G(x,y|z)$ has radius of convergence strictly bigger than $1$ and $p^{(n)}(x,y)=\P_x[X_n=y]$ can be rewritten as in (\ref{equ:path-prob}), there is some neighbourhood $\mathfrak{U}$ of $\underline{p}_0$ in $\mathcal{P}\subseteq \mathbb{R}^{d}$ such that the Green functions w.r.t. $\underline{p}\in\mathfrak{U}$ have still radius of convergence strictly bigger than $1$. Hence, $\mathfrak{r}(\underline{p})$ exists for every $\underline{p}\in \mathfrak{U}$.
\par
We apply the observation above also to  the formula for $\mathfrak{r}$ given in (\ref{equ:T-formula}):
$$
\mathfrak{r}=\frac{\mathbb{E}[\widetilde{\L}_1]}{\mathbb{E}[T_1-T_0]}.
$$
We now show that both nominator and denominator vary real-analytically in $\underline{p}\in\mathcal{P}$.

\begin{Lemma}\label{lem:T1-T0-analytic}
The mapping $ \mathcal{P}\ni \underline{p} \mapsto \mathbb{E}[T_1-T_0]$ varies real-analytically.
\end{Lemma}
\begin{proof}
First, observe that we can rewrite the expectation as
$$
\mathbb{E}[T_1-T_0]=\sum_{n\geq 1}\mathbb{P}[T_1-T_0=n]\cdot n = \frac{\partial}{\partial z}\biggl[\sum_{n\geq 1}\mathbb{P}[T_1-T_0=n]\cdot z^n\biggr]\Biggl|_{z=1}.
$$
Recall from
Proposition \ref{lem:increment-bound} that the power series $\sum_{n\geq 1}\mathbb{P}[T_1-T_0=n]\cdot z^n$ has radius of convergence strictly bigger than $1$.  According to the remarks at the beginning of this section it suffices to show that the probabilities $\mathbb{P}[T_1-T_0=n], n\in\N$ can be written in the form of (\ref{equ:path-prob}). We decompose this probability according to the values of $T_0$, $T_1-T_0$, $X_{T_0}$ and $X_{T_1}$:
\begin{eqnarray}
&& \P[T_1-T_0=n] \nonumber\\
&=& \sum_{m\in\N}\sum_{w\in V_{g_0}}\sum_{y\in V_{g_0}^{(2)}}  \P\bigl[ X_{T_0}=w,T_0=m,X_{T_1}=wy, T_1=m+n\bigr] \nonumber\\
&=&  \sum_{m\in\N} \sum_{w\in V_{g_0}} \P\bigl[ X_m=w,\forall m'<m: X_{m'}\neq w\bigr] \nonumber  \\
&& \quad \cdot \sum_{y\in V_{g_0}^{(2)}}  \P_w\bigl[ X_n=wy,\forall n'\leq n: X_{n'}\in C(w) \wedge X_{n'}\notin C(wy)\bigr]  \cdot (1-\xi_1) \nonumber \\
&=& \sum_{y\in V_{g_0}^{(2)}}  \P\bigl[ X_n=y,\forall n'\leq n: X_{n'}\notin V_1^\times \cup C(y) \bigr].\label{equ:prob-T1-T0}
\end{eqnarray}
We used the equation
\begin{equation}\label{equ:T0-finite}
\sum_{m\in\N,w\in V_{g_0}} \mathbb{P}\bigl[X_m=w,\forall m'<m: X_{m'}\neq w\bigr]\cdot (1-\xi_1)=\mathbb{P}[T_0<\infty]=1
\end{equation}
and the measure preserving shift of paths in $C(w), w\in V_{g_0}$ to paths in $V\setminus V_1^\times$ by the shift transformation $g\mapsto w^{-1}g$.
Since every single probability in the sum (\ref{equ:prob-T1-T0}) can be rewritten in the form of (\ref{equ:path-prob}), we finally get  analyticity of $\mathbb{E}[T_1-T_0]$ as explained in (\ref{equ:analytic1}).
\end{proof}

\begin{Lemma}\label{lem:EL-analytic}
The mapping $\mathcal{P}\ni \underline{p} \mapsto \mathbb{E}[\widetilde{\L}_1]$ varies real-analytically.
\end{Lemma}
\begin{proof}
Since $0\leq \widetilde{\L}_1\leq T_1-T_0$, we can rewrite the expectation as
\begin{eqnarray}
\mathbb{E}[\widetilde{\L}_1]&=& \sum_{m\geq 1}m\cdot \mathbb{P}[\widetilde{\L}_1=m] =
\sum_{m\geq 1}\sum_{n\geq m} m\cdot \mathbb{P}[\widetilde{\L}_1=m,T_1-T_0=n]\nonumber \\
&=& \sum_{n\geq 1}\underbrace{\sum_{m=1}^n m\cdot  \mathbb{P}[\widetilde{\L}_1=m,T_1-T_0=n]}_{a_n=:}\cdot z^n\Biggl|_{z=1}. \nonumber 
\end{eqnarray}
For real $z>0$, we have
$$
\sum_{n\geq 1} a_n\cdot z^n \leq z\cdot \frac{\partial}{\partial z}\biggl[\sum_{n\geq 1}\mathbb{P}[T_1-T_0=n]\cdot z^n\biggr],
$$
which yields together with Proposition \ref{lem:increment-bound}  that the power series $\sum_{n\geq 1}a_n\cdot z^n$ has radius of convergence strictly bigger than $1$.
 According to the remarks at the beginning of this section it suffices to show that the probabilities $\mathbb{P}[\widetilde{\L}_1=m,T_1-T_0=n], m,n\in\N$ can be written in the form of (\ref{equ:path-prob}). To this end, we introduce further notation: for $w\in V_{g_0}, n,m\in\N$, denote by  $\mathcal{P}^{(3)}_{w,m,n}$ the set of paths $(w,y_1,\dots,y_n)\in V^{n+1}$ of length $n$ such that
\[
\bigcup_{t\in\N} \bigl[X_t=w,X_{t+1}=y_1,\dots,X_{t+n}=y_n\bigr] \cap \bigl[X_{t}=w,T_0=t,T_1-T_0=n,\widetilde{\L}_1=m\bigr]\neq \emptyset.
\]
Then we get the required form of $\mathbb{P}[\widetilde{\L}_1=m,T_1-T_0=n]$ by decomposition according to the values of $T_0$, $X_{T_0}$ and $X_{T_1}$:
\begin{eqnarray*}
&&\P[\widetilde{\L}_1=m,T_1-T_0=n] \\
&=& \sum_{w\in V_{g_0}}\sum_{m_0\in\N} \sum_{y\in V^{(2)}_{g_0}} \mathbb{P}\bigl[X_{m_0}=w,T_0=m_0,X_{T_1}=wy,\widetilde{\L}_1=m,T_1-T_0=n \bigr]\\
&=& \sum_{w\in V_{g_0}} \sum_{m_0\in\mathbb{N}} \P\bigl[X_{m_0}=w,\forall m'<m_0: X_{m'}\notin C(w)\bigr] \\
&&\quad \cdot  \sum_{(w,y_1,\dots,y_{n})\in \mathcal{P}^{(3)}_{w,m,n}} \P_{w}\bigl[X_1=y_1,\dots,X_{n}=y_{n}\bigr] \cdot (1-\xi_1)\\
&=&  \sum_{(g_0,y_1,\dots,y_{n})\in \mathcal{P}^{(3)}_{g_0,m,n}} \P_{g_0}\bigl[X_1=y_1,\dots,X_{n}=y_{n}\bigr].  
\end{eqnarray*}
In the last equation we used (\ref{equ:T0-finite}) 
and the measure preserving shift of paths in $\mathcal{P}^{(3)}_{w,m,n}$ to paths in $\mathcal{P}^{(3)}_{g_0,m,n}$ by the transformation $V^{(2)}_{g_0}\ni y_i\mapsto g_0(w^{-1}y_i)$. 
   This finishes the proof.
\end{proof}

\begin{proof}[Proof of Theorem \ref{thm:analyticity}]
The proof follows now directly from Lemmas \ref{lem:T1-T0-analytic} and \ref{lem:EL-analytic} in view of formula (\ref{equ:T-formula}).
\end{proof}

\section{Remarks}

\label{sec:remarks}

In Section \ref{subsec:free-products} we excluded the case $p_i(x,x)>0$ for $x\in V_i$, $i\in\calI$. However, this was just a technical assumption in order to simplify the proofs for sake of better readability. We now explain the modifications needed if we allow $p_i(x,x)>0$ for some $x\in V_i$, $i\in\calI$.
\par
For $n\in\N$, we denote by $\mathfrak{i}_n$ the index $j\in\{1,2\}$ such that the $n$-th step of $(X_n)_{n\in\N_0}$ is performed according to $\overline{P}_j$. For $i\in\calI$, define the stopping time 
$$
\mathbf{s}^{(i)}:=\inf\{m\in\N \mid \mathfrak{i}_m=i, X_{m-1},X_m\in V_i\}. 
$$
For $z\in\N$, set
$$
\zeta_i(z):=\sum_{n\geq 1} \P\bigl[\mathbf{s}^{(i)}=n\bigr]\cdot z^n.
$$
Here, $\P\bigl[\mathbf{s}^{(i)}=n\bigr]$ is the probability that a step within $V_i\subset V$ according
to $\overline{P}_i$ is performed for the first time at time $n$. Recall also the definition
$$
\xi_i:=\sum_{n\geq 1} \P[X_n\in V_i^\times,\forall m<n: X_n\notin V_i^\times],
$$
the probability of visiting any element in $V_i^\times$ after finite time when starting at $o$.  Observe that $\xi_i=\zeta_i(1)$ if $p_j(x,x)=0$ for all $j\in \calI$ and all $x\in V_j$.
\begin{Cor}
For all $i\in\calI$, we have $\xi_i<1$ and $\zeta_i<1$. Moreover, $\zeta_i(z)$, $i\in\calI$, has radius of convergence strictly bigger than $1$.
\end{Cor}
\begin{proof}
In \cite[Lemma 2.3]{gilch:07} it is shown that  $\zeta_i<1$, $\xi_i<1$ respectively, holds for all $i\in\calI$ under the assumption that $p_j(x,x)=0$ for all $j\in \calI$ and all $x\in V_j$. However, the proof works completely analogously without that assumption. Thus, $\zeta_i<1$ and $\xi_i<1$ hold also in the setting where we allow $p_i(x,x)>0$ for $x\in V_i$, $i\in\calI$. Furthermore, $\zeta(z)$ has radius of convergence strictly bigger than $1$, since $G(o,o|z)$ has radius of convergence strictly bigger than $1$ and due to the inequality 
$$
\zeta(z) \leq G(o,o|z)\cdot \alpha_i\cdot z.
$$
\end{proof}
The crucial point is now that Equation (\ref{equ:L-Li}) becomes -- in the general setting --
$$
L(x_i,y_i|z)=L_i\bigl(x_i,y_i\,\bigl|\, \zeta_i(z)\bigr), \quad i\in\calI, x_i,y_i\in V_i.
$$
The proofs of the main results work completely analogously, but one has to replace the functions $\xi_i(z)$ by $\zeta_i(z)$ at some places. However, this does \textit{not} require further additional reasoning, since the involved arguments (e.g., radii of convergence of $\xi_i(z), \zeta_i(z)$ are strictly bigger than $1$) still remain the same. Due to possible nasty case distinctions whether a loop step from any $x\in V$ to $x$ is performed as a step in $V_{\t(x)}$ or not, we excluded loops.

\bibliographystyle{abbrv}
\bibliography{literatur}

\end{document}